%
%
%


\documentclass{amsproc}

\usepackage{amssymb}

\usepackage{graphicx}


\usepackage{pstricks,textcomp}


\newtheorem{theorem}{Theorem}[section]
\newtheorem{lemma}[theorem]{Lemma}
\newtheorem{proposition}[theorem]{Proposition} 

\theoremstyle{definition}
\newtheorem{definition}[theorem]{Definition}

\theoremstyle{remark}
\newtheorem{remark}[theorem]{Remark}

\numberwithin{equation}{section}

\def\hfl#1{\smash{\mathop{\hbox to 10mm{\rightarrowfill}}\limits^{\textstyle
#1}}}
\newcommand{\KK}{\mathbb{K}} 
\newcommand{\CA}{{\mathcal A}} 
\newcommand{\TCM}{\tilde{C}_2(M)}
\newcommand{\TCMD}{\widetilde{M^2}} 
\newcommand{\CE}{{\mathcal E}} 
\newcommand{\CF}{{\mathcal F}} 
\newcommand{\CM}{{\mathcal M}}
\newcommand{\CV}{{\mathcal V}} 
\newcommand{\CQ}{{\mathcal{Q}}}
\newcommand{\CS}{{\mathcal S}}
\newcommand{\fvar}{x}
\newcommand{\svar}{y}
\newcommand{\tvar}{z}
\newcommand{\fvarM}{X}
\newcommand{\svarM}{Y}
\newcommand{\tvarM}{Z}
\newcommand{\qvarM}{W}
\newcommand{\cvarM}{V}
\newcommand{\funcM}{f_M}
\newcommand{\dfuncM}{g_M}
\newcommand{\CfuncM}{C_M}
\newcommand{\projM}{p_M}
\newcommand{\proj}{p}
\newcommand{\projq}{q_S}
\newcommand{\Bor}{B}
\newcommand{\ID}{I_{\Delta}}
\newcommand{\ZZ}{\mathbb{Z}} 
\newcommand{\RR}{\mathbb{R}} 
\newcommand{\QQ}{\mathbb{Q}} 
\newcommand{\CC}{\mathbb{C}} 
\newcommand{\NN}{\mathbb{N}}

\begin{document}

\title[Invariants derived from the equivariant linking pairing]{Invariants of knots and $3$--manifolds \\derived from the equivariant linking pairing}


\author{Christine Lescop}
\address{Institut Fourier, BP 74, 38402 Saint-Martin d'H\`eres cedex, France}
\email{lescop@ujf-grenoble.fr}
\thanks{Institut Fourier, CNRS, UJF Grenoble}

\subjclass[2000]{57M27 57N10 57M25 55R80 }

\begin{abstract}
Let $M$ be a closed oriented $3$-manifold with first Betti number one.
Its equivariant linking pairing may be seen as a two-dimensional cohomology class in an appropriate infinite cyclic covering of the configuration space of ordered pairs of distinct points of $M$. We show how to define
the equivariant cube $\CQ(M,\KK)$ of this Blanchfield pairing with respect to a framed knot $\KK$ that generates $H_1(M;\ZZ)/\mbox{Torsion}$.

We present the invariant $\CQ(M,\KK)$ and some of its properties including a surgery formula.

Via surgery, the invariant $\CQ$ is equivalent to an invariant $\hat{\CQ}$ of null-homologous knots in rational homology spheres,
that is conjecturally equivalent to the two-loop part of the Kontsevich integral.

We generalize the construction of $\hat{\CQ}$
to obtain a topological construction for an invariant that is conjecturally equivalent to the whole Kricker rational lift of the Kontsevich integral for null-homologous knots in rational homology spheres.
\end{abstract}

\maketitle

\section{Introduction}
\label{secintro}

\subsection{Background}
The study of $3$--manifold invariants built from
integrals over configuration spaces started
after the work of Witten on Chern-Simons theory in 1989 \cite{wit},
with work of Axelrod, Singer~\cite{as1,as2}, Kontsevich~\cite{ko}, Bott, Cattaneo~\cite{BC,bc2,cat}, Taubes~\cite{taubes}.
In 1999, in \cite{kt}, G. Kuperberg and D. Thurston announced that
some of these invariants, the Kontsevich ones,
fit in with the framework of finite type invariants of homology spheres
studied by Ohtsuki, Le, J. and H. Murakami, Goussarov, Habiro, Rozansky, Garoufalidis, Polyak, Bar-Natan \cite{oht,ggp,lmo,hab,Aa1,Aa2,Aa3} and others.
They showed that these invariants together define a
universal finite type invariant for homology 3-spheres. 
I gave specifications on the Kuperberg-Thurston work in \cite{lesconst} and generalisations in \cite{sumgen}.

Similar studies for the knots and links cases had been performed by many other authors including Guadagnini, Martellini, Mintchev~\cite{gua}, Bar-Natan~\cite{bn'}, Kontsevich~\cite{Kon}, Polyak, Viro~\cite{pv}, Bott, Taubes~\cite{bt}, Altsch\"uler, Freidel~\cite{af}, D. Thurston~\cite{th}, Poirier~\cite{Po}. See also the Labastida survey \cite{La} and the references therein.

The above mentioned Kuperberg-Thurston work shows how to write the Casson invariant $\lambda$, originally defined by Casson in 1984 as an algebraic number
of conjugacy classes of irreducible $SU(2)$-representations \cite{akmc,gm,mar}, as
$$\lambda(N) = \frac{1}{6} \int_{\left(N\setminus \{\infty\}\right)^2\setminus \mbox{diagonal}} \omega^3$$
for a homology sphere $N$ (a closed oriented $3$-manifold with the same integral homology as $S^3$), a point $\infty$ in $N$,
and a closed $2$-form $\omega$ such that
for any 2-component link 
$$J \sqcup L \colon S^1 \sqcup S^1 \rightarrow N \setminus \{\infty\},$$the linking number of $J$ and $L$ reads
$$ lk(J,L)=\int_{J\times L}\omega.$$

In this sense, $6\lambda(N)$ may be viewed as the {\em cube of the linking form of $N$.} It can also be expressed as the  algebraic triple intersection $\langle F_X,F_Y,F_Z \rangle$
of three codimension $2$ cycles $F_X$, $F_Y$, $F_Z$ of $(C_2(N), \partial C_2(N))$ (Poincar\'e dual to the previous $\omega$)
for a compactification $C_2(N)$ of $\left(\left(N\setminus \{\infty\}\right)^2\setminus \mbox{diagonal}\right)$ that is a $6$--manifold with boundary.
Here, for any 2-component link $(J,L)$ of $(N\setminus \{\infty\})$
as above, the linking number of $J$ and $L$ is the algebraic intersection of $J\times L$ and $F_X$, (or $F_Y$ or $F_Z$) in the compactification $C_2(N)$.
A complete definition of $\lambda$ in these terms is described in the appendix.

\subsection{Introduction to the results}
\label{subint}

In the first part of this article, we shall present a similar construction for an
{\em equivariant cube ${\mathcal Q}(M,\KK)$ of the equivariant linking pairing} for a closed $3$--manifold $M$ with $H_1(M;\QQ)=\QQ$,
with respect to a framed knot $\KK=(K,K_{\parallel})$, that is a knot $K$ equipped with a parallel $K_{\parallel}$, such that $H_1(M;\ZZ)/\mbox{Torsion}=\ZZ [K]$.

Our invariant will live in the field of rational functions $\QQ(x,y)$.
The simplest example of a pair $(M,\KK)$ as above is the pair
$(S^1 \times S^2,S^1 \times u)$ where $S^1 \times u$ is equipped with a parallel. Note that the choice of the parallel does not affect the diffeomorphism class of the pair $(M,\KK)$ in this case. We shall have
$${\mathcal Q}(S^1 \times S^2,S^1 \times u)=0.$$
Furthermore, if $N$ is a rational homology sphere, and if $\sharp$ stands for the connected sum,
$${\mathcal Q}(M \sharp N,\KK)={\mathcal Q}(M,\KK) + 6 \lambda(N)$$
where $\lambda$ is the Walker generalization of the Casson invariant
normalized like the Casson invariant in \cite{akmc,gm,mar}. If $\lambda_W$ denotes the Walker invariant normalized as in \cite{wal}, then $\lambda=\frac{\lambda_W}{2}$.

We shall also state a surgery formula in Proposition~\ref{propsur} for our invariant, and we shall
determine the vector space spanned by the differences
$({\mathcal Q}(M,\KK^{\prime})-{\mathcal Q}(M,\KK))$ for other framed knots $\KK^{\prime}$ whose homology classes generate $H_1(M;\ZZ)/\mbox{Torsion}$, in Proposition~\ref{propfrakcha}.
This determination will allow us to define an induced invariant
for closed oriented $3$-manifolds with first Betti number one.
This latter invariant should be equivalent
to a special case (the two-loop case) of invariants combinatorially defined by Ohtsuki in 2008, in \cite{ohtb},
for $3$-manifolds of rank one.

Let $M_{\KK}$ be the manifold obtained from $M$ by surgery on $\KK$: This manifold is obtained from $M$ by replacing a tubular neighborhood of $K$ by another solid torus $N(\hat{K})$ whose meridian is the given parallel $K_{\parallel}$ of $K$. It is a  rational homology sphere and the core $\hat{K}$ of the new torus $N(\hat{K})$ is a null-homologous knot in $M_{\KK}$. 

Our data $(M,\KK)$ are equivalent to the data $(M_{\KK},\hat{K})$. Indeed, $M$ is obtained from $M_{\KK}$ by $0$-surgery on $\hat{K}$.
Hence our invariant can be seen as an invariant of null-homologous knots in rational homology spheres.
For these (and even for boundary links in rational homology spheres),
following conjectures of Rozansky \cite{Ro1}, Garoufalidis and Kricker defined a rational lift of the Kontsevich integral in \cite{Kr,GK}, that generalizes the Rozansky $2$--loop invariant of knots in $S^3$ of \cite[Section 6, 6.9]{Ro1}.
The two-loop part of this Kricker lift for knots is often called the two-loop polynomial. Its history and many of its properties
are described in \cite{oht2}.
Our invariant shares many features with this two-loop polynomial and is certainly {\em equivalent\/} to this invariant, in the sense that if one of the invariants distinguishes two knots with equivalent equivariant linking pairing, then the other one does. It could even be equal to the two-loop polynomial.

In 2005, Julien March\'e also proposed a similar ``cubic'' definition of an invariant equivalent to the two-loop polynomial \cite{Ju}.

In terms of Jacobi diagrams or Feynman graphs, the Casson invariant 
was associated with the graph $\theta$ and
our equivariant cube is associated with the graph $\theta$ with hair or beads.

All the results of the first part of this article are proved in \cite{betaone}. 

In the second part of this article, we explain how the topological construction of ${\mathcal Q}(M,\KK)$ generalizes to the construction of an invariant of $(M,\KK)$ that should be equivalent to the Kricker rational lift of the Kontsevich integral of null-homologous knots in rational homology spheres.

This article is an expansion of the talk I gave at the conference {\em Chern-Simons Gauge theory: 20 years after, Hausdorff center for Mathematics\/} in Bonn in August 2009. I thank the organizers Joergen Andersen, Hans Boden, Atle Hahn and Benjamin Himpel of this great conference. 

The first part of the article and the appendix are of expository nature and do not contain all the proofs; that first part may be considered as a research announcement for the results of \cite{betaone}. The second part relies on some results of the first part and contains the construction of a more powerful invariant of $(M,\KK)$ with the proof of its invariance.

I started to work on this project after a talk of Tomotada Ohtsuki for
a workshop at the CTQM in {\AA}rhus in Spring 2008. I thank Joergen Andersen and Bob Penner for organizing this very stimulating meeting, and Tomotada Ohtsuki for discussing this topic with me. Last but not least, I thank the referee for preventing the invariants constructed in the second part from living in a far less interesting space.

\subsection{Conventions}

All the manifolds considered in this article are oriented.
Boundaries are oriented by the {\em outward normal first\/} convention. The fiber $N_u(A)$ of the normal bundle $N(A)$ of a submanifold $A$ in a manifold $C$ at $u \in A$ is oriented so that $T_uC=N_u(A)\oplus T_uA$ as oriented vector spaces.
For two transverse submanifolds $A$ and $B$ of $C$, $A \cap B$ is oriented so that $N_u(A \cap B)=N_u(A)\oplus N_u(B)$. 
When the sum of the dimensions of $A$ and $B$ is the dimension of $C$, and when $A \cap B$ is finite, the {\em algebraic intersection\/} $\langle A, B \rangle$ of $A$ and $B$ in $C$ is the sum of the signs of the points of $A\cap B$, where the sign of an intersection point $u$ of $A \cap B$ is $1$ if and only if
$T_uC=N_u(A)\oplus N_u(B)$ (that is if and only if $T_uC=T_uA\oplus T_uB$) as oriented vector spaces. It is $(-1)$ otherwise. The algebraic intersection of $n$ compact transverse submanifolds  $A_{1}$,  $A_{2}$ ...,  $A_{n}$ of $C$ whose codimensions sum is the dimension of $C$ is defined similarly. The sign of an intersection point $u$ is $1$ if and only if
$T_uC=N_u(A_1)\oplus N_u(A_2)\oplus \dots \oplus N_u(A_n)$ as oriented vector spaces.

\subsection{On the equivariant linking pairing}

Fix $(M,\KK)$ as in Subsection~\ref{subint}.
Let $$\projM \colon \tilde{M} \rightarrow M$$ denote the regular infinite cyclic covering of $M$, and let $\theta_M$ be the generator of its covering group that corresponds to the action of the class of $K$.
The action of $\theta_{M}$ on $H_1(\tilde{M};\QQ)$ is denoted as the multiplication by $t_{M}$.\\
The $\QQ[t_{M}^{\pm 1}]$-module $H_1(\tilde{M};\QQ)$ reads
$$H_1(\tilde{M};\QQ)=\bigoplus_{i=1}^k \frac{\QQ[t_{M}^{\pm 1}]}{\delta_i}$$
for polynomials $\delta_i$ of $\QQ[t_{M}^{\pm 1}]$ where $\delta_i$ divides $\delta_{i+1}$.
Then
$\delta=\delta(M)=\delta_k$ is the
{\em annihilator} of $H_1(\tilde{M};\QQ)$ and
$\Delta=\Delta(M)=\prod_{i=1}^k\delta_i$ is the {\em Alexander polynomial\/} of $M$.\\
These very classical invariants are normalised so that  $\Delta(t_M)=\Delta(t_M^{-1})$, $\Delta(1)=1$,
 $\delta(t_M)=\delta(t_M^{-1})$ and $\delta(1)=1$. (In order to make $\delta$ symmetric, we may have to allow it to belong to $\left((t_M^{1/2}+t_M^{-1/2})\QQ[t_{M}^{\pm 1}] \cup \QQ[t_{M}^{\pm 1}]\right)$.)
Note that $\Delta$ and $\delta$ coincide when $\Delta$ has no multiple roots.

Let $(J,L)$ be a two-component link of $\tilde{M}$ such that
$\projM(J) \cap \projM(L)=\emptyset$. If $J$ bounds a (compact) surface $\Sigma$
in $\tilde{M}$ transverse to $\theta_M^{n}(L)$ for all $n$, define the {\em equivariant intersection\/} $\langle \Sigma, L \rangle_e$ as
$$\langle \Sigma, L \rangle_e = \sum_{n \in \ZZ} t_M^n \langle \Sigma, \theta_M^{n}(L) \rangle$$
where $\langle \Sigma, \theta_M^{n}(L) \rangle$ is the usual algebraic intersection.
Then the {\em equivariant linking pairing\/} of $J$ and $L$ is 
$$lk_e(J,L)= \langle \Sigma, L \rangle_e.$$
In general, $\delta(\theta_M)(J)$ bounds a surface $\delta(\theta_M)\Sigma$ and
$$lk_e(J,L)= \frac{\langle \delta(\theta_M)\Sigma, L \rangle_e}{\delta(t_M)}.$$
For any two one-variable polynomials $P$ and $Q$,
$$lk_e(P(\theta_M)(J),Q(\theta_M)(L))=P(t_M)Q(t_M^{-1})lk_e(J,L).$$

\subsection{The construction of $\CQ(M,\KK)$}
\label{subconst}

Consider the infinite cyclic covering of $M^2$
 $$\widetilde{M^2} = \frac{\tilde{M}^2}{(u,v)\sim (\theta_M(u),\theta_M(v))} \;\;\;\;\hfl{\proj} \;\;\;\; M^2$$
 with generating covering transformation $\theta$.
$$\theta(\overline{(u,v)})=\overline{(\theta_M(u),v)}=\overline{(u,\theta_M^{-1}(v))}.$$
The diagonal of $\left(\tilde{M}^2 \right)$ projects to a preferred lift of the diagonal of $M^2$ in $\TCMD$.
$$\proj^{-1}(\mbox{diag}(M^2))=\sqcup_{n\in \ZZ} \theta^n\left(\overline{\mbox{diag}(\tilde{M}^2)}=\mbox{diag}(M^2)\right)=\ZZ \times \mbox{diag}(M^2).$$

\begin{definition}
\label{defblowup}
In a smooth $n$-manifold $C$, a tubular neighborhood of a compact $m$-submanifold $A$ locally reads as $\RR^{n-m} \times U$ for some open $U \subset A$, where $\RR^{n-m}=(]0,\infty[ \times S^{n-m-1}) \cup \{0\}$ stands for the fiber of the normal bundle of $A$, and $S^{n-m-1}$ stands for the fiber of the unit normal bundle of $A$.
In this article, the manifold $C(A)$ obtained by {\em blowing up\/} $A$ in $C$ is obtained by replacing $A$ by its unit normal bundle in $A$.
Near $U$, $\RR^{n-m} \times U$ is replaced by $[0,\infty[ \times S^{n-m-1} \times U$. The blown-up manifold $C(A)$ is homeomorphic to
the complement of an open tubular neighborhood of $A$ in $C$, but it has a canonical smooth projection onto $C$ and a canonical smooth structure. When $C$ and $A$ are compact, $C(A)$ is a compactification of $C \setminus A$.
\end{definition}

The {\em configuration space\/} ${C}_2(M)$ is obtained from $M^2$ by blowing up the diagonal of $M^2$ in this sense, and
the configuration space $\tilde{C}_2(M)$ is obtained from $\widetilde{M^2}$ by blowing up $\proj^{-1}(\mbox{diag}(M^2))$. The transformation $\theta$ of $\widetilde{M^2}$ naturally lifts to a transformation of $\tilde{C}_2(M)$ that is still denoted by $\theta$.
The quotient of $\tilde{C}_2(M)$ by the action of $\theta$ is ${C}_2(M)$.

Since the normal bundle of the diagonal of $M^2$ is canonically equivalent to the tangent bundle $TM$ of $M$ via $$(u,v) \in \frac{TM^2}{\mbox{diag}(TM^2)} \mapsto (v-u) \in TM,$$
the unit normal bundle of $\proj^{-1}(\mbox{diag}(M^2))$ is $\ZZ \times ST(M)$ where $ST(M)$ is the unit tangent bundle of $M$ so that
$$\partial \tilde{C}_2(M)=\ZZ \times ST(M).$$
A trivialisation $\tau: TM \rightarrow M \times \RR^3$ of $TM$ identifies $ST(M)$ with $M \times S^2$.

Like any oriented closed $3$-manifold, $M$ bounds an (oriented compact) manifold $W^4$ with signature $0$. Then $TW^4_{|M}=\RR \oplus TM$.
A trivialisation $\tau$ of $TM$ induces a trivialisation of $TW^4 \otimes \CC$ on $M$. 
The {\em first Pontrjagin class\/} $p_1(\tau)$ of such a trivialisation $\tau$ of the tangent bundle of $M$ is the obstruction $p_1(W^4;\tau)$ to extend this trivialisation to $W^4$. It belongs to $H^4(W^4,M;\pi_3(SU(4)))= \ZZ$. We use the notation and conventions of \cite{milnorsta}, see also \cite[Section 1.5]{lesconst}.

Now, the construction of $\CQ(M,\KK)$ is given by the following theorem.
\begin{theorem}
\label{thminvtripl}
Let $\tau: TM \rightarrow M \times \RR^3$ be a trivialisation of $TM$
and let $p_1(\tau)$ be its first Pontrjagin class.
Assume that $\tau$ maps the oriented unit tangent vectors of $K$ to some fixed
$\qvarM \in S^2$. Then $\tau$ induces a parallelisation of $K$. Let $K_{\fvarM}$, $K_{\svarM}$, $K_{\tvarM}$ be three disjoint parallels of $K$, on the boundary $\partial N(K)$ of a tubular neighborhood of $K$, that induce the same parallelisation of $K$ as $\tau$.

Consider the continuous map
$$\begin{array}{llll}\check{A}(K) \colon &(S^1=[0,1]/(0\sim 1)) \times [0,1] &\rightarrow & C_2(M)\\
&(t,u \in]0,1[)& \mapsto &(K(t),K(t+u)),
\end{array}$$
and its lift ${A}(K) \colon S^1 \times [0,1] \rightarrow  \TCM$
such that the lift of $(K(t),K(t+\varepsilon))$ is in a small neighborhood of the canonical lift of the diagonal, for a small positive $\varepsilon$. Let $A(K)$ also denote the $2$--chain $A(K)(S^1 \times [0,1])$.

For $\cvarM \in S^2$,
let $$s_{\tau}(M;\cvarM)=\tau^{-1}(M \times \cvarM) \subset \left(ST(M)= \{0\} \times ST(M)\right) \subset \partial \TCM.$$

Let
 $$I_{\Delta}(t)=\frac{1+t}{1-t} +  \frac{t\Delta^{\prime}(t)}{\Delta(t)}$$
where $\Delta=\Delta(M)$.
Let $\fvarM$, $\svarM$, $\tvarM$ be three distinct points in $S^2 \setminus\{\qvarM, -\qvarM\}$.

There exist three rational transverse\/ $4$--dimensional chains $G_{\fvarM}$, $G_{\svarM}$ and $G_{\tvarM}$ of $\TCM$ whose boundaries
are 
$$\partial G_{\fvarM} =(\theta -1)\delta(\theta)\left(s_{\tau}(M;\fvarM)-\ID(\theta) ST(M)_{|K_{\fvarM}}\right) ,$$ 
$$\partial G_{\svarM} =(\theta -1)\delta(\theta)\left(s_{\tau}(M;\svarM)-\ID(\theta) ST(M)_{|K_{\svarM}}\right) \mbox{and}$$ $$\partial G_{\tvarM} =(\theta -1)\delta(\theta)\left(s_{\tau}(M;\tvarM)-\ID(\theta) ST(M)_{|K_{\tvarM}}\right)$$
and such that the following equivariant algebraic intersections in $\TCM$ vanish
$$\langle G_{\fvarM}, A(K)\rangle_e=\langle G_{\svarM}, A(K)\rangle_e=\langle G_{\tvarM}, A(K)\rangle_e=0.$$
Define the equivariant algebraic triple intersection in $\TCM$
$$\langle G_{\fvarM},G_{\svarM},G_{\tvarM} \rangle_e=\sum_{(i,j)\in \ZZ^2} \langle G_{\fvarM},\theta^{-i}(G_{\svarM}),\theta^{-j}G_{\tvarM})\rangle_{\TCM} \svar^i\tvar^j \in \QQ[\svar^{\pm 1},\tvar^{\pm 1}].$$
 Let $R_{\delta}= \frac{\QQ[\fvar^{\pm 1},\svar^{\pm 1}, \tvar^{\pm 1},\frac{1}{\delta(\fvar)},\frac{1}{\delta(\svar)},\frac{1}{\delta(\tvar)}]}{(\fvar \svar \tvar=1)}.$
Then $$\CQ(M,\KK)=\frac{\langle G_{\fvarM},G_{\svarM},G_{\tvarM} \rangle_e}{(\fvar -1)(\svar -1)(\tvar -1)\delta(\fvar)\delta(\svar)\delta(\tvar)} - \frac{p_1(\tau)}{4} \in R_{\delta}$$ and $\CQ(M,\KK)$ only depends on the isotopy class of the knot $K$ and on its parallelisation.
Furthermore,
$$\CQ(M,\KK)(\fvar,\svar,\tvar)=\CQ(M,\KK)(\svar,\fvar,\tvar)=\CQ(M,\KK)(\tvar,\svar,\fvar)=\CQ(M,\KK)(\fvar^{-1},\svar^{-1},\tvar^{-1})$$ and $\CQ(M,\KK)$ does not depend on the orientation of $K$.
\end{theorem}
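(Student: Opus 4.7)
The overall plan is to construct the chains $G_\fvarM$, $G_\svarM$, $G_\tvarM$ as equivariant ``propagators'' for the linking pairing --- that is, as Poincar\'e duals of equivariant closed $2$-forms on $\TCM$ whose restriction to $J\times L$ computes $lk_e(J,L)$ --- and then to show that their equivariant triple intersection, renormalised by the boundary prefactors and by $-p_1(\tau)/4$, is independent of all the choices. This is the equivariant analogue of the $6\lambda(N)=\langle F_\fvarM,F_\svarM,F_\tvarM\rangle$ description of the Casson--Walker invariant recalled in the introduction.

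The first and most delicate step is the existence of a rational $4$-chain $G_\fvarM$ with the prescribed boundary. I would analyse the equivariant homology of $\TCM$ using the long exact sequence of $(\TCM,\partial\TCM)$ combined with the Leray spectral sequence for $\proj\colon\TCMD\to M^2$, and show that the $3$-cycle $(\theta-1)\delta(\theta)(s_\tau(M;\fvarM)-\ID(\theta) ST(M)_{|K_\fvarM})$ is null-homologous in $\TCM$. The $\delta(\theta)$ factor annihilates the $\QQ[t_M^{\pm1}]$-torsion of $H_1(\tilde M;\QQ)$; the $(\theta-1)$ factor kills the contribution of the invariant summand $\QQ$; and the particular polynomial $\ID(t)=(1+t)/(1-t)+t\Delta'(t)/\Delta(t)$ is forced by matching the self-linking of the parallel $K_\fvarM$ (whose $(1+t)/(1-t)$ summand comes from a local model near the canonical lift of the diagonal) against the equivariant propagator, the $t\Delta'/\Delta$ summand being exactly the logarithmic derivative appearing in an equivariant Reidemeister-torsion computation. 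Once a bounding chain is produced, one subtracts a closed $4$-chain supported near $A(K)$ to enforce $\langle G_\fvarM,A(K)\rangle_e=0$; the remaining indeterminacy is a closed $4$-cycle equivariantly orthogonal to $A(K)$.

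Well-definedness of $\CQ(M,\KK)$ in $R_\delta$ then follows from a Leibniz-type rule for equivariant triple intersections: replacing $G_\fvarM$ by a closed cycle $C$ with $\langle C,A(K)\rangle_e=0$ changes $\langle C,G_\svarM,G_\tvarM\rangle_e$ by a Laurent polynomial visibly divisible by $(\svar-1)\delta(\svar)(\tvar-1)\delta(\tvar)$, so $\CQ(M,\KK)$ is unchanged modulo the relation $\fvar\svar\tvar=1$, which itself is the residual invariance under diagonal $\theta$-shifts. Independence from $\tau$ is the raison d'\^etre of the $-p_1(\tau)/4$ correction: a deformation of $\tau$ changes $s_\tau(M;\cvarM)$ by a controlled cobordism whose equivariant triple contribution is computed by a Chern--Simons type anomaly equal to $\frac{1}{4}$ times the variation of $p_1$, exactly as in the non-equivariant Casson case described in the appendix and in \cite{lesconst}. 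Isotopy invariance of $K$ and independence of the three parallels follow from the equivariance of the whole construction under ambient isotopy.

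Finally, the symmetries come from natural involutions. Transposing two of the codimension-$2$ chains $G$ in the $6$-manifold $\TCM$ preserves every transverse-intersection sign since $(-1)^{2\cdot2}=+1$; after tracking the induced change of indices in $\sum_{i,j}\langle G_\fvarM,\theta^{-i}G_\svarM,\theta^{-j}G_\tvarM\rangle\svar^i\tvar^j$ modulo $\fvar\svar\tvar=1$, this yields the symmetries $\CQ(\fvar,\svar,\tvar)=\CQ(\svar,\fvar,\tvar)=\CQ(\tvar,\svar,\fvar)$. The inversion $\CQ(\fvar^{-1},\svar^{-1},\tvar^{-1})$ comes from the swap involution $\sigma\colon\overline{(u,v)}\mapsto\overline{(v,u)}$ on $\TCMD$, which conjugates $\theta$ into $\theta^{-1}$ and reverses the orientation of $\TCM$: $\sigma$ sends each $G_\cvarM$ to a chain with the same equivariant triple data after an antipodal shift on $S^2$ that is killed by the normalisation. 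Independence from the orientation of $K$ is then deduced by combining these symmetries with the effect of reversing $K$, which negates $\qvarM$ and conjugates $\theta_M$. The main obstacle, I expect, is the first step: pinning down the precise polynomial $\ID$ so that the prescribed boundary combination becomes null-homologous requires a careful equivariant Reidemeister-torsion calculation, which is the technical heart of the theorem.
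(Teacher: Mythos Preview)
Your overall strategy matches the paper's sketch (the full proof is deferred to \cite{betaone}): compute the relevant equivariant homology of $\TCM$, show that the prescribed $3$-cycle bounds, normalise with $A(K)$, and handle the $\tau$-dependence via the $p_1$-anomaly. Two points, however, are not quite right and differ from the paper's actual mechanism.

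\textbf{Well-definedness.} Your Leibniz argument---that replacing $G_\fvarM$ by $G_\fvarM+C$ with $C$ closed and $\langle C,A(K)\rangle_e=0$ changes the triple intersection by something ``visibly divisible by $(\svar-1)\delta(\svar)(\tvar-1)\delta(\tvar)$''---is not how invariance is obtained, and divisibility alone would not suffice (the quotient would still be nonzero in $R_\delta$). The paper's argument is sharper and simpler: by Lemma~\ref{lemhomTCM} one has $H_4(C_2(M);\QQ(t))=\QQ(t)[ST(M)_{|S}]$, and $A(K)$ pairs nontrivially with this generator since $\langle A(K),ST(M)_{|S}\rangle_e=1-t^{-1}$. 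Hence any closed $C$ with $\langle C,A(K)\rangle_e=0$ is null-homologous, so $C=\partial D$ and the triple intersection $\langle C,G_\svarM,G_\tvarM\rangle_e$ vanishes \emph{exactly}, the boundaries $\partial G_\svarM$, $\partial G_\tvarM$ being disjoint in $\partial\TCM$ (they sit over distinct points of $S^2$). This is the step you should fix.

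\textbf{The coefficient $I_\Delta$.} The paper does not derive $I_\Delta$ from a self-linking or Reidemeister-torsion computation; rather it states as a lemma that the class of any global section $s_\tau(M;\cvarM)$ in $H_3(C_2(M);\QQ(t))=\QQ(t)[ST(M)_{|K}]$ is precisely $I_\Delta\cdot[ST(M)_{|K}]$. That is the concrete statement you would need to prove, and it directly forces the $I_\Delta(\theta)ST(M)_{|K_\cvarM}$ term in $\partial G_\cvarM$. Your heuristic may well lead to the same computation, but as written it is vague, and the $(1+t)/(1-t)$ piece is not a ``local model near the diagonal'' contribution; it emerges from the global homology calculation.

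The symmetry and $\tau$-independence portions of your plan are in line with the paper.
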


Of course, the theorem above contains a lot of statements.
Let us explain their flavour.
Consider the homology of $\TCM$ with coefficients in $\QQ$
endowed with the structure of $\QQ[t,t^{-1}]$-module where the multiplication by $t$ is induced by the action of $\theta$ on $\TCM$. 
Let $\QQ(t)$ be the field of fractions of $\QQ[t,t^{-1}]$ and set
$$H_{\ast}(C_2(M);\QQ(t)) = H_{\ast}(\TCM;\QQ) \otimes_{\QQ[t,t^{-1}]} \QQ(t).$$

\begin{lemma}
\label{lemhomTCM}
$H_{i}(C_2(M);\QQ(t))\cong H_{i-2}(M;\QQ) \otimes_{\QQ} \QQ(t)$
for any $i \in \ZZ$.
$$H_{2}(C_2(M);\QQ(t))=\QQ(t)[ST(M)_{|\ast} (\cong \ast \times S^2)]$$
$$H_{3}(C_2(M);\QQ(t))=\QQ(t)[ST(M)_{|K} (\cong K \times S^2)]$$
$$H_{4}(C_2(M);\QQ(t))=\QQ(t)[ST(M)_{|S} (\cong S \times S^2)]$$
where $S$ is a closed surface of $M$ such that $H_2(M;\ZZ)=\ZZ[S]$.
\end{lemma}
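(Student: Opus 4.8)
The plan is to compute $H_*(\TCM;\QQ)$ as a $\QQ[t,t^{-1}]$-module by means of the long exact sequence of the pair $(\TCM, \partial \TCM)$, or equivalently the Gysin-type sequence relating $\TCM$, its boundary $\partial \TCM = \ZZ \times ST(M)$, and the interior $\widetilde{M^2}\setminus \ZZ\times\mathrm{diag}$. The key observation is that tensoring with $\QQ(t)$ over $\QQ[t,t^{-1}]$ is exact and kills every $\QQ[t,t^{-1}]$-torsion module; since $H_*(\tilde M;\QQ)$ is $\QQ[t,t^{-1}]$-torsion (it is the Alexander module, annihilated by $\delta$), the covering space $\tilde M$ becomes "invisible" after this localization. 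Concretely, I would first establish that $H_*(\widetilde{M^2};\QQ)\otimes_{\QQ[t,t^{-1}]}\QQ(t) \cong H_*(M;\QQ)\otimes_\QQ \QQ(t)$: write $M$ as the mapping torus of the $0$-surgery picture, or simply use that $\widetilde{M^2} = \tilde M^2/\langle(\theta_M,\theta_M)\rangle$ fibers over $S^1$ (the quotient map recording the "total" $\ZZ$-coordinate) with fiber $\tilde M^2$, and run the Wang exact sequence; after $\otimes\QQ(t)$ the relevant map $(t\cdot(\theta_M\otimes\theta_M)_* - 1)$ becomes an isomorphism except where forced otherwise, isolating exactly the contribution of $H_*(M)$ sitting diagonally. (Alternatively: $H_*(\widetilde{M^2})$ as a module over $\QQ[t,t^{-1}]$ can be computed from $H_*(\tilde M)\otimes H_*(\tilde M)$ with the diagonal $\ZZ$-action via a Künneth/spectral-sequence argument, and only the summand where one factor is $H_0(\tilde M)=\QQ$ — i.e. the untwisted part — survives localization, giving $H_*(M;\QQ)\otimes\QQ(t)$.)

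Next I would feed this into the blow-up. Blowing up $\proj^{-1}(\mathrm{diag})=\ZZ\times\mathrm{diag}(M^2)$ replaces each diagonal copy of $M$ by its unit normal bundle $ST(M)$, so there is a long exact sequence
$$\cdots \to H_i(\partial\TCM;\QQ(t)) \to H_i(\TCM;\QQ(t)) \to H_i(\widetilde{M^2}\setminus\ZZ\times\mathrm{diag};\QQ(t)) \to \cdots$$
together with excision/Thom-isomorphism identifications $H_i(\partial\TCM;\QQ(t))=H_i(\ZZ\times ST(M);\QQ(t))$. Since $\ZZ\times ST(M)$ is a free $\QQ[t,t^{-1}]$-module on $H_*(ST(M);\QQ)$, after $\otimes\QQ(t)$ we get $H_i(\partial\TCM;\QQ(t)) \cong H_i(ST(M);\QQ)\otimes_\QQ\QQ(t)$; and $\widetilde{M^2}\setminus\ZZ\times\mathrm{diag}$ has the same localized homology as $\widetilde{M^2}$, namely $H_*(M;\QQ)\otimes\QQ(t)$, away from the diagonal class. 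Comparing this with the corresponding (un-covered, ordinary) sequence for $C_2(M)$ versus $M^2$ shows that the shift by two in the claim, $H_i(C_2(M);\QQ(t))\cong H_{i-2}(M;\QQ)\otimes\QQ(t)$, comes precisely from the $S^2$-fiber direction of $ST(M)$: the fiber $S^2$ contributes classes in degrees $0$ and $2$, and the degree-$2$ generator $[ST(M)_{|*}]=[*\times S^2]$ survives to generate $H_2$, while its products with the generators of $H_1(M;\QQ)=\QQ[K]$ and $H_2(M;\QQ)=\QQ[S]$ give the generators of $H_3$ and $H_4$ listed.

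To pin down the explicit generators I would track the Leray–Hirsch/Thom classes carefully: over $\QQ(t)$ the unit tangent bundle $ST(M)\cong M\times S^2$ is trivial (using a parallelisation $\tau$, which exists), so $H_*(ST(M);\QQ)\cong H_*(M;\QQ)\otimes H_*(S^2;\QQ)$, and $H_{*+2}$ of the relevant part picks out $H_*(M;\QQ)\otimes[S^2]$. The classes $[*\times S^2]$, $[K\times S^2]$, $[S\times S^2]$ are the images under $H_*(\partial\TCM;\QQ(t))\to H_*(\TCM;\QQ(t))$ of the obvious product classes, using $H_0(M)=\QQ[*]$, $H_1(M;\QQ)=\QQ[K]$, $H_2(M;\QQ)=\QQ[S]$ (recall $b_1(M)=1$, so $H_3(M;\QQ)=\QQ$ as well, contributing in degree $5$, which we don't need). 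One must check these maps are injective after localization — equivalently that the connecting maps to $H_{i-1}(\widetilde{M^2}\setminus\ZZ\times\mathrm{diag};\QQ(t))$ vanish on them; this follows because those boundary classes, being supported near the diagonal and carrying the $S^2$-fiber factor, map to zero in the homology of the complement of the diagonal.

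The main obstacle I anticipate is the first step: controlling $H_*(\widetilde{M^2};\QQ)$ as a $\QQ[t,t^{-1}]$-module precisely enough to see that, after inverting $\QQ[t,t^{-1}]\setminus\{0\}$, only the "constant" part $H_*(M;\QQ)\otimes\QQ(t)$ remains. The subtlety is the interaction between the two commuting $\ZZ$-actions on $\tilde M^2$ (the two factors) and the single residual $\ZZ$ after passing to the quotient $\widetilde{M^2}$; one has to be careful about which torsion genuinely dies. I would handle this by filtering via the Künneth spectral sequence $H_p(\tilde M;\QQ)\otimes_{\QQ[t,t^{-1}]}H_q(\tilde M;\QQ) \Rightarrow H_{p+q}(\widetilde{M^2};\QQ)$ (with the correct diagonal module structure), noting that $H_p(\tilde M;\QQ)$ is $\QQ[t,t^{-1}]$-torsion for $p\ge 1$ while $H_0(\tilde M;\QQ)=\QQ$ (trivial action) — so any tensor factor involving a positive-degree term is torsion and dies under $\otimes\QQ(t)$, leaving only $p=q=0$, i.e. $\QQ$ in degree $0$, which after accounting for the residual $\ZZ$-action (acting trivially) yields exactly $\QQ(t)$ in degree $0$ of $\widetilde{M^2}$... and then the diagonal-inclusion and boundary analysis above repopulates the higher degrees with $[S]$ and $[K]$ classes of $M$. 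Everything else is bookkeeping with long exact sequences and Poincaré–Lefschetz duality on $\TCM$, which I would invoke to cross-check the computed Betti numbers of the modules.
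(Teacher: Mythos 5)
Your overall strategy is sound---localize over $\QQ(t)$ to kill the Alexander torsion, then run the long exact sequence of the blow-up of $D:=\proj^{-1}(\mathrm{diag}(M^2))\cong\ZZ\times M$ together with the Thom isomorphism for its normal bundle $TM$---and the generators $ST(M)_{|\ast}$, $ST(M)_{|K}$, $ST(M)_{|S}$ you name are indeed the images of $[\ast]$, $[K]$, $[S]$ under the resulting connecting map. (The paper offers no proof of this lemma, saying only that it ``can be proved by classical means,'' so the proposal is assessed on its own.) The genuine gap is the intermediate claim that $H_*(\TCMD;\QQ)\otimes_{\QQ[t,t^{-1}]}\QQ(t)\cong H_*(M;\QQ)\otimes_{\QQ}\QQ(t)$: this is false, and in fact $H_*(\TCMD;\QQ)\otimes_{\QQ[t,t^{-1}]}\QQ(t)=0$ in every degree. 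The error is already visible in degree $0$: $\TCMD$ is connected and $\theta$ acts trivially on $H_0(\TCMD;\QQ)\cong\QQ$, so that module is annihilated by $(t-1)$ and vanishes after $\otimes\QQ(t)$. Your sentence ``after accounting for the residual $\ZZ$-action (acting trivially) yields exactly $\QQ(t)$ in degree $0$'' has the logic reversed---a trivial $\theta$-action means the module is $\QQ[t,t^{-1}]$-torsion, which localization kills rather than inflates to $\QQ(t)$. More generally, every $H_p(\tilde{M};\QQ)$ (including $H_0$ and $H_2$) is a torsion $\QQ[\theta_M^{\pm 1}]$-module; hence each K\"unneth summand $H_p(\tilde{M};\QQ)\otimes_{\QQ}H_{n-p}(\tilde{M};\QQ)$ of $H_n(\tilde{M}^2;\QQ)$, with $\theta=(\theta_M,\mathrm{id})$ acting on the first factor, is $\QQ[t,t^{-1}]$-torsion, and the Wang sequence of the $\ZZ$-cover $\tilde{M}^2\to\TCMD$ then forces $H_n(\TCMD;\QQ)$ to be torsion as well.

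This is not a cosmetic slip: feeding your claimed value into the long exact sequence of the pair $(\TCMD,\TCMD\setminus D)$ with the Thom isomorphism $H_i(\TCMD,\TCMD\setminus D;\QQ(t))\cong H_{i-3}(D;\QQ(t))\cong H_{i-3}(M;\QQ)\otimes\QQ(t)$ (recall $D\cong\ZZ\times M$ is a free $\QQ[t,t^{-1}]$-space) would give $H_i(\TCM;\QQ(t))\cong\bigl(H_{i-2}(M;\QQ)\oplus H_i(M;\QQ)\bigr)\otimes\QQ(t)$, contradicting for instance $H_0(\TCM;\QQ(t))=0$. With the correct vanishing $H_*(\TCMD;\QQ(t))=0$, the same long exact sequence collapses to isomorphisms
$$H_i(\TCM;\QQ(t))\;\cong\;H_{i+1}(\TCMD,\TCMD\setminus D;\QQ(t))\;\cong\;H_{i-2}(D;\QQ(t))\;\cong\;H_{i-2}(M;\QQ)\otimes_{\QQ}\QQ(t),$$
and the connecting map sends the Thom class over a cycle $Z\subset M$ to the sphere bundle $ST(M)_{|Z}$, giving exactly the stated generators; the injectivity you were worried about is automatic. (Note also that the three-term sequence you displayed, with $H_i(\TCMD\setminus\ZZ\times\mathrm{diag};\QQ(t))$ as last term, is not the exact sequence of a pair: that space is homotopy equivalent to $\TCM$ itself. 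The correct third term is the relative group $H_i(\TCMD,\TCMD\setminus D;\QQ(t))$, handled via the Thom isomorphism as above.)
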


In particular, the statement of the theorem contains the following lemma:

\begin{lemma}
 The homology class in $H_{3}(C_2(M);\QQ(t))$ of a global section of $ST(M)$ induced by a trivialisation of $M$ is $\ID [ST(M)_{|K}]$.
\end{lemma}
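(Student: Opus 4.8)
The plan is to compute the class of a global section $s_\tau(M) = \tau^{-1}(M \times \cvarM)$ of $ST(M)$ restricted over $K$, inside $H_3(C_2(M);\QQ(t)) = \QQ(t)[ST(M)_{|K}]$, by using the long exact sequences of pairs and a direct intersection-theoretic identification of the coefficient.

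First I would set up the computation as follows. By Lemma~\ref{lemhomTCM}, $H_3(C_2(M);\QQ(t))$ is a one-dimensional $\QQ(t)$-vector space spanned by $[ST(M)_{|K}] = [K \times S^2]$, so $[s_\tau(M)_{|K}] = c(t) [ST(M)_{|K}]$ for a unique $c(t) \in \QQ(t)$; the goal is to show $c(t) = \ID(t)$. The natural way to extract $c(t)$ is to pair with a $3$-dimensional (equivariant) cycle that meets $ST(M)_{|K}$ transversally in a single fiber $S^2$ with the correct sign, namely something like $ST(M)_{|\ast}$ pushed off, or more precisely an equivariant $3$-chain dual to $[ST(M)_{|K}]$ under the equivariant intersection pairing on $\TCM$. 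Since the fiber $S^2$ over a point of $K$ and the fiber $S^2$ over a point of $s_\tau(M)$ live over the same base point, the difference $s_\tau(M)_{|K} - \ID(\theta)\, ST(M)_{|K}$ should bound; indeed this is exactly the vanishing claimed by the boundary formula $\partial G_{\fvarM} = (\theta-1)\delta(\theta)\left(s_\tau(M;\fvarM) - \ID(\theta) ST(M)_{|K_{\fvarM}}\right)$ in Theorem~\ref{thminvtripl}, after multiplying by $(\theta-1)\delta(\theta)$ — but that multiplication is invertible in $\QQ(t)$, so in $H_3(C_2(M);\QQ(t))$ the chain $G_{\fvarM}$ witnesses precisely $[s_\tau(M)_{|K}] = \ID(t)[ST(M)_{|K}]$.

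So the real content is to justify the coefficient $\ID(t) = \frac{1+t}{1-t} + \frac{t\Delta'(t)}{\Delta(t)}$ from first principles, independently of the asserted boundary formula — equivalently, to prove that the section $s_\tau(M)_{|K}$, viewed in the infinite cyclic cover, differs from a multiple of the fiber class by exactly this rational function. The strategy here is to degenerate: restrict everything to a neighborhood of $K$, i.e. work in $\TCM$ over a solid torus neighborhood $N(K)$ of $K$ together with its complement, and use a Mayer–Vietoris / excision argument. Over $N(K) \cong S^1 \times D^2$, the covering is the standard infinite cyclic cover, the unit tangent bundle is trivial, and the section $s_\tau$ is a concrete explicit section; the two terms in $\ID(t)$ arise respectively from the "longitudinal winding" of the section along $K$ (producing the $\frac{1+t}{1-t}$, essentially the equivariant count $\sum_n t^n$ of how $s_\tau$ over one point of $K$ relates to the fiber, regularized) and from the linking of $K$ with itself through the rest of $M$, whose equivariant contribution is governed by the equivariant linking pairing $lk_e$ and hence by the logarithmic derivative $\frac{t\Delta'(t)}{\Delta(t)}$ of the Alexander polynomial (the derivative appears because the diagonal self-intersection of a surface bounding $\delta(\theta)K$ differentiates $\delta$, and the sum over all $\delta_i$ assembles $\Delta' / \Delta$). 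I would make this precise by choosing a Seifert-type surface $\Sigma$ in $\tilde M$ bounding $\delta(\theta_M)$ times a lift of $K$, pushing $K_{\fvarM}$ off along $\tau$, and computing $\langle \Sigma, \theta_M^n(K_{\fvarM}) \rangle$ fiberwise in $ST(M)$.

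The main obstacle I expect is the regularization of the longitudinal term: the naive equivariant count of intersections of the section with a fixed fiber is the divergent series $\sum_{n\geq 0} t^n$ (or a two-sided analogue), and getting the symmetric normalization $\frac{1+t}{1-t}$ — rather than $\frac{1}{1-t}$ or $\frac{t}{1-t}$ — requires care about how the blow-up of the diagonal is parametrized and how the compactification $\TCM$ truncates the section near the fiber $S^2$ (the $\pm \qvarM$ directions, which is why $\fvarM,\svarM,\tvarM$ are taken away from $\pm \qvarM$). Checking that this regularization is forced — i.e. that the resulting class is genuinely independent of auxiliary choices and agrees with the $(\theta-1)\delta(\theta)$-multiplied boundary statement in Theorem~\ref{thminvtripl} — is where the argument needs to be watertight; the logarithmic-derivative term, by contrast, is a fairly standard consequence of the definition of $lk_e$ and the structure of $H_1(\tilde M;\QQ)$ recalled above. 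Once both contributions are identified, summing them gives $\ID(t)$ and hence $[s_\tau(M)_{|K}] = \ID[ST(M)_{|K}]$, as claimed.
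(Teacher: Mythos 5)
Your outline has the right general shape—reduce to a coefficient $c(t)$ in the rank-one $\QQ(t)$-module $H_3(C_2(M);\QQ(t))$, then split $c(t)$ into a near-$K$ term giving $\frac{1+t}{1-t}$ and a global Alexander term giving $\frac{t\Delta'(t)}{\Delta(t)}$—but it never delivers either computation, and its opening move is circular. In the first paragraph you propose to read off $[s_\tau(M)] = \ID\,[ST(M)_{|K}]$ from the existence of a chain $G_{\fvarM}$ with boundary $(\theta-1)\delta(\theta)\bigl(s_{\tau}(M;\fvarM)-\ID(\theta)\,ST(M)_{|K_{\fvarM}}\bigr)$; but that is precisely the boundary condition whose consistency this lemma is needed to justify (the lemma is stated in the paper exactly to explain why the $\ID\,ST(M)_{|K}$ summand cannot be omitted from those boundaries), so you cannot use the existence of $G_{\fvarM}$ as input. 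You acknowledge this and fall back to a plan: work near $N(K)$, pick a surface bounding $\delta(\theta_M)$ times a lift of $K$, and ``compute $\langle\Sigma,\theta_M^n(K_{\fvarM})\rangle$ fiberwise.'' That plan is not set up—no pairing is specified, no reason is given that this surface is the right dual object, no count is carried out. The assertion that the $\frac{t\Delta'(t)}{\Delta(t)}$ term is ``a fairly standard consequence of the definition of $lk_e$ and the structure of $H_1(\tilde M;\QQ)$'' is the content to be proved, not something one can cite: the algebraic identity $\Delta'/\Delta=\sum_i\delta_i'/\delta_i$ does not by itself explain why the equivariant self-pairing of the section along $K$ produces a logarithmic derivative at all.

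The other gap you name yourself and it is not a cosmetic normalization issue. Getting $\frac{1+t}{1-t}$ rather than $\frac{1}{1-t}$ or $\frac{t}{1-t}$ encodes how $s_\tau(M)$ and $ST(M)_{|K}$ are to be compared inside $\partial\TCM\cong\ZZ\times ST(M)$ across the blow-up of the diagonal, and it must land on an antisymmetric rational function (indeed $\ID(t^{-1})=-\ID(t)$, consistent with $\cvarM$ kept away from $\pm\qvarM$). Without a concrete parametrization of the section near the boundary and an actual count—for example first in the test case $M=S^1\times S^2$ where $\Delta=1$ and the answer must collapse to $\frac{1+t}{1-t}$—what you have is a plausible strategy, not a proof: both numerical identities on which the lemma rests are asserted rather than derived.
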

Therefore, $\ID [ST(M)_{|K}]$ cannot be removed from our boundaries.
Lemma~\ref{lemhomTCM} can be proved by classical means. 
Observe that since $$\langle A(K), ST(M)_{|S} \rangle_e =1-t^{-1},$$
the class of $A(K)$ in $H_{2}(\TCM,\partial \TCM)$ detects $[ST(M)_{|S}]$. Thus, the condition $\langle A(K), G_{\fvarM} \rangle_e =0$ ensures that if $G^{\prime}_{\fvarM}$ satisfies the same conditions as $G_{\fvarM}$, $(G^{\prime}_{\fvarM}-G_{\fvarM})$ bounds a $5$-chain and 
$\langle G^{\prime}_{\fvarM},G_{\svarM},G_{\tvarM} \rangle_e=\langle G_{\fvarM},G_{\svarM},G_{\tvarM} \rangle_e.$
Therefore, our algebraic intersection $\langle G_{\fvarM},G_{\svarM},G_{\tvarM} \rangle_e$ is well-defined.

The class of $F_{\fvarM}=\frac{G_{\fvarM}}{(t -1)\delta(t)}$
(that is the same as the class of $F_{\svarM}=\frac{G_{\svarM}}{(t -1)\delta(t)}$ or $F_{\tvarM}=\frac{G_{\tvarM}}{(t -1)\delta(t)}$) in $H_{4}(C_2(M),\partial C_2(M);\QQ(t))$ is dual to $[ST(M)_{|\ast}]$:
$$\langle ST(M)_{|\ast}, F_{\fvarM} \rangle_e =1.$$

For a two-component link $(J,L)$ of $\tilde{M}$ such that
$\projM(J) \cap \projM(L)=\emptyset$, the class of $J \times L$ in $\TCM$ reads
$lk_e(J,L) [ST(M)_{|\ast}]$. By the above equation, this can be rewritten as 
$$lk_e(J,L)=\langle J \times L, F_{\fvarM} \rangle_e$$
and the chains $F_{\cvarM}$ represent the equivariant linking number in this sense.

Recall that all the assertions of this section are proved in details in \cite{betaone}.

\subsection{A few properties of $\CQ$}

Recall that
$\lambda$ denotes the Casson-Walker invariant normalised like the Casson invariant. I added the following proposition in order to answer a question that George Thompson asked me at the conference {\em Chern-Simons Gauge theory : 20 years after\/} in Bonn.
I thank him for asking.

\begin{proposition}
 Let $M_{\KK}$ denote the rational homology sphere obtained from $M$ by surgery along
$\KK$. Then $$\CQ(M,\KK)(1,1,1)=6\lambda(M_{\KK}).$$
\end{proposition}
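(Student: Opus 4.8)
The plan is to evaluate the formula of Theorem~\ref{thminvtripl} at $\fvar=\svar=\tvar=1$ and to identify the resulting number with the known configuration-space formula for $6\lambda$ of the rational homology sphere $M_{\KK}$. First I would observe that when $\fvar,\svar,\tvar$ all tend to $1$, the setting degenerates to a non-equivariant one: the infinite cyclic cover $\TCM$ plays no role anymore since we are, in effect, summing all the translates $\theta^n$. Concretely, the chains $G_{\cvarM}$ satisfy $\partial G_{\cvarM}=(\theta-1)\delta(\theta)(\dots)$, and since $\delta(1)=1$, the ``total'' chain obtained by pushing $\frac{G_{\cvarM}}{(t-1)\delta(t)}$ down to $C_2(M)$ and then to $C_2(M_{\KK})$ should have boundary a genuine propagator boundary $s_{\tau}(M;\cvarM)-\ID(\theta)ST(M)_{|K_{\cvarM}}$; the $\ID$-correction term is exactly the kind of correction that appears when one modifies a propagator near a knot, and it should wash out after surgery because $\hat K$ is null-homologous in $M_{\KK}$, where $ST(M_{\KK})_{|\hat K}$ bounds.

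Next I would make precise the relation between $C_2(M)$ and $C_2(M_{\KK})$. Since $M$ is obtained from $M_{\KK}$ by $0$-surgery on $\hat K$ and, dually, $M_{\KK}$ is obtained from $M$ by surgery on $\KK$, the configuration spaces agree away from the solid tori involved. I would use the surgery formula of Proposition~\ref{propsur} (stated earlier), specialized to $\fvar=\svar=\tvar=1$: the equivariant invariant $\CQ(M,\KK)$ should, at $(1,1,1)$, reduce to a difference of two non-equivariant configuration-space integrals, one over $C_2(M_{\KK})$ and one over a standard piece supported near the surgery torus, the latter contributing the linking-form cube of a lens-space-like piece that vanishes (or is absorbed into the normalization). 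The Pontrjagin term $-\frac{p_1(\tau)}{4}$ is handled by the same anomaly bookkeeping that appears in the definition of $\lambda(N)$ via $\frac{1}{6}\int \omega^3 - (\text{framing correction})$ recalled in the appendix: the trivialization $\tau$ of $TM$ restricts to a trivialization of $TM_{\KK}$ away from $\hat K$, and the discrepancy of first Pontrjagin numbers is exactly $p_1(\tau_{M_{\KK}})-p_1(\tau)$, which must cancel with the corresponding term hidden in the boundary contributions so that the final constant matches the $-\frac{p_1}{4}$ normalization of the Casson formula.

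The core computation is then that, after this reduction, the three chains $F_{\cvarM}$ at $(1,1,1)$ become three codimension-$2$ cycles of $(C_2(M_{\KK}),\partial C_2(M_{\KK}))$ each Poincaré dual to a closed $2$-form $\omega$ representing the linking form of $M_{\KK}$, so that $$\langle F_{\fvarM},F_{\svarM},F_{\tvarM}\rangle_e\big|_{(1,1,1)}=\langle F_X,F_Y,F_Z\rangle=6\lambda(M_{\KK})$$ by the Kuperberg--Thurston description of $\lambda$ recalled in the Background section and detailed in the appendix. Here one must check that the conditions $\langle G_{\cvarM},A(K)\rangle_e=0$, which pin down the $G_{\cvarM}$ uniquely up to boundaries, degenerate at $(1,1,1)$ to conditions that are automatically compatible with (or irrelevant to) the requirements on $F_X,F_Y,F_Z$; this uses Lemma~\ref{lemhomTCM}, since at $\svar=\tvar=1$ the relevant homology group $H_3$ collapses and the normalization becomes vacuous.

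The main obstacle I expect is the careful matching of the various normalization and anomaly terms: keeping track of the factor conventions (the $\frac{1}{6}$, the $\frac{p_1}{4}$ versus $\frac{p_1}{4}$ in the Casson formula, the sign conventions for equivariant intersections, and the ``$I_\Delta$'' correction collapsing at $t=1$ since $\Delta(1)=1$ makes $\frac{t\Delta'(t)}{\Delta(t)}$ finite but $\frac{1+t}{1-t}$ singular — so one must argue the singular part is killed by the $(\theta-1)$ factors before setting $t=1$). In other words, the delicate point is not the topology but showing that all the poles at $\fvar=\svar=\tvar=1$ in the individual pieces cancel and that the finite part is precisely the Casson--Walker configuration-space integral with its standard normalization; I would do this by expanding each factor $\frac{1}{(\fvar-1)(\svar-1)(\tvar-1)\delta(\fvar)\delta(\svar)\delta(\tvar)}$ against the numerator $\langle G_{\fvarM},G_{\svarM},G_{\tvarM}\rangle_e$, using that $\partial G_{\cvarM}$ carries a compensating $(\theta-1)$, and invoking the already-proved well-definedness (from \cite{betaone}) to conclude the limit exists and equals the claimed value.
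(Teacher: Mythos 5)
The paper defers the proof of this proposition to \cite{betaone}, so there is no in-paper argument to compare with. Your high-level strategy — specialize at $\fvar=\svar=\tvar=1$, interpret the resulting triple intersection as the Kuperberg--Thurston configuration-space integral for $\lambda(M_\KK)$ recalled in the appendix, and track the $p_1$ anomaly — is the natural one and almost certainly the right direction. However, two of the steps you lean on do not work as stated. First, invoking Proposition~\ref{propsur} is a misfire: the knot $J$ there is required to be disjoint from $K$ and to bound a Seifert surface disjoint from $K$, so the surgery in that proposition never realizes the passage from $M$ to $M_\KK$, which is a surgery along $\KK$ itself. The formula gives no information about $\CQ(M,\KK)$ versus $\lambda(M_\KK)$ directly; at best it could be used (specialized at $(1,1,1)$ and compared with Walker's surgery formula) to reduce to a base case, but that is a different argument from the one you sketch, and you would still need to show that the admissible surgeries connect an arbitrary $(M,\KK)$ to $(S^1\times S^2,S^1\times u)$.

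Second, and more seriously, the central step — that the chains, once ``pushed to $t=1$'', become a propagator for $M_\KK$ — is asserted rather than constructed. The chains $G_{\cvarM}$ of Theorem~\ref{thminvtripl} have no $(\theta-1)$ factor themselves (only their boundaries do), so $G_{\cvarM}/\bigl((t-1)\delta(t)\bigr)$ cannot simply be evaluated at $t=1$; the well-definedness of $\CQ$ in $R_\delta$ tells you the $0/0$ resolves, not what it resolves to. To make the argument work one needs the alternative formulation in Proposition~\ref{propdefbord}, where the chains $C_{\cvarM}$ are supported in $\tilde C_2(M_{[0,2]})$ and $M_{[0,2]}$ is (up to collar) the common complement $M\setminus N(K)=M_\KK\setminus N(\hat K)$; one then pushes the $C_{\cvarM}$ down to $C_2(M_{[0,2]})\subset C_2(M_\KK)$ and explicitly caps off the resulting boundary inside $C_2(N(\hat K))$. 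Saying that the $I_\Delta\,ST(M)_{|K_{\cvarM}}$ correction ``should wash out because $\hat K$ is null-homologous'' only gives the existence of some $4$-chain with the right boundary; it does not control the triple intersection number, and the boundary term $J_\Delta ST(M)_{|K_{\cvarM}}$ together with $\pi^{-1}(\cvarM)$ on $\partial\tilde C_2(M_{[0,2]})$ must actually be matched to the boundary condition $p_N^{-1}(\cvarM)$ of Theorem~\ref{thmdefcasconf}. Until that chain-level extension is exhibited, the identification with $\langle F_{N,\fvarM},F_{N,\svarM},F_{N,\tvarM}\rangle_{C_2(B(N))}$ is a hope, not a proof.
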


For a function $f$ of $\fvar$, $\svar$ and $\tvar$, $\sum_{\mathfrak{S}_3(\fvar,\svar,\tvar)}f(\fvar,\svar,\tvar)$ stands for $$\sum_{\sigma \in \mathfrak{S}_3(\fvar,\svar,\tvar)}f(\sigma(\fvar),\sigma(\svar),\sigma(\tvar))$$
where $\mathfrak{S}_3(\fvar,\svar,\tvar)$ is the set of permutations of $\{\fvar,\svar,\tvar\}$.

\begin{proposition}
\label{propsur}
Let $J$ be a knot of $M$ that bounds
a Seifert surface $\Sigma$ disjoint from $K$
whose $H_1$ goes to $0$ in $H_1(M)/\mbox{Torsion}$.\\
Let $p/q$ be a nonzero rational number.
Let $(a_i,b_i)_{i=1,\dots,g}$ be a symplectic basis of $H_1(\Sigma)$.
\begin{center}
\begin{pspicture}[shift=-0.1](0,.2)(5.4,2.9)
\psarc[linewidth=1.5pt](1.1,1.6){1.1}{0}{180}
\psarc[linewidth=1.5pt](1.1,1.6){.7}{0}{180}
\psarc{->}(1.1,1.6){.9}{95}{240}
\psarc(1.1,1.6){.9}{240}{95}
\psarc[linewidth=1.5pt,border=2pt](2.3,1.6){1.1}{0}{180}
\psarc[linewidth=1.5pt,border=2pt](2.3,1.6){.7}{0}{180}
\psarc[border=1pt](2.3,1.6){.9}{85}{180}
\psarc{->}(2.3,1.6){.9}{180}{300}
\psarc(2.3,1.6){.9}{-60}{85}
\psecurve[linewidth=1.5pt](1.1,2.7)(0,1.6)(.33,.83)(1.1,.4)(1.3,.4)
\psline[linewidth=1.5pt]{->}(1.1,.4)(5.9,.4)
\rput[b](3.5,.45){$\Sigma$}
\rput[rt](5.9,.35){$J=\partial \Sigma$}
\rput[rb](2.65,.9){$a_1$} 
\rput[lb](.75,.9){$b_1$} 
\psline[linewidth=1.5pt](.4,1.6)(1.2,1.6)
\psline[linewidth=1.5pt](1.6,1.6)(1.8,1.6)
\psline[linewidth=1.5pt](2.2,1.6)(3,1.6)
\psarc[linewidth=1.5pt](4.7,1.6){1.1}{0}{180}
\psarc[linewidth=1.5pt](4.7,1.6){.7}{0}{180}
\psarc{->}(4.7,1.6){.9}{95}{240}
\psarc(4.7,1.6){.9}{240}{95}
\psarc[linewidth=1.5pt,border=2pt](5.9,1.6){1.1}{0}{180}
\psarc[linewidth=1.5pt,border=2pt](5.9,1.6){.7}{0}{180}
\psarc[border=1pt](5.9,1.6){.9}{85}{180}
\psarc{->}(5.9,1.6){.9}{180}{300}
\psarc(5.9,1.6){.9}{-60}{40}
\psarc[border=1pt](5.9,1.6){.9}{40}{85}
\psecurve[linewidth=1.5pt](5.9,2.7)(7,1.6)(6.67,.83)(5.9,.4)(5.7,.4)
\rput[rb](6.25,.9){$a_2$} 
\rput[lb](4.35,.9){$b_2$} 
\psline[linewidth=1.5pt](3.4,1.6)(3.6,1.6)
\psline[linewidth=1.5pt](4,1.6)(4.8,1.6)
\psline[linewidth=1.5pt](5.2,1.6)(5.4,1.6)
\psline[linewidth=1.5pt](5.8,1.6)(6.6,1.6)
\end{pspicture}
\end{center}
Let $$\lambda_e^{\prime}(J)=\frac{1}{12}\sum_{(i,j) \in \{1,\dots,g\}^2}\sum_{\mathfrak{S}_3(\fvar,\svar,\tvar)} \left(\alpha_{ij}(x,y)+\alpha_{ij}(x^{-1},y^{-1}) +\beta_{ij}(\fvar,\svar)\right) \in R_{\delta}$$ where 
$$\alpha_{ij}(x,y)=lk_e(a_i ,a_j^+)(x)lk_e(b_i ,b_j^+)(y) -lk_e(a_i ,b_j^+)(x)lk_e(b_i ,a_j^+)(y)$$ and
$$ \beta_{ij}(\fvar,\svar)=\left(lk_e(a_i,b^+_i )(\fvar)-lk_e(b^+_i,a_i)(\fvar)\right)\left(lk_e(a_j,b^+_j )(\svar)-lk_e(b^+_j,a_j)(\svar)\right),$$
then $$\CQ(M(J;p/q),\KK)-\CQ(M,\KK)= 6\frac{q}{p}\lambda_e^{\prime}(J) + 6 \lambda(S^3(U ;p/q))$$
where $S^3(U ;p/q)$ is the lens space $L(p,-q)$ obtained from $S^3$ by $p/q$--surgery on the unknot $U$.
\end{proposition}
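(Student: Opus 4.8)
The plan is to compute the change in the triple intersection $\langle G_{\fvarM},G_{\svarM},G_{\tvarM}\rangle_e$ under the surgery $M\leadsto M(J;p/q)$, using the fact that outside a tubular neighborhood $N(J)$ the configuration spaces of $M$ and $M(J;p/q)$ are identified. First I would choose the trivialisation $\tau$ of $TM$ so that it extends over the surgered manifold away from $N(J)$, and keep track of the resulting change in $p_1(\tau)$; since $J$ is null-homologous in $H_1(M)/\mbox{Torsion}$, the parallelisation of $K$ is unaffected and the defect in $p_1$ is exactly $p_1(S^3(U;p/q))$-type, contributing the $6\lambda(S^3(U;p/q))$ term via the connected-sum–type formula $\CQ(M\sharp N,\KK)=\CQ(M,\KK)+6\lambda(N)$ already quoted in the introduction. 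So the genuinely new content is the $6\frac{q}{p}\lambda_e^{\prime}(J)$ term, which must come from the difference of the propagator chains $F_{\cvarM}$ on $\tilde C_2$ of the two manifolds.

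The key step is to understand how the equivariant linking pairing changes under $p/q$-surgery on $J$. Here I would use the standard picture: in $\tilde M(J;p/q)$, a surface $\Sigma$ bounding $\delta(\theta)J$ can be modified near $N(J)$ by the rational surgery, and the correction to any equivariant intersection $\langle\Sigma,L\rangle_e$ is governed by the Seifert form of $\Sigma$ — precisely the data $lk_e(a_i,a_j^+)$, $lk_e(a_i,b_j^+)$, etc. The symplectic basis $(a_i,b_i)$ and the choice of pushoffs $a_i^+, b_i^+$ enter exactly as in the classical surgery formula for the Alexander module / Blanchfield pairing, scaled by $q/p$. Thus the difference of the three propagators $F_{\fvarM}, F_{\svarM}, F_{\tvarM}$ between the two manifolds is supported near $\proj^{-1}(N(J)^2)$ and is expressed through these equivariant Seifert-form entries.

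Next I would expand the triple intersection $\langle G_{\fvarM},G_{\svarM},G_{\tvarM}\rangle_e$ bilinearly (trilinearly) in the three propagators, writing each $G_{\cvarM}^{new}=G_{\cvarM}^{old}+H_{\cvarM}$ with $H_{\cvarM}$ the surgery correction chain. The terms with all three $G^{old}$ give $\CQ(M,\KK)$; the terms linear in a single $H_{\cvarM}$ vanish by the normalisation condition $\langle G_{\cvarM},A(K)\rangle_e=0$ together with Lemma~\ref{lemhomTCM} (the correction chains, being supported away from $K$, pair trivially with the relevant homology); the terms quadratic in the $H$'s give a contribution which I would identify, after a Mayer–Vietoris / excision argument localising to $N(J)^2\subset M^2$, with the quadratic expression $\sum_{ij}\sum_{\mathfrak S_3}(\alpha_{ij}+\alpha_{ij}(x^{-1},y^{-1})+\beta_{ij})$ — the $\alpha_{ij}$ coming from the genus-$g$ handle pairs and the $\beta_{ij}$ from the self-linking (framing) correction of the $a_i,b_i$ curves. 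The cubic term in the $H$'s, involving the triple self-intersection inside $N(J)$, contributes the lens-space piece $6\lambda(S^3(U;p/q))$ together with the rational factor $q/p$, essentially reproducing the computation of $\lambda$ for $L(p,-q)$ that appears in the appendix. The symmetrisation over $\mathfrak S_3(\fvar,\svar,\tvar)$ is forced by the symmetry of $\CQ$ already established in Theorem~\ref{thminvtripl}.

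The main obstacle I expect is the careful bookkeeping of orientations and of the $\ZZ[t^{\pm1}]$-equivariance in the localised intersection computation near $\proj^{-1}(N(J)^2)$: one must track how the covering translation $\theta$ interacts with the surgery solid torus, verify that the correction chains $H_{\cvarM}$ have the prescribed boundaries $(\theta-1)\delta(\theta)(\cdots)$ after surgery (so that the new $G_{\cvarM}^{new}$ are still admissible), and check that the quadratic term assembles into exactly $\alpha_{ij}+\alpha_{ij}(x^{-1},y^{-1})+\beta_{ij}$ with the right coefficient $\frac{1}{12}\cdot 6\frac{q}{p}$ rather than some Galois-conjugate or transposed variant. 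Establishing the vanishing of the "mixed" linear and of the unwanted cross terms rigorously — rather than by the heuristic homological argument above — will require the full strength of the machinery of \cite{betaone}, in particular the propagator normalisations recalled after Lemma~\ref{lemhomTCM}.
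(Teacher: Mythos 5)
Your plan of expanding $\langle G_{\fvarM},G_{\svarM},G_{\tvarM}\rangle_e$ trilinearly in propagator corrections $H_{\cvarM}$ is a different route from the paper's. As stated just after the proposition, the paper deduces Proposition~\ref{propsur} in \cite{betaone} from a surgery formula for Lagrangian-preserving replacements of rational homology handlebodies. That approach decomposes the $p/q$-surgery on $J$ into LP replacements and uses a splitting formula rather than a term-by-term expansion; among other things it sidesteps the delicate point that $\tilde{C}_2(M)$ and $\tilde{C}_2(M(J;p/q))$ are genuinely different spaces, identifiable only away from $\proj^{-1}(N(J)^2)$, so writing $G_{\cvarM}^{\mathrm{new}}=G_{\cvarM}^{\mathrm{old}}+H_{\cvarM}$ requires a careful common ambient construction before it even makes sense, and it packages the $q/p$ denominator and the lens-space correction in a single bookkeeping.

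As written, your sketch has an internal inconsistency and an unjustified step. You attribute the $6\lambda(S^3(U;p/q))$ term first to the defect in $p_1(\tau)$ via the connected-sum formula, and then again to the cubic term in the $H_{\cvarM}$'s; both cannot be right without double counting. Moreover the connected-sum formula $\CQ(M\sharp N,\KK)=\CQ(M,\KK)+6\lambda(N)$ does not literally apply, since $M(J;p/q)$ is not $M\sharp S^3(U;p/q)$ in general, and $\lambda(L(p,-q))$ (a Dedekind-sum-type quantity) is not $-p_1/24$ of any trivialisation, so the inference from a $p_1$-defect to the term $6\lambda(S^3(U;p/q))$ is not a proof. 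Finally, the vanishing of the linear terms $\langle H_{\fvarM},G_{\svarM},G_{\tvarM}\rangle_e$ does not follow from the normalisation $\langle G_{\cvarM},A(K)\rangle_e=0$: that condition removes the ambiguity in the choice of $G_{\cvarM}$ for a fixed manifold, but says nothing about how a surgery correction chain supported near $\proj^{-1}(N(J)^2)$ pairs with the two unchanged propagators. You would need a separate homological argument (or the LP-surgery formalism) to see these cross terms cancel, and a genuine computation to show the quadratic terms assemble into $\frac{q}{p}\sum_{i,j}\sum_{\mathfrak{S}_3}(\alpha_{ij}+\alpha_{ij}(x^{-1},y^{-1})+\beta_{ij})$ with the stated normalisation.
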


Since $H_1(\Sigma)$ goes to $0$ in $H_1(M)/\mbox{Torsion}$
in the above statement, $\Sigma$ lifts as homeomorphic copies of $\Sigma$ and $lk_e(a_i ,a_j^+)$ denotes the equivariant linking number of a lift of $a_i$ in $\tilde{M}$ in some lift of $\Sigma$ and a lift of $a^+_j$ near the same lift of $\Sigma$. The superscript $+$ means that 
$a_j$ is pushed in the direction of the positive normal to $\Sigma$.

In \cite{betaone}, we deduce the surgery formula of Proposition~\ref{propsur} for surgeries on knots from a surgery formula for Lagrangian-preserving replacements of rational homology handlebodies.

When the above knot $J$ is inside a rational homology ball, $\lambda_e^{\prime}(J)$ coincides with $\frac{1}{2}\Delta^{\prime\prime}(J)(1)$, where $\Delta(J)$ is the Alexander polynomial of $J$, and the right-hand side is nothing but $6$ times the variation of the Casson-Walker invariant 
under a $p/q$--surgery on $J$. Since any rational homology sphere can be obtained from $S^3$ by a sequence of surgeries on null-homologous knots in rational homology spheres with nonzero coefficients, after a possible connected sum with lens spaces, we easily deduce the following proposition from the above surgery formula.

\begin{proposition}
\label{propconcas}
 Let $N$ be a rational homology sphere, then 
$$\CQ(M \sharp N, \KK)=\CQ(M, \KK) + 6 \lambda(N).$$
\end{proposition}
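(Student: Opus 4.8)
The plan is to derive Proposition~\ref{propconcas} from the surgery formula of Proposition~\ref{propsur} by exhibiting $M \sharp N$ as the result of a sequence of surgeries on null-homologous knots, starting from $M$, keeping track of the invariant $\CQ$ at each step.

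First I would recall the standard fact that any rational homology sphere $N$ can be obtained from $S^3$ by a finite sequence of surgeries $N_0 = S^3, N_1, \dots, N_m = N$, where each $N_{s+1}$ is obtained from $N_s$ by a $p_s/q_s$--surgery (with $p_s/q_s \neq 0$) on a null-homologous knot $J_s \subset N_s$, possibly after allowing connected sums with lens spaces along the way. Performing the corresponding surgeries inside a small ball in $M$ disjoint from $K$ (a ball that also contains the knots $J_s$, which can be assumed null-homologous even in $M$ since they lie in a ball, and whose Seifert surfaces $\Sigma_s$ can be taken inside that ball, hence disjoint from $K$ and with $H_1$ trivial in $H_1(M)/\mbox{Torsion}$), we realize $M \sharp N$ as $M(J_0;p_0/q_0)(J_1;p_1/q_1)\cdots$, and the hypotheses of Proposition~\ref{propsur} are satisfied at each stage.

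Next I would apply Proposition~\ref{propsur} at each step. Since the surgery knot $J_s$ lies in a rational homology ball, the remark following Proposition~\ref{propsur} tells us that $\lambda_e'(J_s) = \frac{1}{2}\Delta''(J_s)(1)$, a constant (independent of $\fvar,\svar,\tvar$), and that $6\frac{q_s}{p_s}\lambda_e'(J_s) + 6\lambda(S^3(U;p_s/q_s))$ equals $6$ times the variation of the Casson--Walker invariant under the surgery, i.e.\ $6\bigl(\lambda(N_{s+1}) - \lambda(N_s)\bigr)$ when we think of the surgery as performed inside the ball. Summing the surgery formula over $s = 0, \dots, m-1$, the intermediate Casson--Walker terms telescope, and using $\lambda(S^3) = 0$ together with the additivity of $\lambda$ under connected sum we obtain $\CQ(M \sharp N, \KK) - \CQ(M,\KK) = 6\lambda(N)$. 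One should also check the connected-sum-with-lens-space bookkeeping is consistent, using $\CQ(M \sharp L, \KK) = \CQ(M,\KK) + 6\lambda(L)$ for a lens space $L$, which is itself the special case of the surgery formula with $M$ replaced by $S^3$ inside the ball; this is why the statement with lens spaces causes no trouble.

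The main obstacle I expect is purely a matter of careful bookkeeping rather than a genuine difficulty: one must verify that every intermediate knot $J_s$ remains null-homologous with a Seifert surface disjoint from $K$ and $H_1$-trivial in $H_1(M)/\mbox{Torsion}$ throughout the sequence of surgeries, and that the identification of $6\frac{q}{p}\lambda_e'(J) + 6\lambda(S^3(U;p/q))$ with $6$ times the Casson--Walker surgery variation is exactly the classical surgery formula for $\lambda$ (this is where one invokes that $\lambda_e'(J) = \frac12\Delta''(J)(1)$ for $J$ in a rational homology ball). Once these points are in place, the telescoping argument is immediate. Full details appear in \cite{betaone}.
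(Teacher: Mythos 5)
Your proposal is correct and takes essentially the same route as the paper: it deduces the statement from the surgery formula of Proposition~\ref{propsur}, using the observation that $\lambda_e'(J)=\frac{1}{2}\Delta''(J)(1)$ when $J$ lies in a rational homology ball so that the variation of $\CQ$ matches $6$ times the Casson--Walker surgery variation, and then telescoping along a sequence of surgeries on null-homologous knots (plus connected sums with lens spaces) that presents $N$ from $S^3$.
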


Recall $I_{\Delta}(t)=\frac{1+t}{1-t} +  \frac{t\Delta^{\prime}(t)}{\Delta(t)}$.

\begin{proposition}
\label{propfrakcha}
Let $\KK^{\prime}$ be another framed knot of $M$ such that $$H_1(M)/\mbox{Torsion} = \ZZ[K^{\prime}].$$
Then there exists an antisymmetric polynomial 
${\mathcal V}(\KK,\KK^{\prime})$ in $\QQ[t,t^{-1}]$ such that 
$$\CQ(M,\KK^{\prime}) - \CQ(M,\KK)=\sum_{\mathfrak{S}_3(x,y,z)}\frac{{\mathcal V}(\KK,\KK^{\prime})(x)}{\delta(x)}I_{\Delta}(y).$$

Furthermore, for any $k \in \ZZ$, there exists a pair of framed knots $(\KK,\KK^{\prime})$ such that ${\mathcal V}(\KK,\KK^{\prime})=q(t^k-t^{-k})$ for some nonzero rational number $q$.
\end{proposition}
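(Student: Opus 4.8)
The plan is to exploit the defining formula of Theorem~\ref{thminvtripl} and to track how each ingredient changes when $\KK$ is replaced by $\KK'$. Recall that $\CQ(M,\KK)$ is built from chains $G_\fvarM$, $G_\svarM$, $G_\tvarM$ whose boundaries involve the sections $s_\tau(M;\cvarM)$ and the correction terms $\ID(\theta)\,ST(M)_{|K_\cvarM}$; the only place where $K$ enters explicitly is through these correction terms and through the normalisation condition $\langle G_\cvarM, A(K)\rangle_e = 0$. The trivialisation $\tau$ may be kept fixed (it depends only on $M$ up to the $p_1$ correction, which cancels in the difference), but note that the parallelisation of $K'$ induced by $\tau$ need not match a chosen framing of $\KK'$; I would first reduce to the case where $\tau$ is adapted to $\KK'$ as well, absorbing the framing discrepancy into a separate, already-understood term, or simply work with the $\tau$-induced framings on both sides and check at the end that the framing-change contribution has the asserted shape.

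\medskip
First I would set up, for a fixed admissible $\tau$, the two systems of chains $(G_\cvarM)$ for $\KK$ and $(G'_\cvarM)$ for $\KK'$. Since $[K]$ and $[K']$ both generate $H_1(M)/\mathrm{Torsion}$, the homology classes $[ST(M)_{|K}]$ and $[ST(M)_{|K'}]$ agree in $H_3(C_2(M);\QQ(t))$ by Lemma~\ref{lemhomTCM}, so the $\QQ(t)$-homology classes of $\partial G_\cvarM$ and $\partial G'_\cvarM$ coincide. Hence the $4$-chains can be compared: each difference $(G'_\cvarM - G_\cvarM)$, after clearing denominators, is a cycle in $H_4(C_2(M),\partial C_2(M);\QQ(t))$ together with a boundary part supported near $ST(M)_{|K}\cup ST(M)_{|K'}$ coming from a cobordism $\Sigma_{KK'}$ in $M$ between $K$ and $K'$ (which exists since their classes agree modulo torsion). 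The key computation is then to evaluate the resulting change in the triple intersection $\langle G_\fvarM,G_\svarM,G_\tvarM\rangle_e$. By multilinearity this splits into terms where one, two, or three of the chains are replaced; the ``one replaced'' terms contribute $\langle (G'_\fvarM-G_\fvarM), G_\svarM, G_\tvarM\rangle_e$ and similar, and the higher terms contribute products that, after dividing by $(\fvar-1)(\svar-1)(\tvar-1)\delta(\fvar)\delta(\svar)\delta(\tvar)$, visibly organise into a sum over $\mathfrak{S}_3$ of a rational function of one variable times $I_\Delta$ of another — because $F_\cvarM = G_\cvarM/((t-1)\delta(t))$ is dual to $[ST(M)_{|\ast}]$ and $\langle ST(M)_{|K'} - ST(M)_{|K},\ F_\cvarM\rangle_e$ is computed by the linking data of $\Sigma_{KK'}$, producing exactly one factor $1/\delta$, while the self-intersection of the section $s_\tau$ against $ST(M)_{|K}$ versus $ST(M)_{|K'}$ is what produces the $I_\Delta$ factor (this is the content of the lemma, stated just before Lemma~\ref{lemhomTCM}, that the section class equals $\ID[ST(M)_{|K}]$).

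\medskip
Collecting the terms, I expect the difference to take the form $\sum_{\mathfrak{S}_3(x,y,z)} \frac{P(x)}{\delta(x)} I_\Delta(y)$ for some Laurent polynomial $P$, and the symmetry $\CQ(M,\KK)(\fvar,\svar,\tvar)=\CQ(M,\KK)(\svar,\fvar,\tvar)$ together with the $\fvar\mapsto\fvar^{-1}$ symmetry and the independence of the orientation of $K$ forces $P$ to be antisymmetric, $P(t)=-P(t^{-1})$; that $P$ lies in $\QQ[t,t^{-1}]$ (no denominator) follows because the left side is in $R_\delta$ and the only possible new pole, at the roots of $\delta$, must cancel against the explicit $\delta$'s already displayed — a residue argument at each root of $\delta$ pins this down. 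I would set $\CV(\KK,\KK') := P$. For the last sentence, I would produce explicit models: take $M$ a mapping torus or a $0$-surgery on a suitable knot so that $\delta$ and $\Delta$ are controlled, and let $\KK'$ be obtained from $\KK$ by a ``band move'' (a crossing change with a parallel pushed $k$ times around the $S^1$ direction of $\tilde M$); computing $\CV$ for this explicit pair reduces to a single equivariant linking number of two lifts whose difference is $q(t^k - t^{-k})$, with $q$ governed by the framing/linking increment and visibly nonzero for a generic such move.

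\medskip
The main obstacle I anticipate is the bookkeeping in the multilinear expansion of the triple intersection: one must show that \emph{every} term — including the genuinely ``cubic'' term $\langle G'_\fvarM - G_\fvarM, G'_\svarM - G_\svarM, G'_\tvarM - G_\tvarM\rangle_e$ and the mixed quadratic ones — either vanishes or reassembles into the single advertised $\mathfrak{S}_3$-symmetric expression, and in particular that no ``two-variable'' residual survives. This is where the normalisation conditions $\langle G_\cvarM, A(K)\rangle_e=0$ and $\langle G'_\cvarM, A(K')\rangle_e=0$ must be used carefully, since $A(K)\neq A(K')$ and one needs to relate the two normalisations via $\langle A(K')-A(K), F_\cvarM\rangle_e$, which itself feeds back into the linking-number data of $\Sigma_{KK'}$. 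Verifying the antisymmetry and the absence of a denominator beyond the displayed $\delta$'s is then comparatively routine given the symmetries already established in Theorem~\ref{thminvtripl}. (All of this is carried out in detail in \cite{betaone}; here I only indicate the structure of the argument.)
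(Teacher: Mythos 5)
The paper defers all proofs of the first part to \cite{betaone}, but the two propositions stated immediately after this one --- the unnumbered one computing ${\mathcal V}(\KK,\KK^{\prime})$ for a pure change of framing, and Proposition~\ref{propcorcalvarb}, which computes ${\mathcal V}(\KK,\KK^{\prime})/\delta$ as $\sum_i\left(lk_e(a_i,b_i^+)-lk_e(b_i^+,a_i)\right)$ for a symplectic basis of a surface $\Bor$ with $\partial\Bor=K^{\prime}-K$ that \emph{lifts to $\tilde{M}$} --- are evidently the intermediate lemmas the intended argument rests on, and your proposal never engages with either of them. You instead go directly at the multilinear expansion of $\langle G^{\prime}_{\fvarM},G^{\prime}_{\svarM},G^{\prime}_{\tvarM}\rangle_e - \langle G_{\fvarM},G_{\svarM},G_{\tvarM}\rangle_e$. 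That is a plausible route, but it has two substantive gaps. First, the cobounding surface $\Sigma_{KK^{\prime}}$ you invoke does not exist in general: $[K^{\prime}]$ and $[K]$ agree only modulo torsion and up to sign, so $K^{\prime}-K$ need not bound in $M$, let alone bound a surface that lifts as Proposition~\ref{propcorcalvarb} additionally requires. One must first reduce, through a chain of elementary moves (band moves along lifting surfaces, plus framing shifts), to the special situation that proposition covers; that reduction is absent from your sketch. Second --- and this is where the proposition actually lives --- the claim that the mixed quadratic and cubic terms in the expansion reassemble into a single $\mathfrak{S}_3$-orbit of $\frac{P(x)}{\delta(x)}I_{\Delta}(y)$, with no residual ``two-variable'' piece, is asserted rather than shown; you name it yourself as the main obstacle and then defer it to \cite{betaone}, so the argument is incomplete exactly at its crux.

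A smaller loose end: your derivation of antisymmetry needs the fact, true but unstated in your sketch, that $I_{\Delta}(t^{-1})=-I_{\Delta}(t)$, which follows from $\Delta(t)=\Delta(t^{-1})$. Given that, and $\delta(t)=\delta(t^{-1})$, the sum $\sum_{\mathfrak{S}_3(x,y,z)}\frac{P(x)}{\delta(x)}I_{\Delta}(y)$ is sent by $(x,y,z)\mapsto(x^{-1},y^{-1},z^{-1})$ to $-\sum_{\mathfrak{S}_3(x,y,z)}\frac{P(x^{-1})}{\delta(x)}I_{\Delta}(y)$, so matching this against the symmetry $\CQ(\fvar,\svar,\tvar)=\CQ(\fvar^{-1},\svar^{-1},\tvar^{-1})$ of Theorem~\ref{thminvtripl} kills the symmetric part of $P$ and justifies replacing $P$ by its antisymmetric part. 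This part of your proposal is salvageable; the gaps in the preceding paragraph are not, at least not without importing the content of Proposition~\ref{propcorcalvarb}, which is what the paper uses.
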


\begin{proposition} If $\KK=(K,K_{\parallel})$ and if $\KK^{\prime}=(K,K^{\prime}_{\parallel})$, where $K^{\prime}_{\parallel}$ is another parallel of $K$ such that the difference $(K^{\prime}_{\parallel} -K_{\parallel})$ is homologous to a positive meridian of $K$ in $\partial N(K)$, then
$${\mathcal V}(\KK,\KK^{\prime})(t)=-\frac{\delta (t)}{2}\frac{t\Delta^{\prime}(t)}{\Delta(t)}.$$
\end{proposition}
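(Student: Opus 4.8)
The plan is to reduce the statement to a careful study of how the chain $A(K)$ and the boundary conditions on the $G$-chains change when the parallel $K_\parallel$ is modified by one positive meridian, and to read off the difference $\mathcal{V}(\KK,\KK')$ directly from Proposition~\ref{propfrakcha}. First I would fix a trivialisation $\tau$ of $TM$ as in Theorem~\ref{thminvtripl}, adapted to $\KK$, and note that it is \emph{also} adapted to $\KK'=(K,K'_\parallel)$ since both parallels induce parallelisations of the same underlying knot $K$; the only change is which curves $K'_\fvarM,K'_\svarM,K'_\tvarM$ on $\partial N(K)$ we choose as the three parallels, and these differ from $K_\fvarM,K_\svarM,K_\tvarM$ by one meridian each. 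Consequently $p_1(\tau)$ is unchanged, so the difference $\CQ(M,\KK')-\CQ(M,\KK)$ comes entirely from the triple-intersection term, and by Proposition~\ref{propfrakcha} it has the stated symmetric form governed by a single antisymmetric polynomial $\mathcal{V}(\KK,\KK')(t)\in\QQ[t,t^{-1}]$. So it suffices to compute $\mathcal{V}(\KK,\KK')$.

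Next I would isolate where the meridian change enters. The chains $G_\fvarM$ etc.\ have boundary $(\theta-1)\delta(\theta)(s_\tau(M;\fvarM)-\ID(\theta)ST(M)_{|K_\fvarM})$; pushing $K_\fvarM$ across one meridian to $K'_\fvarM$ replaces $ST(M)_{|K_\fvarM}$ by $ST(M)_{|K'_\fvarM}$, and the difference of these two $3$-chains is a concrete $3$-cycle supported near $\partial N(K)$ whose class in $H_3(C_2(M);\QQ(t))$ I would compute using Lemma~\ref{lemhomTCM} and the pairing with $A(K)$. The key local model: over $\partial N(K)\cong S^1\times S^1$, moving the section of $ST(M)$ by one meridian sweeps out a cylinder, and its contribution to the equivariant homology is measured against $A(K)$, whose equivariant intersection with $ST(M)_{|S}$ is $1-t^{-1}$. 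Combining this with the correction term $\langle G_\cvarM, A(K)\rangle_e=0$ that pins down the $G$-chains, the discrepancy feeds into $\mathcal{V}(\KK,\KK')$ multiplied by the factor $\delta(t)$ coming from the $\delta(\theta)$ in the boundary formula, and the $I_\Delta$ contribution—specifically the $\frac{t\Delta'(t)}{\Delta(t)}$ part, since the $\frac{1+t}{1-t}$ part is the ``untwisted'' piece that matches the bare meridian shift and cancels—produces the claimed value $-\frac{\delta(t)}{2}\frac{t\Delta'(t)}{\Delta(t)}$. The factor $\frac{1}{2}$ and the sign I would track by orienting the meridian as a positive meridian of $K$ and using the symmetry $\CQ(M,\KK)(\fvar,\svar,\tvar)=\CQ(M,\KK)(\fvar^{-1},\svar^{-1},\tvar^{-1})$, which forces $\mathcal{V}$ to be antisymmetric and halves any naive count.

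The main obstacle will be the bookkeeping in the second step: making precise the $3$-chain $ST(M)_{|K'_\fvarM}-ST(M)_{|K_\fvarM}$ as an element of the relative equivariant homology, and checking that the constraint $\langle G_\cvarM,A(K)\rangle_e=0$ is preserved (or, if not, how the required corrective $4$-chain contributes). A clean way to do this is to compare both invariants against a common reference—e.g.\ a surgery presentation—or, better, to invoke the surgery formula of Proposition~\ref{propsur} in a degenerate case where the ``surgery'' is simply the reframing: a $+1$ change of parallel is a $1/1$-type modification whose effect is computable from the equivariant self-linking data of $K$, and the Alexander-polynomial derivative $\frac{t\Delta'(t)}{\Delta(t)}$ is exactly the equivariant self-linking of $K$ with its zero-framing (equivalently $\ID(t)$ minus its $\frac{1+t}{1-t}$ part). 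I would therefore structure the proof so that the formula $\mathcal{V}(\KK,\KK')(t)=-\frac{\delta(t)}{2}\frac{t\Delta'(t)}{\Delta(t)}$ emerges as the ``diagonal'' analogue of the off-diagonal pairings $lk_e(a_i,b_j^+)$ appearing in $\lambda_e'$, with the factor $\delta(t)$ supplied by the normalisation $R_\delta$ and the $-1/2$ by antisymmetrisation, and leave the remaining identities (all of which are routine once the local model is set up) to a verification against the known special case of knots in rational homology spheres, where $\Delta=\delta=1$ and both sides vanish.
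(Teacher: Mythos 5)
The proposal has a genuine gap at its very first step. You assert that a trivialisation $\tau$ adapted to $\KK=(K,K_{\parallel})$ is ``also adapted to $\KK'=(K,K'_{\parallel})$'' and hence $p_1(\tau)$ is unchanged. This contradicts the hypotheses of Theorem~\ref{thminvtripl}: there $\tau$ must induce the given parallelisation of $K$, and the curves $K_{\fvarM},K_{\svarM},K_{\tvarM}$ are required to match that parallelisation. Since $K_{\parallel}$ and $K'_{\parallel}$ differ by a meridian, they are distinct parallelisations, so one cannot use the same $\tau$ for both; one must pass to a $\tau'$ obtained from $\tau$ by twisting once around $K$. This modifies \emph{three} inputs simultaneously: the section $s_{\tau}(M;\cvarM)$ near $ST(M)|_{N(K)}$, the parallels $K_{\cvarM}$, and $p_1(\tau)$. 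Your argument only accounts for the second of these. Without an argument that the variations of $s_{\tau}$ and $p_1$ combine to zero (which is not obvious, and indeed the $p_1$ term is a constant in $(\fvar,\svar,\tvar)$, so any net constant contribution would have to be shown to match the constant part of the right-hand side through the poles of $I_{\Delta}$ at $t=1$), the computation is incomplete.

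Beyond this, the rest of the proposal is too schematic to count as a proof. The central claim — that the ``$\frac{1+t}{1-t}$ part of $I_\Delta$ cancels'' and the $-\frac{1}{2}$ prefactor comes from ``antisymmetrisation halving any naive count'' — is asserted, not derived; it is precisely the content of the proposition, so invoking it as a heuristic is circular. The fallback appeal to Proposition~\ref{propsur} also does not apply directly: that surgery formula concerns surgeries on a null-homologous knot $J$ disjoint from $K$, whereas changing $K_{\parallel}$ is not a surgery on such a knot, so some additional argument (for instance, relating the reframed pair to a surgery on a meridian-like curve and identifying the resulting $\lambda'_e$-terms with $J_\Delta$) is required and is not supplied. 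Finally, checking the statement in the case $\Delta=\delta=1$, where both sides vanish, does not verify the coefficient $-\frac{1}{2}$ or the appearance of $J_\Delta$ rather than $I_\Delta$; a nontrivial test case or an honest local computation near $\partial N(K)$ is needed.
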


\begin{proposition}
\label{propcorcalvarb} 
If $K$ and $K^{\prime}$ coincide along an interval, if $K^{\prime}-K$ bounds a surface $\Bor$ that lifts in $\tilde{M}$ such that $(K^{\prime}_{\parallel} - K_{\parallel})$ is homologous to a curve of $\Bor$ in the complement of $(\partial \Bor \cup K)$ in a regular neighborhood of $\Bor$, and
if $(a_i,b_i)_{i \in \{1,\dots,g\}}$ is a symplectic basis of $H_1(\Bor;\ZZ)$, then
$$\frac{{\mathcal V}(\KK,\KK^{\prime})(t)}{\delta (t)}=\sum_{i=1}^g\left(lk_e(a_i,b_i^+) -{lk_e(b_i^+,a_i)}\right).$$
\end{proposition}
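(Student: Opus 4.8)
The plan is to reduce the general statement to the previous proposition by an isotopy-and-surgery argument, tracking how $\CQ(M,\KK)$ changes when one modifies the framed knot across the surface $\Bor$. First I would put $\Bor$ in a standard position: since $K$ and $K'$ coincide along an interval and $K'-K=\partial\Bor$, the surface $\Bor$ can be thickened to a handlebody $H$ neighborhood of $\Bor$ inside which the whole modification takes place, and one can assume $\Bor$ lifts to $\tilde M$ (its $H_1$ dies in $H_1(M)/\mathrm{Torsion}$, which is exactly what is needed to make equivariant linking numbers of its curves well-defined). Using the symplectic basis $(a_i,b_i)$ of $H_1(\Bor;\ZZ)$, I would write $K'$ as the image of $K$ under a sequence of elementary band moves, each of which pushes an arc of $K$ across one of the basis curves; equivalently, $\Bor$ is built from bands attached along the $a_i$ and $b_i$.

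The key step is a local computation: the change in $\CQ$ under a single such elementary move across a curve $c$ (with framing correction governed by how $K'_\parallel-K_\parallel$ wraps, as in the hypothesis) should contribute a term of the shape $\mathrm{lk}_e(c,c^+)-\mathrm{lk}_e(c^+,c)$ to $\CV(\KK,\KK')/\delta$. I would obtain this by combining two inputs already available: the surgery formula of Proposition~\ref{propsur}, applied to the null-homologous knot realizing the band's boundary (the $\alpha_{ij}$ and $\beta_{ij}$ terms there are precisely of this form), and the framing-change computation of the preceding proposition, which handles the meridian-wrapping contribution and produces the $t\Delta'(t)/\Delta(t)$ correction that must cancel against the pieces coming from the $I_\Delta$ terms in Proposition~\ref{propfrakcha}. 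Comparing with the general formula $\CQ(M,\KK')-\CQ(M,\KK)=\sum_{\mathfrak S_3}\frac{\CV(\KK,\KK')(x)}{\delta(x)}I_\Delta(y)$ and using that $\CV$ is the antisymmetric polynomial part, I would read off that $\CV(\KK,\KK')/\delta$ must equal the accumulated sum $\sum_{i=1}^g(\mathrm{lk}_e(a_i,b_i^+)-\mathrm{lk}_e(b_i^+,a_i))$; the cross terms $\mathrm{lk}_e(a_i,b_j)$ with $i\neq j$ and the symmetric parts either cancel by antisymmetry or get absorbed into the $I_\Delta$-part, since $\CV$ is antisymmetric by Proposition~\ref{propfrakcha} and $\frac12\frac{t\Delta'(t)}{\Delta(t)}$-type symmetric contributions belong to the $I_\Delta$ side rather than to $\CV$.

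The main obstacle I anticipate is bookkeeping the framing/normalization carefully: one must verify that the particular hypothesis on $(K'_\parallel-K_\parallel)$ being homologous to a curve of $\Bor$ in the complement of $\partial\Bor\cup K$ in a regular neighborhood of $\Bor$ is exactly the condition that makes the meridional framing-correction terms assemble into the self-linking combination $\mathrm{lk}_e(a_i,b_i^+)-\mathrm{lk}_e(b_i^+,a_i)$ with the right coefficient, with no leftover $\Delta'/\Delta$ term — in other words, that the framing is "the surface framing" rather than a twisted one. I would handle this by reducing to the case $g=1$ (a single band, so $\Bor$ an annulus or once-punctured torus) where $K'$ is obtained from $K$ by a band move along $a_1$ together with $\mathrm{lk}$-twists, invoking the preceding two propositions directly, and then building up the general surface by iterating the elementary move, noting that equivariant linking numbers of the $a_i,b_i$ are computed inside fixed lifts of $\Bor$ so the contributions from distinct handles add independently. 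A secondary point to check is that the $(\fvar-1)$-denominators and the $\sum_{\mathfrak S_3}$ symmetrization are consistent between the two sides, but this is forced once the polynomial $\CV$ is identified, by the uniqueness built into Proposition~\ref{propfrakcha}.
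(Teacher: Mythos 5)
The paper defers all proofs from Part~1 to the companion paper \cite{betaone}, so there is no in-paper proof to compare against; I therefore evaluate your strategy on its own terms, and there is a genuine gap at its center. Proposition~\ref{propsur} is not the right tool: it computes $\CQ(M(J;p/q),\KK)-\CQ(M,\KK)$, i.e.\ the change under a surgery on a knot $J$ disjoint from $K$ (with Seifert surface disjoint from $K$) that \emph{changes the ambient manifold $M$ while keeping $\KK$ fixed}. The present proposition is the opposite: $M$ is fixed and the framed knot moves from $\KK$ to $\KK'$. A band move of $K$ across $\Bor$ is not a surgery of the allowed type, and your sketch supplies no translation between the two kinds of modification. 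Even setting that aside, the shapes do not match: $\lambda_e^{\prime}(J)$ is \emph{quadratic} in equivariant linking numbers (every $\alpha_{ij}$ and $\beta_{ij}$ is a product of two $lk_e$'s evaluated at two different formal variables), whereas the target $\CV(\KK,\KK')/\delta=\sum_{i}\bigl(lk_e(a_i,b_i^+)-lk_e(b_i^+,a_i)\bigr)$ is a \emph{linear}, single-variable expression. Your parenthetical claim that the $\alpha_{ij}$, $\beta_{ij}$ terms are ``precisely of this form'' is incorrect, and no antisymmetry argument or ``absorption into the $I_\Delta$-part'' (which makes no sense, since $I_\Delta$ is a fixed multiplier applied to $\CV/\delta$ in Proposition~\ref{propfrakcha}) can turn a quadratic expression into a linear one.

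Your reading of the hypothesis on $(K^{\prime}_{\parallel}-K_{\parallel})$ as the surface framing is sound, and the proposed reduction to $g=1$ followed by additivity over handles is a reasonable organizing scheme. But the base case -- the variation of $\CQ$ under a single elementary move across one handle -- is exactly what is missing, and the preceding (meridian-twist) proposition contributes nothing to it once meridional twisting has been excluded: that proposition handles $K'=K$ with a shifted parallel, and its $-\tfrac{\delta}{2}\tfrac{t\Delta'}{\Delta}$ is the answer $\CV$ in that case, not a ``correction that must cancel.'' What is needed is a direct chain-level analysis of how the conditions defining $G_{\cvarM}$ (or $C_{\cvarM}$ in Proposition~\ref{propdefbord}) change when $K$ is traded for $K'$ -- namely the boundary term $ST(M)_{|K_{\cvarM}}$ and the normalization $\langle A(K),G_{\cvarM}\rangle_e=0$ -- by constructing the relevant bounding $5$-chains from a lift of $\Bor$ and reading the coefficients $lk_e(a_i,b_i^+)-lk_e(b_i^+,a_i)$ off the resulting intersections. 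Your proposal substitutes the wrong surgery formula for that computation.
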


\subsection{The derived $3$-manifold invariant}

\begin{definition}{\em Definition of an invariant for $3$-manifolds of rank one:\/}\\
Let $Q_k(\delta,\Delta)=\sum_{\mathfrak{S}_3(x,y,z)}\frac{x^k-x^{-k}}{\delta(x)}I_{\Delta}(y)$ for $k \in (\NN \setminus \{0\})$.
For a fixed $(\delta,\Delta)$, define $\overline{\CQ}(M)$ as the class of $\CQ(M,\KK)$ in the 
quotient of $R_{\delta}$ by the vector space generated by the $Q_k(\delta,\Delta)$ for $k \in (\NN \setminus \{0\})$.
\end{definition}

Thanks to Proposition~\ref{propfrakcha}, $\overline{\CQ}(M)$ is an invariant of $M$. This invariant is certainly equivalent
to a special case (the two-loop case) of invariants combinatorially defined by Ohtsuki in 2008 in \cite{ohtb},
for $3$-manifolds of rank one,  when $\delta=\Delta$.
The following proposition shows that it often detects the connected sums with rational homology spheres with non-trivial Casson-Walker invariants.

\begin{proposition}
If $\Delta$ has only simple roots and
if $N$ is a rational homology sphere such that $\lambda(N)\neq 0$,
then $\overline{\CQ}(M) \neq \overline{\CQ}(M\sharp N)$.
\end{proposition}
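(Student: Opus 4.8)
The plan is to show that the two manifolds are distinguished by evaluating their $\overline{\CQ}$-classes at $(1,1,1)$, where $\CQ(M,\KK)$ takes the value $6\lambda(M_\KK)$. First I would observe that $M$ and $M\sharp N$ have the same Alexander polynomial $\Delta$ and the same annihilator $\delta$ (connected sum with a rational homology sphere does not change $H_1(\tilde{M};\QQ)$ as a $\QQ[t^{\pm1}]$-module), so $\overline{\CQ}(M)$ and $\overline{\CQ}(M\sharp N)$ genuinely live in the \emph{same} quotient space $R_\delta/\langle Q_k(\delta,\Delta)\rangle$, and the comparison makes sense. Next, by Proposition~\ref{propconcas}, a framed knot $\KK$ generating $H_1(M)/\mathrm{Torsion}$ also serves as a framed knot in $M\sharp N$, and
$$\CQ(M\sharp N,\KK)-\CQ(M,\KK)=6\lambda(N).$$
So it suffices to prove that the nonzero \emph{constant} $6\lambda(N)\in R_\delta$ is not in the vector subspace spanned by the $Q_k(\delta,\Delta)$, $k\in\NN\setminus\{0\}$.

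To see that, I would use the linear functional ``evaluation at $x=y=z=1$'' on $R_\delta$. This is well defined on $R_\delta$ precisely because $\delta(1)=1$ (so $1/\delta(x)$ is regular at $x=1$), and it sends the constant $6\lambda(N)$ to itself, which is nonzero by hypothesis. The key computation is then that each generator $Q_k(\delta,\Delta)$ vanishes at $(1,1,1)$. Write $Q_k(\delta,\Delta)=\sum_{\mathfrak{S}_3(x,y,z)}\frac{x^k-x^{-k}}{\delta(x)}I_\Delta(y)$. The factor $(x^k-x^{-k})/\delta(x)$ vanishes at $x=1$ since $1^k-1^{-k}=0$ and $\delta(1)=1\neq 0$. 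The only subtlety is the factor $I_\Delta(y)=\frac{1+y}{1-y}+\frac{y\Delta'(y)}{\Delta(y)}$, which has a pole at $y=1$; but the hypothesis that $\Delta$ has only simple roots is irrelevant here—what matters is that in each of the six terms of the symmetrized sum, whichever variable carries the $I_\Delta$ factor, \emph{another} variable carries the factor $(\cdot^k-\cdot^{-k})/\delta(\cdot)$ which vanishes to first order there, while the remaining two factors are regular. (Indeed $I_\Delta(y)$ has at worst a simple pole at $y=1$ because $\frac{1+y}{1-y}$ does and $\frac{y\Delta'(y)}{\Delta(y)}$ is regular at $y=1$ since $\Delta(1)=1\neq0$.) Hence each term, and thus $Q_k(\delta,\Delta)$, extends continuously to $(1,1,1)$ with value $0$.

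Therefore the linear functional ``value at $(1,1,1)$'' kills the entire subspace $\langle Q_k(\delta,\Delta)\rangle$ but sends $6\lambda(N)$ to $6\lambda(N)\neq0$, so $6\lambda(N)$ is nonzero in the quotient, i.e. $\overline{\CQ}(M)\neq\overline{\CQ}(M\sharp N)$. I expect the main obstacle to be the bookkeeping around the pole of $I_\Delta$ at $y=1$: one must argue cleanly that ``evaluation at $(1,1,1)$'' is still a well-defined linear map on the relevant subspace of $R_\delta$ (or at least on the affine subspace containing $\CQ(M,\KK)$ and the span of the $Q_k$), either by checking that the poles genuinely cancel term-by-term in $Q_k$ as above, or by invoking the already-quoted fact that $\CQ(M,\KK)(1,1,1)=6\lambda(M_\KK)$ is finite together with the analogous finiteness for $\CQ(M\sharp N,\KK)$, which forces their difference's representative to be regular at $(1,1,1)$. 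The hypothesis ``$\Delta$ has only simple roots'' plays no role in this particular argument and is presumably included only to match the hypothesis under which $\overline{\CQ}$ is a cleanly defined invariant elsewhere; if desired one can drop it here.
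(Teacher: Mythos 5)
Your reduction is sound up to the point where you must show $6\lambda(N)\notin\mathrm{span}\{Q_k(\delta,\Delta):k\ge1\}$, and invoking Proposition~\ref{propconcas} together with the equality of Alexander modules of $M$ and $M\sharp N$ is exactly right. But the computation at the heart of your argument is false: $Q_k(\delta,\Delta)$ does \emph{not} evaluate to $0$ at $(1,1,1)$. Writing $h(u)=(u^k-u^{-k})/\delta(u)$ (so $h(1)=0$, $h'(1)=2k$ since $\delta(1)=1$, $\delta'(1)=0$) and parametrizing $x=e^\alpha$, $y=e^\beta$, $z=e^\gamma$ with $\alpha+\beta+\gamma=0$ (this is the constraint $xyz=1$), one has $I_\Delta(e^\alpha)=-\frac{2}{\alpha}+O(\alpha)$ and $h(e^\alpha)=2k\alpha+O(\alpha^3)$, so
$$h(x)\bigl(I_\Delta(y)+I_\Delta(z)\bigr)=2k\alpha\Bigl(-\tfrac{2}{\beta}-\tfrac{2}{\gamma}\Bigr)+O(\alpha^2)=2k\alpha\cdot\frac{2\alpha}{\beta\gamma}+O(\alpha^2),$$
and summing cyclically, using $\sum\alpha^2/(\beta\gamma)=(\alpha^3+\beta^3+\gamma^3)/(\alpha\beta\gamma)=3$, gives $Q_k(1,1,1)=12k\neq0$. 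Your reasoning that ``the $(\cdot^k-\cdot^{-k})/\delta(\cdot)$ factor vanishes to first order while $I_\Delta$ has only a simple pole'' is already suspect on its own terms: a product such as $(x-1)/(y-1)$, with the zero and the pole in \emph{different} variables, does not extend continuously with value $0$; here the six terms of $Q_k$ have path-dependent limits individually, and it is only the symmetrized sum subject to $xyz=1$ that has a well-defined limit --- and that limit is $12k$, not $0$. Consequently ``evaluation at $(1,1,1)$'' does not annihilate $\mathrm{span}\{Q_k\}$ and cannot be used to separate the constant $6\lambda(N)$ from that span. (Indeed, this is forced by consistency with Proposition~1.5: since $\CQ(M,\KK)(1,1,1)=6\lambda(M_\KK)$ and $\lambda(M_\KK)$ genuinely depends on $\KK$, the correction terms $Q_k$ \emph{must} take nonzero values at $(1,1,1)$.)

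A second warning sign is your conclusion that the hypothesis ``$\Delta$ has only simple roots'' is irrelevant and could be dropped. Since the author included it, a proof that makes no use of it should be viewed with suspicion. The hypothesis is what guarantees $\delta=\Delta$ and hence that the double-residue test $\mathrm{Res}_{y=1}\mathrm{Res}_{x=\theta}Q_k=-2(\theta^k-\theta^{-k})/\delta'(\theta)$ (for $\theta$ a root of $\Delta$, $\theta\ne 1$) has teeth: if $\sum_kc_kQ_k$ were the nonzero constant $6\lambda(N)$, these residues would all vanish, forcing the antisymmetric polynomial $V(t)=\sum_kc_k(t^k-t^{-k})$ to vanish at every root of $\Delta$; simplicity of the roots then lets one divide, write $V=\delta W$ with $W$ antisymmetric, reduce $\sum_kc_kQ_k$ to $\sum_{\mathfrak{S}_3}W(x)I_\Delta(y)$, and examine further residues to force $W=0$. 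Some such residue/divisibility argument --- using both $xyz=1$ and the simple-roots hypothesis --- is the kind of thing needed here; evaluation at $(1,1,1)$ alone does not work.
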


\subsection{An alternative definition for $\CQ$ with boundary conditions}

\label{subdefbord}

Write the sphere $S^2$ as the quotient of $[0,8] \times S^1$ where $\{0\}\times S^1$ is identified with a single point (the North Pole of $S^2$) and $\{8\}\times S^1$ is identified with another single point (the South Pole of $S^2$).
When $\alpha \subset [0,8]$, $D^2_{\alpha}$ denotes the image of $\alpha \times S^1$ via the quotient map $\projq$. For example, $D^2_{[1,8]}$ is a disk. Embed $D^2_{[1,8]} \times S^1$ as a tubular
neighborhood of $K$, so that $K=\{\ast_{\qvarM}\}\times S^1$ for some $\ast_{\qvarM} \in \partial D^2_{[0,5]}$, and $K_{\parallel}=\{q_{\fvarM}\}\times S^1$ for some $q_{\fvarM} \in \partial D^2_{[0,1]}$, and let $M_{[0,1]}=M\setminus (D^2_{]1,8]} \times S^1)$.
More generally, let 
$$\begin{array}{llll}
   r \colon & M & \rightarrow & [1,8]\\
  & x \in M_{[0,1]} & \mapsto & 1\\
& (\projq(t,z_{\qvarM}),z) \in D^2_{[1,8]} \times S^1&\mapsto &t.\\
\end{array}
$$
When $\alpha \subset [0,8]$, $M_{\alpha}=r^{-1}(\alpha)$
and $\tilde{M}_{\alpha}=\projM^{-1}(M_{\alpha})$.

Consider a map $\funcM \colon M \rightarrow S^1$ that coincides with the projection
onto $S^1$ on $D^2_{[1,8]} \times S^1$, and a lift of this map $\tilde{f}_M \colon \tilde{M} \rightarrow \RR$.
Embed $\tilde{M}_{[1,6]}=\projM^{-1}(D^2_{[1,6]} \times S^1)$ in $\RR^3$, seen as $\CC \times \RR$, 
as $\{z \in \CC; 1\leq |z| \leq 6\} \times \RR$ naturally so that the projection on $\RR$ is $\tilde{f}_M$. Here $\CC$ is thought of as horizontal and $\RR$ is vertical.
This embedding induces a trivialisation $\tau$ on $TM_{[1,6]}$ that we extend on $TM_{[0,6]}$.
This trivialisation respects the product structure with
$\RR$ on $\tilde{M}_{[1,6]}$, we also extend it on $\tilde{M}_{[1,8]}$
so that it still respects the product structure with
$\RR$, there.

\noindent{\em Construction of a map $\pi \colon \proj^{-1}\left((M_{[0,4]}^2 \setminus M_{[0,2[}^2)\setminus \mbox{\rm diag}({M}_{[2,4]}^2) \right)\rightarrow S^2$}\\
Fix $\varepsilon \in ]0,1/2]$. 
Let $$\begin{array}{llll}\chi \colon &[-4,4]& \rightarrow& [0,1]\\
&t \in [-\varepsilon,4] &\mapsto &1\\
&t \in [-4,-2\varepsilon] &\mapsto &0
\end{array}$$
be a smooth map. Recall that $\tilde{M}_{[1,4]}$ is embedded in $\RR^3$ that is seen as $\CC \times \RR$.
When $(u,v) \in \left(\tilde{M}_{[0,4]}^2 \setminus \tilde{M}_{[0,2[}^2\right) \setminus \mbox{diag}(\tilde{M}_{[2,4]}^2)$, set
$$U(u,v)=(1-\chi(r(u) -r(v)))(0,\tilde{f}_M(u)) + \chi(r(u)-r(v))u$$
$$V(u,v)=(1-\chi(r(v) -r(u)))(0,\tilde{f}_M(v)) + \chi(r(v)-r(u))v$$
so that $(U(u,v),V(u,v)) \in (\RR^3)^2 \setminus \mbox{diag}$.

Define $$\begin{array}{llll}\pi \colon &\proj^{-1}\left((M_{[0,4]}^2 \setminus M_{[0,2[}^2)\setminus \mbox{diag}({M}_{[0,4]}^2) \right)&\rightarrow& S^2\\
 &\overline{(u,v)}&\mapsto & \frac{V(u,v)-U(u,v)}{\parallel V(u,v)-U(u,v)\parallel}.
         \end{array}$$
When $A$ is a submanifold of $M$, $C_2(A)$ denotes the preimage of $A^2$ under the blowup map
from $C_2(M)$ to $M^2$, and $\tilde{C}_2(A)$ is the preimage of $C_2(A)$ under the covering map of $\TCM$.
The map $\pi$ naturally extends to $\tilde{C}_2(M_{[0,4]})\setminus \tilde{C}_2(M_{[0,2[})$.

Set $$J_{\Delta}=J_{\Delta}(t)=\frac{t\Delta^{\prime}(t)}{\Delta(t)}.$$
Let $S^2_H$ denote the subset of $S^2$ made of the vectors whose vertical coordinate is in $]0,\frac{1}{50}[$.
The following proposition is proved in \cite[Section 12]{betaone}.

\begin{proposition}
 \label{propdefbord}
Let $q_{\fvarM}$, $q_{\svarM}$ and $q_{\tvarM}$ be three distinct points
on $\partial D^2_{[0,1]}$, and let $\fvarM$, $\svarM$ and $\tvarM$ be three distinct vectors
of $S^2_H$.
For $\cvarM=\fvarM, \svarM$ or $\tvarM$, let $K_{\cvarM}=q_{\cvarM}\times S^1 $,
then
there exists a $4$-dimensional rational chain $C_{\cvarM}$ of $\tilde{C}_2(M_{[0,2]})$ whose boundary is
$$\delta(M)\left(\pi_{|\partial \tilde{C}_2(M_{[0,2]}) \setminus \partial \tilde{C}_2(M_{[0,2[})}^{-1}(\cvarM) \cup s_{\tau}(M_{[0,2]};\cvarM) \cup (-J_{\Delta}) ST(M)_{|K_{\cvarM}}\right),$$
and that is transverse
to $\partial \tilde{C}_2(M_{[0,2]})$, and $$\delta(M)(\fvar)\delta(M)(\svar)\delta(M)(\tvar)\CQ(M,\KK)=\langle C_{\fvarM}, C_{\svarM} , C_{\tvarM} \rangle_{e,\tilde{C}_2(M_{[0,2]})}-\frac{p_1(\tau)}{4}.$$
\end{proposition}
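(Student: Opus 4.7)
The plan is to show that the chains $C_\fvarM, C_\svarM, C_\tvarM$ in $\tilde{C}_2(M_{[0,2]})$ extend to rational chains $G_\fvarM, G_\svarM, G_\tvarM$ in all of $\TCM$ satisfying the hypotheses of Theorem \ref{thminvtripl}, and that the equivariant triple intersection computed in the restricted space matches, up to the expected $\delta^3$ factor, the global one. The key comparison comes from the identity
$$(\theta-1) \ID(\theta) = -(1+\theta) + (\theta - 1) J_{\Delta}(\theta),$$
which explains why $\ID$ appears in Theorem \ref{thminvtripl} while only $J_{\Delta}$ appears in Proposition \ref{propdefbord}: the extra $\frac{1+t}{1-t}$ term is absorbed by the multiplication by $(\theta-1)$ that turns the interior boundaries of the $C_\cvarM$ into the $(\theta-1)\delta(\theta)$-type boundaries required for the $G_\cvarM$.

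First I would check that the prescribed boundary of $C_\cvarM$ is indeed a rational cycle in $\partial \tilde{C}_2(M_{[0,2]})$, so that a filling $C_\cvarM$ exists. The three pieces of this boundary sit in three different strata of $\partial \tilde{C}_2(M_{[0,2]})$: the outer part $\pi^{-1}(\cvarM)$ on the codimension-one face coming from $\tilde{M}_{\{2\}}$, the section $s_\tau(M_{[0,2]};\cvarM)$ on the blown-up diagonal, and the correction $-J_{\Delta}(\theta) ST(M)_{|K_\cvarM}$ on the fiber over $K_\cvarM$. Smooth matching along the codimension-two corners (where $\mbox{diag}$ meets $\tilde{M}_{\{2\}}$) follows from the fact that $\pi$ is built from the normalized difference $V-U$ and agrees near the diagonal with the direction map $\tau$ defines on $ST(M)$. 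Vanishing of the resulting homology class in $\tilde{C}_2(M_{[0,2]})$ then follows from a version of Lemma \ref{lemhomTCM} adapted to $M_{[0,2]}$, using that $\delta(M)$ annihilates the relevant module.

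Next I would extend $C_\cvarM$ into $\tilde{C}_2(M_{[0,4]}) \setminus \tilde{C}_2(M_{[0,2[})$ by appending $\pi^{-1}(\cvarM)$ itself, which cancels the outer face boundary term since $\pi$ extends there. The remaining boundary is now the $s_\tau$-section together with the $K_\cvarM$-correction, plus contributions on the new outer face of $\tilde{C}_2(M_{[0,4]})$. I then extend further into all of $\TCM$ by a chain $E_\cvarM$ chosen so that the total boundary becomes $\delta(\theta) \left(s_\tau(M;\cvarM) - J_\Delta(\theta) ST(M)_{|K_\cvarM}\right)$; multiplying through by $(\theta - 1)$ and using the displayed identity above converts this into the boundary required in Theorem \ref{thminvtripl}, namely $(\theta-1)\delta(\theta)\left(s_\tau(M;\cvarM) - \ID(\theta) ST(M)_{|K_\cvarM}\right)$. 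One also verifies $\langle (\theta - 1) \tilde{C}_\cvarM, A(K)\rangle_e = 0$, so these chains qualify as the $G_\cvarM$'s of Theorem \ref{thminvtripl}.

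Finally, I would compute the equivariant triple intersection of the extended chains and show that the contributions from the extensions outside $\tilde{C}_2(M_{[0,2]})$ vanish. The middle-region extensions are preimages $\pi^{-1}(\fvarM)$, $\pi^{-1}(\svarM)$, $\pi^{-1}(\tvarM)$ of three distinct regular values of $\pi$, so they have empty triple intersection; the far extensions $E_\cvarM$ can be chosen supported in disjoint directions (using the flexibility in the choice of $q_\cvarM \in \partial D^2_{[0,1]}$) so that triple products with chains supported near $K_\cvarM$ also vanish. What remains is $(\theta-1)^3 \langle C_\fvarM, C_\svarM, C_\tvarM\rangle_{e, \tilde{C}_2(M_{[0,2]})}$, and dividing by $(\fvar - 1)(\svar - 1)(\tvar - 1)\delta(\fvar)\delta(\svar)\delta(\tvar)$ as in Theorem \ref{thminvtripl} yields exactly $\delta(\fvar)^{-1}\delta(\svar)^{-1}\delta(\tvar)^{-1}\langle C_\fvarM, C_\svarM, C_\tvarM\rangle_{e, \tilde{C}_2(M_{[0,2]})}$, which is the claimed formula after multiplying by $\delta(\fvar)\delta(\svar)\delta(\tvar)$ and noting that $p_1(\tau)/4$ is unchanged. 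The main obstacle is the last step: arranging all the far extensions and signs so that parasitic contributions to the triple intersection cancel cleanly in the equivariant sense, and justifying independence of the result on the many auxiliary choices (the cutoff $\chi$, the embedding of $\tilde{M}_{[1,6]}$ in $\RR^3$, the fillings $C_\cvarM$ and $E_\cvarM$), which requires a careful homological bookkeeping carried out in detail in \cite{betaone}.
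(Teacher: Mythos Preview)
The paper itself does not prove Proposition~\ref{propdefbord}; it explicitly defers the proof to \cite[Section 12]{betaone}. So there is no in-paper argument to compare against, and your sketch must be assessed on its own.

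Your outline has the right shape --- build a bridge from the $C_{\cvarM}$ of Proposition~\ref{propdefbord} to the $G_{\cvarM}$ of Theorem~\ref{thminvtripl} --- but there is a genuine gap in the extension step. You claim one can choose $E_{\cvarM}$ so that the extended chain in all of $\TCM$ has boundary
\[
\delta(\theta)\bigl(s_{\tau}(M;\cvarM) - J_{\Delta}(\theta)\,ST(M)_{|K_{\cvarM}}\bigr).
\]
But by the lemma following Lemma~\ref{lemhomTCM}, the class of $s_{\tau}(M;\cvarM)$ in $H_3(C_2(M);\QQ(t))$ is $I_{\Delta}\,[ST(M)_{|K}]$, so the class of the cycle above is $\delta(t)\frac{1+t}{1-t}[ST(M)_{|K}] \neq 0$. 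A nontrivial class cannot bound; no such $E_{\cvarM}$ exists. The obstruction is exactly the difference between $I_{\Delta}$ and $J_{\Delta}$, and it does not disappear by multiplying by $(\theta-1)$.

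Relatedly, your assertion that ``multiplying through by $(\theta-1)$ and using the displayed identity converts this into the boundary required in Theorem~\ref{thminvtripl}'' is not correct: your own identity gives
\[
(\theta-1)\delta(\theta)\bigl(s_{\tau} - J_{\Delta}\,ST_{|K_{\cvarM}}\bigr)
= (\theta-1)\delta(\theta)\bigl(s_{\tau} - I_{\Delta}\,ST_{|K_{\cvarM}}\bigr)
\;-\; (1+\theta)\,\delta(\theta)\,ST_{|K_{\cvarM}},
\]
so the two boundaries differ by the nonzero term $(1+\theta)\delta(\theta)\,ST_{|K_{\cvarM}}$. The identity \emph{exhibits} the discrepancy rather than eliminating it. To make your approach work you would have to produce an explicit auxiliary chain in $\TCM$ with boundary $(1+\theta)\delta(\theta)\,ST_{|K_{\cvarM}}$ (this is where the contribution of the tubular neighborhood of $K$ and the map $\pi$ near the knot must be analyzed carefully, rather than absorbed into an unspecified $E_{\cvarM}$), and then control its contribution to the equivariant triple intersection and to $\langle\,\cdot\,,A(K)\rangle_e$. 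That is the substantive work carried out in \cite{betaone}, and it is not something that can be waved away.
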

Note that when $M=S^1 \times S^2$, $J_{\Delta}=0$, $\pi$ extends 
to $\tilde{C}_2(M_{[0,2]})$ and $C_{\cvarM}=\pi_{| \tilde{C}_2(M_{[0,2]})}^{-1}(\cvarM)$ fulfills the conditions.
Also note that the proposition implies that $$\delta(M)(\fvar)\delta(M)(\svar)\delta(M)(\tvar)\CQ(M,\KK) \in \frac{\QQ[\fvar^{\pm 1},\svar^{\pm 1}, \tvar^{\pm 1}]}{(\fvar \svar \tvar=1)}.$$

\section{Construction of more general invariants}

The invariant that has been discussed so far corresponds to the
graph $\theta$ (with hair or beads), where the two vertices of the graph $\theta$ correspond to the two points of a configuration in $C_2(M)$, and the three edges are equipped with $\frac{1}{\delta}C_{\fvarM}$, $\frac{1}{\delta}C_{\svarM}$, and $\frac{1}{\delta}C_{\tvarM}$, respectively.

 The chains $C_{\cvarM}$ that were used in the definition of $\CQ$ can be used to define invariants $\tilde{z}_n(M,\KK)$ of $(M,\KK)$ associated to beaded
trivalent graphs with $2n$ vertices, and thus to configuration spaces
$C_{2n}(M)$ of $2n$ points.
We present the construction of these invariants below. Together, they will form a series $(\tilde{z}_n(M,\KK))_{n\in \NN}$, that should be equivalent to the  Kricker rational lift of the Kontsevich integral for null-homologous knots in rational homology spheres, described in \cite{Kr,GK}.

\subsection{On the target spaces}
\label{subdiagbead}

Here, a {\em trivalent graph\/} is a finite trivalent graph.
Such a graph will be said to be {\em oriented\/} if each of its vertices is equipped with a {\em vertex orientation\/}, that is a cyclic order of the three half-edges that meet at this vertex.
Such an oriented graph will be represented by the image of one of its planar immersions so that the vertex orientation is induced by the counterclockwise order of the half-edges meeting at this vertex.
$$\begin{pspicture}[shift=-0.2](0,-.8)(2.8,.8)
\psset{xunit=.4cm,yunit=.4cm}
\psarc[linewidth=.5pt](1,0){.2}{15}{85}
\psarc[linewidth=.5pt](1,0){.2}{130}{230}
\psarc[linewidth=.5pt]{->}(1,0){.2}{275}{345}
\psline[linewidth=1.5pt]{-}(5,0)(4.4,0)
\psline[linewidth=1.5pt]{*->}(1,0)(4.5,0)
\psline[linewidth=1.5pt]{*-*}(6.5,-1.2)(5,0)
\psline[linewidth=1.5pt]{*-}(6.5,1.2)(5,0)
\psline[linewidth=1.5pt]{-}(6.5,1.2)(6.5,-1.2)
\pscurve[linewidth=1.5pt]{-}(1,0)(1,-1)(6.5,-1.2)
\pscurve[linewidth=1.5pt]{-}(1,0)(1,1)(6.5,1.2)
\end{pspicture}$$

Let $\CA^{h}_n(\delta)$ be the rational vector space generated by oriented trivalent graphs with $2n$ vertices whose edges are oriented and equipped with some rational functions of $\QQ[t^{\pm 1},1/\delta(t)]$,
and quotiented by the following relations:
\begin{enumerate}
\item Reversing the orientation of an edge beaded by $P(t)$ and transforming this $P(t)$ into $P(t^{-1})$
gives the same element in the quotient.
$$\begin{pspicture}[shift=-0.6](-.5,-.2)(1,1)
\psline[linewidth=1.2pt]{-}(0,1.1)(.4,.9)
\psline[linewidth=1.2pt]{-}(.8,1.1)(.4,.9)
\psline[linewidth=1.2pt]{*-}(.4,.9)(.4,.35)
\psline[linewidth=1.2pt]{*->}(.4,0)(.4,.45)
\rput(-.05,.45){\small $P(t)$}
\psline[linewidth=1.2pt]{-}(0,-.2)(.4,0)
\psline[linewidth=1.2pt]{-}(.8,-.2)(.4,0)
\end{pspicture} = \begin{pspicture}[shift=-0.6](-.3,-.2)(1.6,1)
\psline[linewidth=1.2pt]{-}(-.2,1.1)(.2,.9)
\psline[linewidth=1.2pt]{-}(.6,1.1)(.2,.9)
\psline[linewidth=1.2pt]{-*}(.2,.55)(.2,0)
\psline[linewidth=1.2pt]{*->}(.2,.9)(.2,.45)
\rput(.8,.45){\small$P(t^{-1})$}
\psline[linewidth=1.2pt]{-}(-.2,-.2)(.2,0)
\psline[linewidth=1.2pt]{-}(.6,-.2)(.2,0)
\end{pspicture}$$
\item If two graphs only differ by the label of one oriented edge, that is $P(t)$
for one of them and $Q(t)$ for the other one,
then the class of their sum is the class of the same graph with label $(P(t)+Q(t))$.
$$
\begin{pspicture}[shift=-0.6](-1.7,-.2)(.8,1)
\psline[linewidth=1.2pt]{-}(0,1.1)(.4,.9)
\psline[linewidth=1.2pt]{-}(.8,1.1)(.4,.9)
\psline[linewidth=1.2pt]{*-}(.4,.9)(.4,.35)
\psline[linewidth=1.2pt]{*->}(.4,0)(.4,.45)
\rput(-.5,.45){\small $P(t)+Q(t)$}
\psline[linewidth=1.2pt]{-}(0,-.2)(.4,0)
\psline[linewidth=1.2pt]{-}(.8,-.2)(.4,0)
\end{pspicture} =
\begin{pspicture}[shift=-0.6](-.7,-.2)(.8,1)
\psline[linewidth=1.2pt]{-}(0,1.1)(.4,.9)
\psline[linewidth=1.2pt]{-}(.8,1.1)(.4,.9)
\psline[linewidth=1.2pt]{*-}(.4,.9)(.4,.35)
\psline[linewidth=1.2pt]{*->}(.4,0)(.4,.45)
\rput(-.05,.45){\small $P(t)$}
\psline[linewidth=1.2pt]{-}(0,-.2)(.4,0)
\psline[linewidth=1.2pt]{-}(.8,-.2)(.4,0)
\end{pspicture} +
\begin{pspicture}[shift=-0.6](-.7,-.2)(.8,1)
\psline[linewidth=1.2pt]{-}(0,1.1)(.4,.9)
\psline[linewidth=1.2pt]{-}(.8,1.1)(.4,.9)
\psline[linewidth=1.2pt]{*-}(.4,.9)(.4,.35)
\psline[linewidth=1.2pt]{*->}(.4,0)(.4,.45)
\rput(-.05,.45){\small $Q(t)$}
\psline[linewidth=1.2pt]{-}(0,-.2)(.4,0)
\psline[linewidth=1.2pt]{-}(.8,-.2)(.4,0)
\end{pspicture}$$
\item Multiplying by $t$ the three rational functions of three edges adjacent to a vertex, oriented towards that vertex, does not change the element in the quotient.
$$
\begin{pspicture}[shift=-1](-.5,-.5)(2,1.4)
\psline[linewidth=1.2pt]{*-}(.5,.5)(.5,-.4)
\psline[linewidth=1.2pt]{->}(.5,-.4)(.5,.05)
\rput(.15,.05){\small $P(t)$}
\psline[linewidth=1.2pt]{-}(-.4,1.4)(.5,.5)
\psline[linewidth=1.2pt]{->}(-.4,1.4)(.05,.95)
\rput(.35,1.15){\small $R(t)$}
\psline[linewidth=1.2pt]{-}(1.4,1.4)(.5,.5)
\psline[linewidth=1.2pt]{->}(1.4,1.4)(.95,.95)
\rput(1.3,.8){\small $Q(t)$}
\end{pspicture}
= \begin{pspicture}[shift=-1](-.5,-.5)(2,1.4)
\psline[linewidth=1.2pt]{*-}(.5,.5)(.5,-.4)
\psline[linewidth=1.2pt]{->}(.5,-.4)(.5,.05)
\rput(.1,.05){\small $tP(t)$}
\psline[linewidth=1.2pt]{-}(-.4,1.4)(.5,.5)
\psline[linewidth=1.2pt]{->}(-.4,1.4)(.05,.95)
\rput(.35,1.15){\small $tR(t)$}
\psline[linewidth=1.2pt]{-}(1.4,1.4)(.5,.5)
\psline[linewidth=1.2pt]{->}(1.4,1.4)(.95,.95)
\rput(1.4,.8){\small $tQ(t)$}
\end{pspicture}
\;\;\;\mbox{and}\;\;\; 
\begin{pspicture}[shift=-1](-.05,-.5)(1.05,1.4)
\psline[linewidth=1.2pt]{*-}(.5,.5)(.5,-.4)
\psline[linewidth=1.2pt]{->}(.5,-.4)(.5,.05)
\rput(.05,.05){\small $P(t)$}
\psarc[linewidth=1.2pt](.5,.9){.4}{85}{270}
\psarc[linewidth=1.2pt]{->}(.5,.9){.4}{-90}{90}
\rput(.5,.95){\small $Q(t)$}
\end{pspicture}=\begin{pspicture}[shift=-1](-.05,-.5)(1.05,1.4)
\psline[linewidth=1.2pt]{*-}(.5,.5)(.5,-.4)
\psline[linewidth=1.2pt]{->}(.5,-.4)(.5,.05)
\rput(.05,.05){\small $tP(t)$}
\psarc[linewidth=1.2pt](.5,.9){.4}{85}{270}
\psarc[linewidth=1.2pt]{->}(.5,.9){.4}{-90}{90}
\rput(.5,.95){\small $Q(t)$}
\end{pspicture}
$$
\item (AS) Changing the orientation of a vertex multiplies the element of the quotient by $(-1)$.
$$ \begin{pspicture}[shift=-0.4](0,-.2)(.8,.7)
\psset{xunit=.7cm,yunit=.7cm}
\psarc[linewidth=.5pt](.5,.5){.2}{-70}{15}
\psarc[linewidth=.5pt](.5,.5){.2}{70}{110}
\psarc[linewidth=.5pt]{->}(.5,.5){.2}{165}{250}
\psline[linewidth=1.5pt]{*-}(.5,.5)(.5,0)
\psline[linewidth=1.5pt]{-}(.1,.9)(.5,.5)
\psline[linewidth=1.5pt]{-}(.9,.9)(.5,.5)
\end{pspicture}
+
\begin{pspicture}[shift=-0.4](0,-.2)(.8,.7)
\psset{xunit=.7cm,yunit=.7cm}
\pscurve[linewidth=1.5pt]{-}(.9,.9)(.3,.7)(.5,.5)
\pscurve[linewidth=1.5pt,border=2pt]{-}(.1,.9)(.7,.7)(.5,.5)
\psline[linewidth=1.5pt]{*-}(.5,.5)(.5,0)
\end{pspicture}=0$$
\item (IHX or Jacobi) The sum of three graphs that coincide outside a disk, where they look as in the picture below, vanishes in the quotient.
(The complete edges of the relation are equipped with the polynomial $1$ that is not written.)
$$
\begin{pspicture}[shift=-0.4](0,-.2)(.8,1)
\psset{xunit=.7cm,yunit=.7cm}
\psline[linewidth=1pt]{-*}(.1,1)(.35,.2)
\psline[linewidth=1pt]{*-}(.5,.5)(.5,1)
\psline[linewidth=1pt]{-}(.75,0)(.5,.5)
\psline[linewidth=1pt]{-}(.25,0)(.5,.5)
\end{pspicture}
+
\begin{pspicture}[shift=-0.4](0,-.2)(.8,1)
\psset{xunit=.7cm,yunit=.7cm}
\psline[linewidth=1pt]{*-}(.5,.6)(.5,1)
\psline[linewidth=1pt]{-}(.8,0)(.5,.6)
\psline[linewidth=1pt]{-}(.2,0)(.5,.6)
\pscurve[linewidth=1pt,border=2pt]{-*}(.1,1)(.3,.3)(.7,.2)
\end{pspicture}
+
\begin{pspicture}[shift=-0.4](0,-.2)(.8,1)
\psset{xunit=.7cm,yunit=.7cm}
\psline[linewidth=1pt]{*-}(.5,.35)(.5,1)
\psline[linewidth=1pt]{-}(.75,0)(.5,.35)
\psline[linewidth=1pt]{-}(.25,0)(.5,.35)
\pscurve[linewidth=1pt,border=2pt]{-*}(.1,1)(.2,.75)(.7,.75)(.5,.85)
\end{pspicture}
=0
\;\;\mbox{where} \begin{pspicture}[shift=-0.4](-.1,-.2)(.8,.7)
\psset{xunit=.7cm,yunit=.7cm}
\psline[linewidth=1pt]{-}(0,1.1)(.4,.9)
\psline[linewidth=1pt]{-}(.8,1.1)(.4,.9)
\psline[linewidth=1pt]{*-*}(.4,.9)(.4,0)
\rput(.2,.45){\small $1$}
\psline[linewidth=1pt]{-}(0,-.2)(.4,0)
\psline[linewidth=1pt]{-}(.8,-.2)(.4,0)
\end{pspicture} =\begin{pspicture}[shift=-0.4](-.1,-.2)(.8,.7)
\psset{xunit=.7cm,yunit=.7cm}
\psline[linewidth=1pt]{-}(0,1.1)(.4,.9)
\psline[linewidth=1pt]{-}(.8,1.1)(.4,.9)
\psline[linewidth=1pt]{*-*}(.4,.9)(.4,0)
\psline[linewidth=1pt]{-}(0,-.2)(.4,0)
\psline[linewidth=1pt]{-}(.8,-.2)(.4,0)
\end{pspicture}.
$$
\end{enumerate}

For example, there is a map $\psi \colon \CA^{h}_1(\delta)
\rightarrow \QQ(\fvar,\svar)$ such that
 $$\psi(\begin{pspicture}[shift=-.5](-.3,0)(2.3,1.6)
\psline[linewidth=1.2pt]{-*}(0,.6)(2,.6)
\psline[linewidth=1.2pt]{*->}(0,.6)(1,.6)
\rput[b](1,.7){\small $Q(t)$}
\psecurve[linewidth=1.2pt]{-*}(1,1.2)(0,.6)(1,0)(2,.6)(1,1.2)
\psecurve[linewidth=1.2pt]{*->}(1,1.2)(0,.6)(1,0)(2,.6)
\rput[b](1,.1){\small $R(t)$}
\psecurve[linewidth=1.2pt]{-*}(1,0)(0,.6)(1,1.2)(2,.6)(1,0)
\psecurve[linewidth=1.2pt]{*->}(1,0)(0,.6)(1,1.2)(2,.6)
\rput[b](1,1.25){\small $P(t)$}
\end{pspicture})=\sum_{\mathfrak{S}_3(x,y,z)}(P(\fvar)Q(\svar)R(\tvar)+P(\fvar^{-1})Q(\svar^{-1})R(\tvar^{-1}))$$ for elements $P(t)$, $Q(t)$ and $R(t)$ of $\QQ[t^{\pm 1},1/\delta(t)]$, where $\tvar=(\fvar\svar)^{-1}$.

A more general version of this space of diagrams was introduced in
\cite[Definition 3.8]{GK}.

\begin{remark}
As pointed out to me by the referee, if we replaced $\QQ[t^{\pm 1},1/\delta(t)]$ by $\QQ[t^{\pm 1},1/\delta(t),1/(t-1)]$ in the definition of $\CA_n^h(\delta)$, unwanted relations would occur. For example, diagrams with separating edges would become trivial. Indeed, such a diagram with a separating edge beaded by $P/(1-t)$ would be equal to the diagram obtained by changing the beading into $Pt/(1-t)$ so that the difference of these two diagrams would be trivial. Therefore, the dumbbell-shaped diagram \begin{pspicture}[shift=-.3](-.2,0)(1.7,.8) 
\psarc(0.2,0.4){.3}{-190}{0}
\psarc{->}(0.2,0.4){.3}{0}{180}
\psline{*-*}(0.5,.4)(1,.4) 
\pscircle(1.3,0.4){.3}
\psarc(1.3,0.4){.3}{-10}{180}
\psarc{->}(1.3,0.4){.3}{-180}{0}
\rput(.1,.4){\small $t$}
\rput(1.4,.4){\small $t$}
\end{pspicture}, that does not vanish in $\CA_1^h(\delta)$, would vanish.
\end{remark}

\subsection{On compactifications of configuration spaces}
\label{subcompconf}

We describe the main features of the Fulton and MacPherson type compactifications of configuration spaces \cite{fmcp} that are used
to define configuration space invariants of knots or manifolds.
Details can be found in \cite[Section 3]{lesconst}.

Let $N$ be a finite set, and let $I$ be a subset of $N$ with cardinality $\sharp I \geq 2$. Let $\Delta_I(M^N)=\{(m_i)_{i \in N}; m_i=m_j\;\; \mbox{if}\;\; \{i,j\} \subset I\}$.
$\Delta_I(M^N)$ is a codimension $3(\sharp I -1)$-manifold.
The fiber of its unit normal bundle at $(m_j)_{j \in N}$ is the space 
$(T_{m_i}M^I/\Delta_I(T_{m_i}M^I) \setminus \{0\})/\RR^{+\ast}$ where $i \in I$. It is the space $C_I(T_{m_i}M)$
of $I$-tuples of points of the tangent space $T_{m_i}M$ up to global translation and global homothety with positive ratio.
Thus, in the blown-up manifold $M^N(\Delta_I(M^N))$ (see Definition~\ref{defblowup}), a point $P$ that projects to $\Delta_I(M^N)$ under the canonical projection is equipped with the data of the infinitely small configuration $c_I(P) \in C_I(T_{m_i}M)$ of the points $m_i$ indexed by $I$ in $T_{m_i}M$ (up to translation and dilation).

Of course, the manifold $C_2(M)$ reads $M^2(\Delta_{\{1,2\}}(M^2))$ with this notation.

Let $J$ be a subset of $I$, $J \subsetneq I$, $\sharp J \geq 2$. $$\Delta_I(M^N) \subset \Delta_J(M^N)$$
The closure of $\Delta_J(M^N) \setminus \Delta_I(M^N)$ in $M^N(\Delta_I(M^N))$ is the submanifold $$\Delta_J(M^N)(\Delta_I(M^N)),$$ and we can blow up $M^N(\Delta_I(M^N))$ along $\Delta_J(M^N)(\Delta_I(M^N))$ to get a manifold with boundary and with corners, where the relative configuration of the possibly coinciding points of $T_{m_i}M$ indexed by elements of $J$ in an infinitely small configuration $c_I(P)$ of points of $T_{m_i}M$ is known. It is thought of as infinitely smaller.

\begin{definition} 
\label{defcgamma} Let $\Gamma$ be a graph whose vertices are indexed by $\{1,2,\dots,2n\}$. 
When $I$ is a subset of $\{1,2,\dots,2n\}$, the graph $\Gamma_I$ is the subgraph of $\Gamma$ made of the vertices indexed in $I$ and the edges between two of them.
The {\em configuration space $C(M,\Gamma)$\/} is 
the smooth compact $6n$-manifold with corners obtained by blowing up successively all the $\Delta_I(M^N)$ for which $\Gamma_I$ is connected and $\sharp I \geq 2$, inductively, starting with maximal not yet treated $I$ with this property. The space $C(M,\Gamma)$ is a compactification of the complement of all the diagonals of $M^{2n}$,
with a canonical projection $p(M,\Gamma)\colon C(M,\Gamma) \rightarrow M^{2n}$.
The {\em configuration space $C_{2n}(M)$\/} is $C(M,\Gamma_c)$
where $\Gamma_c$ is the complete graph with $2n$ vertices.
\end{definition}

Let $P_0$ be a point of $C(M,\Gamma)$, it projects to $(m_i)_{i \in \{1,2,\dots,2n\}}$. Consider the maximal connected subgraphs $\Gamma_I$ of $\Gamma$ (with $\sharp I \geq 2$) such that $m_j=m_k$ whenever $j$ and $k$ are in $I$. For each of these subgraphs $\Gamma_I$, consider the maximal connected subgraphs $\Gamma_J$ with $\sharp J \geq 2$ and $J \subsetneq I$ such that the points indexed by $J$ coincide in the infinitely small configuration $c_I(P_0)$.
Iterating the process, the ``considered'' graphs form a family
$\CE(P_0)$ of connected subgraphs $\Gamma_I$ of $\Gamma$ such that the intersection $\Gamma_I \cap \Gamma_J$ of any two graphs of the family is $\Gamma_I$, $\Gamma_J$ or $\emptyset$.
The points $P$ such that $\CE(P)$ is empty are the points in the interior of $C(M,\Gamma)$. If $\CE(P_0)\neq \emptyset$, the points  $P$ such that $\CE(P)=\CE(P_0)$ are in the boundary of $C(M,\Gamma)$ and they form a codimension $\sharp \CE(P_0)$ {\em face\/} of $C(M,\Gamma)$.
In particular, the {\em codimension $1$ faces\/} of $C(M,\Gamma)$ correspond to connected subgraphs $\Gamma_I$ of $\Gamma$.

When $\Gamma$ has no {\em loops\/} (edges whose two ends coincide), for every oriented edge $e$ of $\Gamma$ from the vertex $v(j)$ of $\Gamma$ indexed by $j$ to $v(k)$, there is a canonical projection
$$p(\Gamma,e) \colon C(M,\Gamma) \rightarrow C_2(M)$$
that maps a point $P$ of $p(M,\Gamma)^{-1}((m_i)_{i \in \{1,2,\dots,2n\}})$ to $(m_j,m_k)$ if $m_j \neq m_k$, and to the infinitely small configuration of $(T_{m_j}M^{\{j,k\}}/\Delta_{\{j,k\}}(T_{m_j}M^{\{j,k\}}) \setminus \{0\})/\RR^{+\ast}$ that can be seen at some scale in $C(M,\Gamma)$, if $m_j = m_k$.

\subsection{Definition of the invariants $\tilde{z}_n(M,\KK)$}

Fix $(M,\KK,\tau)$ as in Subsection~\ref{subdefbord}
so that $\tau$ maps the tangent vectors of $K$ to $\RR \qvarM$, where $\qvarM$ is the vertical vector of $S^2$ pointing upward. For any $n \in \NN \setminus \{0\}$, the invariant $\tilde{z}_n(M,\KK,\tau)$ is defined as follows.
Consider 
\begin{itemize}
\item $\varepsilon \in ]0,\frac{1}{2n}[$, $\funcM$ (involved in the definition of the map $\pi$ in Subsection~\ref{subdefbord}),
\item $3n$ disjoint parallels $(K_1, \dots, K_{3n})$ of $K$ on $ \partial M_{[0,1]}$, with respect to the parallelisation of $\KK$,
\item a {\em regular\/} $3n$-tuple $(\fvarM_1, \dots,\fvarM_{3n})$  of $(S^2_H)^{3n}$, where {\em regular\/} means in some open dense subset of $(S^2_H)^{3n}$ that will be specified in Subsection~\ref{subuplegen},
\item $3n$ integral chains (integral combinations of properly $C^{\infty}$--embedded $4$-simplices) $H_i$ in {\em general $3n$-position\/} in $\tilde{C}_2(M_{[0,2]})$  whose boundaries are $$k\delta(t)
\left(\pi_{|\partial \tilde{C}_2(M_{[0,2]}) \setminus \partial \tilde{C}_2(M_{[0,2[})}^{-1}(X_i) \cup s_{\tau}(M_{[0,2]};X_i) \cup (-J_{\Delta}) ST(M)_{|K_i}\right),$$ respectively, for some positive integer $k$.
The notion of {\em general $3n$-position\/} will be specified in Definition~\ref{defcyclegen}.
\end{itemize}

The chains $H_i$ will be seen as cycles of $(C_2(M_{[0,2]}),\partial C_2(M_{[0,2]}))$ (combinations of simplices) with coefficients in $\QQ[H_1(M)/\mbox{Torsion}]$ by projection as follows.
Pick a basepoint $\ast_C$ in $ST(M_{[0,2]}) \subset \partial {C}_2(M)$, and consider the equivalence relation on the set of paths of $C_2(M)$ from $\ast_C$ 
to some other point in $C_2(M)$ that identifies two paths if they lift to paths with identical ends in $\TCM$. For a point of $C_2(M)$ that projects to $(m_1,m_2) \in M^2$, this datum is equivalent to a rational homology class of paths from $m_1$ to $m_2$.
A {\em chain of $C_2(M)$ with coefficients\/} in $\QQ[H_1(M)/\mbox{Torsion}]$ is a chain every point $P$ of which is equipped with a class of paths from $\ast_C$ to $P$ as above, in a continuous way (or every simplex (or contractible cell) of which is equipped with such a class of paths in a way compatible with face identifications).

Let $\CS_n$ be the set of connected trivalent graphs $\Gamma$ with $2n$ vertices numbered from $1$ to $2n$ and with $3n$ oriented edges numbered from $1$ to $3n$ without loops.

Consider $\Gamma \in \CS_n$.
Set $C(M_{[0,4]},\Gamma)=p(M,\Gamma)^{-1}(M_{[0,4]}^{2n}) \subset C(M,\Gamma)$. See Definition~\ref{defcgamma}.
Let $e=e(i)$ be the edge of $\Gamma$ numbered by $i$ that goes from  $v(j)$ to $v(k)$. Consider the map $p(\Gamma,i)=p(\Gamma,e(i))\colon C(M,\Gamma) \rightarrow C_2(M)$ that lifts 
$(m_1, \dots, m_{3n}) \mapsto (m_j,m_k)$, continuously.
Let $H_i(\Gamma) \subset C(M_{[0,4]},\Gamma)$ be defined as
$$H_i(\Gamma)=p(\Gamma,i)^{-1}\left(H_i \cup k\delta(t) \pi^{-1}(X_i)\right)$$ 
where $\pi^{-1}(X_i)=\pi_{|\tilde{C}_2(M_{[0,4]}) \setminus  \tilde{C}_2(M_{[0,2[})}^{-1}(X_i) \subset \tilde{C}_2(M_{[0,4]}) \setminus \tilde{C}_2(M_{[0,2[})$ and $H_i$ are seen as chains with coefficients. Then $H_i(\Gamma)$ is a chain (here, a combination of manifolds with boundaries) whose points (that project to) $(m_1, \dots, m_{3n})$ are continuously equipped with rational homology classes of paths from $m_j$ to $m_k$. It is cooriented in $C(M,\Gamma)$ (that is oriented like $M^{2n}$) by the coorientation of $H_i$ in $C_2(M)$.

Then the intersection of the $H_i(\Gamma)$, for $i \in \{1,2,\dots, 3n\}$, is a compact subspace $I(\Gamma,\{H_j\})$ of $C(M_{[0,4]},\Gamma)$. Its image $p(\Gamma,i)(I(\Gamma,\{H_j\}))$ in $H_i\cup \pi^{-1}(X_i)$ is a compact subset of $H_i\cup \pi^{-1}(X_i)$.

\begin{definition}
 \label{defcyclegen}
The $H_i$ are said to be in {\em general $3n$-position\/} if, for any $\Gamma \in \CS_n$, 
\begin{itemize}
\item $I(\Gamma,\{H_j\})$ is finite,
\item $p(\Gamma,i)(I(\Gamma,\{H_j\}))$ is made of points in the interiors of the $4$-simplices of $H_i$ or in the interior of $\pi^{-1}(X_i)$, for any $i \in \{1,2,\dots, 3n\}$, and, 
\item all the $H_i(\Gamma)$ intersect transversally at the corresponding points that are in the interior of $C(M_{[0,4]},\Gamma)$.\end{itemize}
\end{definition}

The fact that such $H_i$ exist will be proved
in Subsection~\ref{subtrans}.

Fix $\Gamma \in \CS_n$.
Under the given assumptions,
the cooriented $H_i(\Gamma)$ only intersect transversally at distinct points 
in the interior of $C(M_{[0,4]},\Gamma)$.
We define their equivariant algebraic intersection $I_{\Gamma}(\{H_i\}) \in \CA^{h}_n(\delta)$ as follows.

Consider an intersection point $m$, such that $p(M,\Gamma)(m)=(m_1, \dots, m_{2n}) \in M^{2n}$, it is equipped with a sign $\varepsilon(m)$ as usual, and it is also equipped with the following additional data:
Associate $m_j$ with $v(j)$. Since $m$ belongs to $H_i(\Gamma)$, the edge $e(i)$ from $v(j)$ to $v(k)$ is equipped with a rational homology class of paths from $m_j$ to $m_k$. Each edge of $\Gamma$ is equipped with a rational homology class of paths between its ends in this way.
Choose a simply connected graph $\gamma$ (e. g. a spider) in $M$ that contains a basepoint and all the points $m_j$ of $M$. A path from $m_j$ to $m_k$ can be composed with paths in the graph $\gamma$ to become a loop based at the basepoint whose rational homology class is well-determined by the rational homology class of the path and by $\gamma$. It reads $n(m,e(i),\gamma)[K]$
for some integer $n(m,e(i),\gamma)$.
Let $\Gamma(m,\gamma)$ be the graph obtained from $\Gamma$ by assigning $t^{-n(m,e(i),\gamma)}/(k\delta(t))$ to the oriented edge $e(i)$ for each edge. Orient the vertices of $\Gamma$ so that the permutation of the half edges from (first half of first edge, second half of first edge, \dots, second half of last edge) to (half-edges of the first vertex
ordered in a way compatible with the vertex orientation, \dots) is even.

Note that the class of $\Gamma(m,\gamma)$ in $\CA^{h}_n(\delta)$ does not depend on $\gamma$. Indeed, changing the path that goes from $m_j$ to the basepoint in $\gamma$
amounts to add some fix integer to $n(m,e,\gamma)$ for the edges $e$ going to $m_j$ and to remove this integer from $n(m,e,\gamma)$ for the edges starting at $m_j$. Because of Relations 1 and 3, it does not change the class of $\Gamma(m,\gamma)$ that will be denoted by $[\Gamma(m)]$.
Finally, assign $\varepsilon(m)[\Gamma(m)] \in \CA^{h}_n(\delta)$ to $m$.

Then define $I_{\Gamma}(\{H_i\}) \in \CA^{h}_n(\delta)$ as the sum over the intersection points of the $\varepsilon(m)[\Gamma(m)]$.

\begin{theorem}
\label{thmhighloop}
$$\tilde{z}_n(M,\KK,\tau)=\sum_{\Gamma \in \CS_n}\frac{I_{\Gamma}(\{H_i\})}{2^{3n}(3n)!(2n)!} \in \CA^{h}_n(\delta)$$ is an invariant of $(M,\KK,\tau)$.
Let $\xi_n$ be the element of $\CA_n^h(1)$ defined in \cite[Section 1.6]{lesconst}. It is an element of the subspace of $\CA_n^h(\delta)$ generated by the diagrams whose edges are beaded by $1$, and it is zero when $n$ is even.
Then $$\tilde{z}_n(M,\KK)=\tilde{z}_n(M,\KK,\tau)+\frac{p_1(\tau)}{4}\xi_n$$ is an invariant of $(M,\KK)$.
\end{theorem}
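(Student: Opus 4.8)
The plan is to mimic the proof of invariance of $\CQ(M,\KK)$ from \cite{betaone} (and of the Kuperberg--Thurston / Lescop invariant of homology spheres in \cite{lesconst}), adapted to the equivariant, $3n$-vertex setting. First I would check that the equivariant algebraic intersection $I_\Gamma(\{H_i\})$, and hence $\tilde z_n(M,\KK,\tau)$, is well-defined: given two systems of chains $\{H_i\}$, $\{H'_i\}$ in general $3n$-position with the prescribed boundaries, each difference $H'_i-H_i$ is an equivariant boundary in $\tilde C_2(M_{[0,2]})$ modulo the class of $A(K)$ (which is killed by the constraints, cf. the discussion after Lemma~\ref{lemhomTCM}), so one can interpolate by a generic $5$-chain homotopy $H^{(1)}_i$ in $\tilde C_2(M_{[0,2]})\times[0,1]$ and track how $\sum_\Gamma I_\Gamma$ changes. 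The variation is a sum of two types of terms: (i) interior crossings of the moving chain through the other $H_j(\Gamma)$, which cancel in pairs because they contribute with opposite signs across the homotopy; and (ii) boundary contributions where the intersection point runs into a face of $C(M_{[0,4]},\Gamma)$. Type (ii) is the real content.

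The heart of the proof is therefore the \emph{boundary/face cancellation}: for each $\Gamma\in\CS_n$ and each codimension-one face of $C(M_{[0,4]},\Gamma)$, corresponding to a connected subgraph $\Gamma_I$ (points indexed by $I$ colliding) or to a principal/anomalous face at $\partial M_{[0,2]}$, the total contribution of that face, summed appropriately over $\CS_n$, must vanish. This is where the combinatorial factor $\frac{1}{2^{3n}(3n)!(2n)!}$ and Relations (4)(AS) and (5)(IHX) of $\CA_n^h(\delta)$ enter. Following the standard scheme: faces where $\sharp I\ge 3$ and $\Gamma_I$ is not all of $\Gamma$ are killed either by a dimension/degree count (the relevant restricted form has too large a codimension) or by the IHX relation after grouping the three graphs that agree outside a trivalent vertex; faces where $\Gamma_I$ is an edge (two points colliding along an edge) are killed because the corresponding propagator form $\pi^{-1}(X_i)$ is a genuine $S^2$-valued map near the small diagonal, so two such faces for an edge $e$ and its reverse cancel via Relation (1), together with the choice of the propagators $C_{X}$ built in Proposition~\ref{propdefbord} with exactly the boundary $\pi^{-1}(X)\cup s_\tau(M;X)\cup(-J_\Delta)ST(M)_{|K_X}$ that makes the $ST(M)_{|K}$-pieces match up; and faces where all $2n$ points collide reduce to $\xi_n$-type universal integrals over $C_{2n}(\RR^3)$ of a product of $S^2$-forms, which is exactly where the anomaly appears.

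Once the face analysis gives that $\tilde z_n(M,\KK,\tau)$ is independent of the auxiliary data $(\varepsilon,\funcM,K_i,X_i,H_i)$, I would prove independence of the \emph{trivialisation} $\tau$: two trivialisations of $TM$ that agree near $K$ (so that both send $K$'s tangent to $\RR\qvarM$) differ by a map $M\to SO(3)$ that is trivial on $K$; tracking the resulting change in the sections $s_\tau(M;X)$ through the same face-cancellation machinery shows that the only surviving variation is the universal anomalous contribution, which by the computation in \cite[Section 1.6]{lesconst} equals $-\frac{\Delta p_1(\tau)}{4}\xi_n$ (note $\xi_n=0$ for $n$ even, consistent with $SO(3)$-anomaly vanishing in those degrees). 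Adding back $\frac{p_1(\tau)}{4}\xi_n$ therefore produces a quantity $\tilde z_n(M,\KK)$ independent of $\tau$, hence an invariant of $(M,\KK)$, since $p_1(\tau)$ itself accounts for the remaining dependence. The main obstacle I anticipate is precisely the bookkeeping of the ``hidden'' and ``anomalous'' faces in the equivariant setting — keeping track of the $\QQ[H_1(M)/\mathrm{Torsion}]$-weights $t^{-n(m,e,\gamma)}$ through a collision and verifying that Relation (3) (simultaneous multiplication by $t$ at a vertex) is exactly what makes the IHX cancellations survive equivariantly, rather than merely after setting $t=1$; the degree-counting arguments that kill the ``two-thirds'' faces also need to be redone carefully because the propagators here are codimension-$2$ rational chains with noncompact-looking boundary pieces $s_\tau(M;X)$ and $J_\Delta\, ST(M)_{|K}$ rather than closed forms.
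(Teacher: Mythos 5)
Your high-level plan matches the paper's: first show well-definedness of $\tilde z_n(M,\KK,\tau)$ by a cobordism/Stokes argument reducing the variation to contributions from codimension-one faces of $C(M_{[0,4]},\Gamma)$, then handle the $\tau$-dependence via the anomaly $\xi_n$, using \cite[Section 1.6]{lesconst} for the final computation. But your assignment of cancellation mechanisms to faces is wrong in a way that would break the argument, and you omit a key lemma.

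Concretely, in the paper the IHX relation is \emph{not} used for faces with $\sharp I\ge 3$: it handles precisely the face where $\Gamma_I$ is a \emph{single edge} ($\sharp I=2$), by grouping the three graphs $\Gamma^b,\Gamma^c,\Gamma^d$ that collapse to the same $\Gamma/\Gamma_I$ with a $4$-valent vertex. You instead propose that the edge-collapse face cancels ``for an edge $e$ and its reverse... via Relation (1)''; this does not match the geometry (the contracted graph has a $4$-valent vertex whose resolution is exactly IHX, and no ``reversed'' companion face is present). The faces with $\sharp I\ge 3$ (and also the $\sharp I=2$ double-edge case, where the $J_\Delta$-beaded $ST(M)_{|K}$ pieces show up) are handled in the paper not by degree counting alone nor by IHX, but by a \emph{parallelogram involution} $\hat\rho$ on faces — reflecting the bivalent vertex across the opposite side of a parallelogram, and reversing the two incident edge orientations — which is absent from your proposal; degree counting (the local-diffeomorphism constraint) only serves to exclude valency-$1$ vertices when $\sharp I>2$ and to force the structure of $\Gamma_I$.

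Two further gaps: (1) you place the all-$2n$-collide (``anomalous'') face inside the well-definedness step, but the paper shows via the transversality count $4-2\chi_k-\chi_A=\mathrm{val}(v(I))\ge 1$ that this face \emph{never} contributes when only the $H_i$ (or $X_i$) are varied; the valency-$0$ case, and hence $\xi_n$, arises only when changing $\tau$, because then the constraint maps constrain both fiber and base. (2) You appeal to the $A(K)$ constraint from the discussion after Lemma~\ref{lemhomTCM}, but that constraint belongs to the closed-manifold definition of $\CQ$ in Theorem~\ref{thminvtripl}; in the bordered setting of Proposition~\ref{propdefbord} and Theorem~\ref{thmhighloop} the correct justification for the cobounding $5$-chain is $H_4(C_2(M_{[0,2]});\QQ[t^{\pm 1}])=0$. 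Finally, the paper also needs a separate lemma (using regularity of $(X_i)_i$ and a subgraph-construction argument) to show that the boundary intersection points project into $M_{[0,4[}^{2n}$, i.e. never reach the outer boundary $r=4$; your proposal does not address this and it is not automatic.
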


The collection $(\Delta(M),(\tilde{z}_n(M,\KK))_{n\in \NN})$ should be equivalent to the Kricker rational lift of the Kontsevich integral.

Note that with the map $\psi$ of the end of Subsection~\ref{subdiagbead} we have
$$ \CQ(M,\KK)= \psi(\tilde{z}_1(M,\KK)).$$
The properties of the chains $H_i$ that are proved in \cite{betaone} will allow us to generalize properties of $\CQ(M,\KK)$ to $(\tilde{z}_n(M,\KK))_{n\in \NN}$ in a future work.

\begin{remark} Though the graphs $\Gamma$ of $\CS_n$ in Theorem~\ref{thmhighloop} have no loops, it is shown in \cite[Section 9.2]{betaone} that the evaluation of $\tilde{z}_1$ on a dumbbell-shaped clasper, whose looped edges are equipped with the equivariant linking numbers of the corresponding leaves, is the corresponding element \begin{pspicture}[shift=-.3](-.2,0)(1.7,.8) 
\psarc(0.2,0.4){.3}{-190}{0}
\psarc{->}(0.2,0.4){.3}{0}{180}
\psline{*-*}(0.5,.4)(1,.4) 
\pscircle(1.3,0.4){.3}
\psarc(1.3,0.4){.3}{-10}{180}
\psarc{->}(1.3,0.4){.3}{-180}{0}
\rput(.1,.4){\small $P$}
\rput(1.4,.4){\small $Q$}
 \end{pspicture} of $\CA^1_h(\delta)$. Similar properties are expected for all the $\tilde{z}_n$.
\end{remark}

\subsection{Regular points of $(S^2_H)^{3n}$}
\label{subuplegen}
The restriction of $p$ to $$\left(\tilde{C}_2(M_{[0,4]}) \setminus \tilde{C}_2(M_{[0,2[})\right) \cap \pi^{-1}(\overline{S^2_H})$$ is a diffeomorphism $p_H$ onto its image $P_H$ in $\left({C}_2(M_{[0,4]}) \setminus {C}_2(M_{[0,2[})\right)$.
Define $\overline{\pi}$ as $\pi \circ p_H^{-1}$ on $P_H$.

Consider a graph $\Gamma \in \CS_n$, its set of vertices $V(\Gamma)$, a coloring $L\colon V(\Gamma) \rightarrow \{M_{[0,2]},M_{[2,4]}\}$ of its vertices, and a set $B$ of edges of $\Gamma$ that contains all the edges between two vertices of color
$M_{[0,2]}$. Set $A=\{1,\dots,3n\} \setminus B$ and 
$${C}(\Gamma,L,B)=p(M,\Gamma)^{-1}\left(M_{[0,2]}^{L^{-1}(\{M_{[0,2]}\})} \times M_{[2,4]}^{L^{-1}(\{M_{[2,4]}\})}\right) \cap \cap_{a\in A}p(\Gamma, a)^{-1}(P_H).$$
Consider the product $$g(\Gamma,L,B)= \prod_{a\in A}\overline{\pi} \circ p(\Gamma, a)\times \mbox{Identity}(S^2)^B \colon {C}(\Gamma,L,B)\times (S^2)^B \rightarrow (S^2)^A \times (S^2)^B.$$

A {\em regular value\/} of this map $g(\Gamma,L,B)$ is a point $Y$ of the target
$(S^2)^{3n}$ such
that for any point $y$ of $g(\Gamma,L,B)^{-1}(Y)$, for any face $\CF$ of ${C}(\Gamma,L,B)$, the tangent map of $g(\Gamma,L,B)$ at $y$ is surjective, and, if $y\in \CF \times (S^2)^B$, the tangent map of $g(\Gamma,L,B)_{|\CF \times (S^2)^B}$ at $y$ is surjective. 

\begin{lemma}
 The set of regular values of $g(\Gamma,L,B)_{|g(\Gamma,L,B)^{-1}((S_H^2)^{3n})}$ is open and dense in $(S_H^2)^{3n}$.
\end{lemma}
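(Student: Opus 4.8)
The plan is to apply a suitable parametrized version of Sard's theorem to the smooth map $g(\Gamma,L,B)$ and to its restrictions to the (finitely many) faces of the compact manifold-with-corners ${C}(\Gamma,L,B)\times (S^2)^B$. First I would recall that ${C}(\Gamma,L,B)$, being cut out of the Fulton--MacPherson compactification $C(M,\Gamma)$ by preimages of the $M_{[0,2]}$/$M_{[2,4]}$ stratification and by the closed condition $\cap_{a\in A}p(\Gamma,a)^{-1}(P_H)$, is a compact smooth manifold with corners; in particular it has only finitely many open faces $\CF$ (codimension-$0$ interior plus positive-codimension strata indexed by the connected subgraphs $\Gamma_I$, exactly as in the discussion after Definition~\ref{defcgamma}), and $(S^2)^B$ is a closed manifold, so the product has finitely many faces as well. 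The map $g(\Gamma,L,B)$ is smooth on each face because $\overline{\pi}=\pi\circ p_H^{-1}$ is smooth on $P_H$ (the identification $p_H$ is a diffeomorphism and $\pi$ extends smoothly to $\tilde{C}_2(M_{[0,4]})\setminus\tilde{C}_2(M_{[0,2[})$ by the construction in Subsection~\ref{subdefbord}) and the canonical projections $p(\Gamma,a)$ are smooth on $C(M,\Gamma)$.

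Next, for density: by the classical Sard theorem applied to the smooth map $g(\Gamma,L,B)_{|\CF\times(S^2)^B}$ between finite-dimensional manifolds, the set of its critical values has measure zero in $(S^2)^{3n}$ for each face $\CF$, and a finite union of measure-zero sets has measure zero. A point $Y$ that is a regular value of $g(\Gamma,L,B)_{|\CF\times(S^2)^B}$ for \emph{every} face $\CF$ — including the top-dimensional interior face, which gives surjectivity of the full tangent map at preimages in the interior — is exactly a regular value of $g(\Gamma,L,B)$ in the sense defined just above the lemma. Hence the complement of the set of regular values has measure zero, so the set of regular values is dense; intersecting with the open subset $(S^2_H)^{3n}$ keeps it dense there (openness of $(S^2_H)^{3n}\subset(S^2)^{3n}$ and the fact that a measure-zero set cannot contain an open set).

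For openness one cannot use Sard; instead I would use the compactness of the source together with the standard fact that ``surjective tangent map along a compact set'' is an open condition. Concretely, let $Y$ be a regular value lying in $(S^2_H)^{3n}$. I claim that $(g(\Gamma,L,B)_{|g^{-1}((S^2_H)^{3n})})^{-1}(Y)$ is compact: it is the intersection of the closed set $g(\Gamma,L,B)^{-1}(Y)$ with $g(\Gamma,L,B)^{-1}((S^2_H)^{3n})$, and since $Y\in(S^2_H)^{3n}$ and $S^2_H$ has compact closure $\overline{S^2_H}$ inside $S^2$ with $g(\Gamma,L,B)^{-1}(\overline{S^2_H}^{3n})$ closed in the compact space ${C}(\Gamma,L,B)\times(S^2)^B$ (here the closedness of $P_H$, equivalently of $\pi^{-1}(\overline{S^2_H})$ in $\tilde C_2(M_{[0,4]})\setminus\tilde C_2(M_{[0,2[})$, is used to see that ${C}(\Gamma,L,B)$ is itself compact), the fiber over $Y$ is a closed subset of a compact space, hence compact, and it is actually contained in $g^{-1}((S^2_H)^{3n})$. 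Around each point $y$ of this compact fiber, transversality of $g(\Gamma,L,B)_{|\CF\times(S^2)^B}$ to $\{Y\}$ (for the face $\CF$ containing $y$) persists at all nearby points and for all nearby target values, because the rank of a smooth map is lower-semicontinuous and each of the finitely many face-restrictions is smooth; covering the compact fiber by finitely many such neighborhoods and using that $g(\Gamma,L,B)$ is a closed map (proper, as a continuous map from a compact space to a Hausdorff space restricted appropriately) one finds a neighborhood of $Y$ in $(S^2_H)^{3n}$ all of whose points are regular values. I expect the main obstacle to be the bookkeeping at the corners: one must check that ${C}(\Gamma,L,B)\times(S^2)^B$ really is a manifold with corners with finitely many faces and that the relevant restrictions of $g(\Gamma,L,B)$ are smooth up to and along those corner strata — this is where the careful description of the faces of $C(M,\Gamma)$ from Subsection~\ref{subcompconf} and the smooth extension of $\pi$ from Subsection~\ref{subdefbord} do the real work, while the Sard/compactness arguments themselves are routine once that geometric setup is in place.
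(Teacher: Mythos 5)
Your proof is correct and follows essentially the same route as the paper: density via the Morse--Sard theorem applied face by face, and openness via compactness of the source. The only real difference is that the paper's openness argument is shorter — it simply notes that the set where the relevant tangent maps fail to be surjective is closed in the compact source, so its image is compact, hence closed, hence the set of regular values is open; your fiber-by-fiber neighborhood argument proves the same thing with more bookkeeping.
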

\begin{proof}
The density is a direct corollary of the Morse-Sard theorem \cite[Chapter 3, Section 1]{hirsch}. Since
the set where the tangent maps are not all surjective is closed in the compact source of $g(\Gamma,L,B)$, its image is compact, and the set of regular values is open.
\end{proof}

Thus the finite intersection of the sets of regular values of such maps $g(\Gamma,L,B)$ is also dense and open.

\begin{definition}
 \label{defuplegen}
Let $P_V(X_i)$ be the vertical plane of $\RR^3$ that contains $X_i$.
An element $(X_i)_i$ of $(S^2_H)^{3n}$ is {\em regular\/} if it is a regular value for all the maps $g(\Gamma,L,B)$, and if, for any pair $\{i,j\}$ of $\{1,2,\dots,3n\}$, $P_V(X_i) \cap P_V(X_j)$ is the vertical line. 

\end{definition}
The set of regular $(X_i)_i$ is dense and open in $(S^2_H)^{3n}$.
\begin{lemma}
\label{lemintboun}
If $(\fvarM_1, \dots,\fvarM_{3n})$ is regular, then 
 the $H_i(\Gamma)$ do not intersect over the small diagonal of $M^{2n}$ in $C_{2n}(M)$.
\end{lemma}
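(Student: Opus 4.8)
The plan is to analyze what it would mean for the chains $H_i(\Gamma)$ to intersect over the small diagonal $\Delta_{\{1,\dots,2n\}}(M^{2n})$, i.e.\ in a face $\CF$ of $C_{2n}(M)$ sitting over a single point $m\in M$. By Lemma~\ref{lemintboun}'s hypothesis $(X_i)_i$ is regular, so in particular it is a regular value of every map $g(\Gamma,L,B)$; I would exploit this by choosing, for a putative common intersection point $x$ over the small diagonal, the data $(L,B)$ dictated by $x$ (the coloring records whether each vertex lies in $M_{[0,2]}$ or $M_{[2,4]}$, and $B$ records which edges carry the chain $H_i$ versus the $\pi$-preimage part $\pi^{-1}(X_i)$). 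Over the small diagonal all $2n$ points collapse, and the face $\CF$ records an infinitely small configuration in the tangent space $T_mM$; the key is that the directions seen by the edges labelled in $A$ are exactly the $\overline{\pi}\circ p(\Gamma,a)(x)=X_a$, while the edges in $B$ contribute via the $\mathrm{Identity}(S^2)^B$ factor. A common intersection point thus forces the value $Y=(X_i)_i\in (S^2_H)^{3n}$ to be a \emph{non-regular} value of $g(\Gamma,L,B)$ restricted to the relevant face $\CF\times (S^2)^B$.

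The dimension count is the heart of the argument. First I would note that since all $X_i\in S^2_H$ have positive vertical coordinate, and $K$ is tangent to the \emph{vertical} direction $\qvarM$ everywhere (via the chosen $\tau$), no edge can have its $\pi$-direction equal to $\pm\qvarM$; combined with the last clause of Definition~\ref{defuplegen} ($P_V(X_i)\cap P_V(X_j)$ is the vertical line for $i\neq j$), the directions $X_i$ are ``generic'' enough that no two coincide and none is tangent to $K$. Then: the source $C(\Gamma,L,B)\times (S^2)^B$ has dimension $\le 6n - (\text{codimension of the face }\CF) + 2\sharp B$, while the target $(S^2)^A\times (S^2)^B = (S^2)^{3n}$ has dimension $6n$. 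Restricting to the face $\CF$ drops the source dimension by at least $1$ (more, for deeper faces), so the tangent map of $g(\Gamma,L,B)_{|\CF\times(S^2)^B}$ at $x$ cannot be surjective — contradicting that $(X_i)_i$ is a regular value. Hence no such intersection point exists.

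The main obstacle I expect is bookkeeping the combinatorics of the faces over the small diagonal: a point of $C_{2n}(M)$ over $m$ carries a whole nested family $\CE(x)$ of infinitely small configurations (Definition~\ref{defcgamma} and the discussion following it), and I must check that for \emph{every} such stratum the relevant restricted map $g(\Gamma,L,B)_{|\CF\times(S^2)^B}$ genuinely has source dimension strictly less than $6n$, uniformly in $\Gamma\in\CS_n$ and in the coloring/edge-set data. This requires verifying that collapsing all $2n$ points imposes at least the codimension-one loss used above, never compensated by the extra $S^2$-factors from $B$ — essentially that edges in $B$ between two coincident points still only see a bounded direction space. Once that dimension bookkeeping is pinned down, the conclusion is immediate from the definition of regularity: a common intersection point over the small diagonal would exhibit $(X_i)_i$ as a non-regular value, so regularity of $(X_i)_i$ precludes it.
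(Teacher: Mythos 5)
Your high-level strategy (exhibit a putative small-diagonal intersection point as a non-regular preimage of $(\fvarM_i)_i$ for one of the maps $g(\Gamma,L,B)$) is indeed the paper's strategy. But the dimension count you give — source dimension $\le 6n-1+2\sharp B$ versus target dimension $6n$ — only rules out surjectivity when $B=\emptyset$. As soon as $\sharp B\ge 1$ (which does happen: the edge $e(i)$ may land in the $ST(M)_{|K_i}$ part of $\partial H_i$ when the collapse point $m$ lies on $K_i$), your bound gives $6n-1+2\sharp B\ge 6n+1$, and a codimension-one face loss is nowhere near enough. You flag this as the ``main obstacle'' but your proposed resolution (``edges in $B$ between two coincident points still only see a bounded direction space'') is vague and is not what closes the gap.

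The paper's argument closes it with two observations you are missing. First, over the small diagonal the configuration lives in $T_mM\cong\RR^3$ via $\tau$, and each $\overline{\pi}\circ p(\Gamma,a)$ reads as ``direction of edge $e(a)$''; this map is invariant under \emph{global translations and positive dilations} of $\RR^3$, so its differential on the small-diagonal stratum has rank at most $6n-4$, not merely $6n-1$. Second, the curves $K_1,\dots,K_{3n}$ are pairwise disjoint, so the collapse point $m$ can lie on at most one of them, hence $\sharp B\le 1$. Combining, the image of the relevant restriction of $g(\Gamma,L_{[2,4]},B)$ has dimension at most $(6n-4)+2\sharp B\le 6n-2<6n$, and regularity of $(\fvarM_i)_i$ then forbids a preimage. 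Without the translation-and-dilation rank drop and the disjointness argument bounding $\sharp B$, the proof does not go through. (A side remark: the clause $P_V(\fvarM_i)\cap P_V(\fvarM_j)=\mathrm{vertical\ line}$ that you invoke from Definition~\ref{defuplegen} is used elsewhere in the invariance proof, not here, and does not supply the missing rank drop.)
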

\begin{proof} If they did, there would be an intersection point in the small diagonal of $C_{2n}(M)$.
Such a point would be a configuration in the tangent space $T_m M$ of $M$ at some point $m$ of $M$, where $T_m M$ is identified with $\RR^3$ via $\tau$. Let $D$ be a $3$--ball in $M_{[3, 3+\varepsilon]}$.
Note that $C(D,\Gamma)$ embeds in $C(\RR^3,\Gamma)$ and that $\overline{\pi} \circ p(\Gamma, a)$ reads as the map ``direction of $e(a)$'', there. This map factors through 
the global translations and the homotheties with positive ratio.
Let $L_{[2,4]}$ be the constant coloring with value $M_{[2,4]}$.
Since $(\fvarM_1, \dots,\fvarM_{3n})$ is regular, it belongs neither
to the --at most $(6n-4)$-dimensional-- $p(\Gamma , L_{[2,4]},\emptyset)(C(D,\Gamma) \cap {C}(\Gamma,L_{[2,4]},\emptyset))$, nor to the --at most $(6n-2)$-dimensional-- $p(\Gamma , L_{[2,4]},\{e(i)\})((C(D,\Gamma)\cap {C}(\Gamma,L_{[2,4]},\{e(i)\}))\times S^2)$ for some edge e(i), when $m$ belongs to $K_i$. 
\end{proof}

\subsection{Transversality}
\label{subtrans}

In this subsection, we prove that there exist $H_i$ in general $3n$-position.

It follows from general transversality properties that the $H_i(\Gamma)$ can be perturbed so that they have a finite number of transverse intersection points (see \cite[Chapter 3]{hirsch}). The subtleties here are
\begin{itemize}
\item  that we want simultaneous transversality for all the $\Gamma \in \CS_n$,
\item that we would like to perturb the $H_i$ rather than the $H_i(\Gamma)$\footnote{The $H_i(\Gamma)$ are associated with $H_i$ and $\Gamma$, and the face identifications, that will be performed later,  could not be performed without the fact that $H_i(\Gamma)$ is precisely the preimage of some fixed $H_i \cup k\delta(t) \pi^{-1}(X_i)$.}.
\end{itemize}

To prove the existence of $H_i$ in general $3n$-position, we proceed as follows.

Choose cycles $H^0_i$, together with their simplicial decompositions, with the given boundaries, so that the $H^0_i$ are transverse to $\partial \tilde{C}_2(M_{[0,2]})$.

Assume that
\begin{itemize}
 \item the $4$-dimensional simplices of $H^0_i$ are $C^{\infty}$ embeddings indexed by elements $j$ of some finite set $S(i)$, and equipped with local coefficients,
 $$\Delta^0_j(i) \colon \Delta^4 \rightarrow O_j(i) \subset C_2(M_{[0,2]}) $$
of the standard $4$-simplex $\Delta^4$ in open regions $O_j(i)$ of $C_2(M_{[0,2]})$ such that there are $C^{\infty}$ embeddings 
$$\Phi_j(i) \colon O_j(i) \rightarrow \RR^6$$ with open image in some intersection of half-spaces in $\RR^6$,
\item there is a set $B_i$ of $3$-dimensional faces of the $4$-dimensional simplices of $H^0_i$ that constitute a simplicial decomposition of $\partial H^0_i$,
\item there is a bijection $b_i$ of the set of the $3$-dimensional faces of the $\Delta^0_j(i)$ that are not in $B_i$ to $A_i\times\{+,-\}$, for some finite set $A_i$, such that\\ for any $a \in A_i$, the face labeled by $(a,+)$ has the same image in $C_2(M)$ than the face labeled by $(a,-)$ with opposite boundary orientation and matching local coefficients (this is the {\em face identification\/}),
\item the simplices $\delta^0_r(i)$ of dimension $k$ ($k \leq 4$) of $H^0_i$ are codimension $0$ submanifolds with boundaries of the preimage of a regular point $N_{i,r}$ under a constraint map $f(\delta_r(i))$ from the intersection of the open sets $O_j(i)$ such that $\delta^0_r(i)$ is a face of $\Delta^0_j(i)$ to $S^{6-k}$.
\end{itemize}

Define $\CM(H^0_i)$ as the set of deformations 
$H_i=\{\Delta_j(i) \colon \Delta^4 \rightarrow O_j(i)\}_{j\in S(i)}$ of the $H^0_i=\{\Delta^0_j(i) \}_{j\in S(i)}$, where the set $S(i)$ of simplices, the face identification and the simplices of $\partial H_i$ are fixed.

Equip $\CM(H^0_i)$ with the $C^{\infty}$ topology that is its topology
of a subset of $$\prod_{j \in S(i)}C^{\infty}(\Delta^4,O_j(i))$$ where the topology of $C^{\infty}(\Delta^4,O_j(i))$ is defined in \cite[Chapter 2, Section 1]{hirsch}.

Consider the topological space
$$\CM=\prod_{i=1}^{3n}\CM(H^0_i).$$

Define the subset $\CM_{\ast}$ of $\CM$ made of the $(H_1, H_2, \dots, H_{3n})$ whose simplices satisfy:\\
For each $\Gamma \in \CS_n$, for any $3n$-tuple $(\delta_{j_i}(i) \subset H_i \cup \pi^{-1}(X_i))_{i \in \{1,\dots,3n\}}$ of simplices, where $\pi^{-1}(X_i)$ and $\partial \pi^{-1}(X_i)$ are abusively considered as additional simplices $\Delta_{0}(i)$ and $\partial \Delta_{0}(i)$ and the constraint map associated to $(\Delta_{0}(i) \subset (O_0(i)=\overline{\pi}^{-1}(S^2_H)))$ is $\overline{\pi}$,\\
$\ast(\Gamma,\delta_{j_i}(i))$: the $p(\Gamma,i)^{-1}(\delta_{j_i}(i))$ are transverse in $C(M_{[0,4]},\Gamma)$.

\begin{proposition}
 $\CM_{\ast}$ is open and dense in $\CM$. In particular, there exists a $(3n)$-tuple $(H_1, H_2, \dots, H_{3n})$ in general $3n$-position.
\end{proposition}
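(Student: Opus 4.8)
The plan is to establish openness and density separately, and to deduce the existence statement from the density of $\CM_{\ast}$ in the non-empty space $\CM$.

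\emph{Density.} Fix $\Gamma \in \CS_n$ and a $3n$-tuple $(\delta_{j_i}(i))_{i}$ of simplices as in the definition of $\CM_{\ast}$. The transversality condition $\ast(\Gamma,\delta_{j_i}(i))$ is a condition on the images $p(\Gamma,i)^{-1}(\delta_{j_i}(i))$ in each face $\CF$ of $C(M_{[0,4]},\Gamma)$; on each such face it is an open dense condition by the parametric transversality theorem applied to the constraint maps $f(\delta_{j_i}(i))$ (into spheres $S^{6-k}$) precomposed with the projections $p(\Gamma,i)$, exactly as in \cite[Chapter~3]{hirsch}. The key point is that the constraint maps are defined on the $H_i$ themselves (not on the $H_i(\Gamma)$), so a small perturbation of the embeddings $\Delta_j(i)\in\CM(H^0_i)$, keeping the face identifications and $\partial H_i$ fixed, suffices to move $p(\Gamma,i)^{-1}(\delta_{j_i}(i))$ into general position; the perturbation is supported away from the fixed boundary since $H^0_i$ is already transverse to $\partial\tilde C_2(M_{[0,2]})$, and near a codimension-one face we perturb within that face first, then extend, so that transversality is achieved on all faces inductively on codimension. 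Since $\CS_n$ is finite and each $H_i$ has finitely many simplices, there are finitely many conditions $\ast(\Gamma,\delta_{j_i}(i))$; a finite intersection of open dense sets is open dense, so $\CM_{\ast}$ is dense (in fact a countable — here finite — intersection of open dense sets, hence residual, which for us is just open dense).

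\emph{Openness.} For a fixed choice of data, the set of configurations realizing the intersection $I(\Gamma,\{H_j\})$ over all faces is compact (it is a closed subset of the compact manifold-with-corners $C(M_{[0,4]},\Gamma)$, being an intersection of closed coorientable chains with prescribed compact pieces $H_i\cup \pi^{-1}(X_i)$). Transversality along a compact set is an open condition in the $C^\infty$ topology on $\CM$: each intersection point has a neighborhood in $C(M_{[0,4]},\Gamma)$ and a neighborhood in $\CM$ on which transversality persists, and compactness lets us take a finite subcover. Intersecting over the finitely many $\Gamma\in\CS_n$ preserves openness.

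\emph{The main obstacle} I expect is the simultaneous-transversality bookkeeping: a single perturbation of $H_i$ must make $p(\Gamma,i)^{-1}(\delta_{j_i}(i))$ transverse for \emph{all} $\Gamma$ at once and across \emph{all} faces of each $C(M_{[0,4]},\Gamma)$, including the boundary faces where $\pi^{-1}(X_i)$ enters as the extra ``simplex $\Delta_0(i)$'' with constraint map $\overline\pi$. One must check that perturbing $H_i$ in $C_2(M_{[0,2]})$ genuinely moves the relevant pieces of $H_i(\Gamma)=p(\Gamma,i)^{-1}(H_i\cup k\delta(t)\pi^{-1}(X_i))$ transversally inside each stratum — this uses that $p(\Gamma,i)$ is a submersion on the interior and restricts to a submersion (or at least a map for which parametric transversality applies) on each face, together with the regularity of $(X_1,\dots,X_{3n})$ from Definition~\ref{defuplegen}, which by Lemma~\ref{lemintboun} already removes the troublesome small-diagonal face. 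Once these stratum-by-stratum submersivity facts are in hand, the argument is the standard Thom–Abraham parametric transversality machine applied finitely many times; the care is entirely in verifying the hypotheses at the corners, and in confirming that the fixed data (face identifications, $\partial H_i$, local coefficients) are compatible with the perturbations so that the resulting $(H_1,\dots,H_{3n})\in\CM_{\ast}$ is still a legitimate tuple of chains defining the invariant. Finally, since $\CM$ is non-empty (the $H^0_i$ exist by elementary obstruction theory, the prescribed boundaries being null-homologous after multiplication by $\delta$ and $k$, as guaranteed by Proposition~\ref{propdefbord} and Lemma~\ref{lemhomTCM}), density of $\CM_{\ast}$ yields a tuple in general $3n$-position.
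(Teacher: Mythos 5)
Your proposal follows the same strategy as the paper's proof: reduce to the finitely many conditions $\ast(\Gamma,\delta_{j_i}(i))$, argue that openness is straightforward, prove density via a Morse--Sard argument on the constraint maps $\prod_i f(\delta_{j_i}(i))\circ p(\Gamma,i)$, and invoke the regularity of $(X_1,\dots,X_{3n})$ to deal with the boundary strata where the fixed pieces cannot be perturbed. One slight misattribution: you locate the role of regularity in Lemma~\ref{lemintboun} (removal of the small-diagonal face), but the paper deploys regularity more centrally through Definition~\ref{defuplegen}, which guarantees that the intersection of the fixed pieces $p(\Gamma,i)^{-1}\bigl(\partial H_i \cup k\delta(t)\pi^{-1}(X_i)\bigr)$ is already transverse in the faces of $A_I$; it is this fact that allows the paper to perturb only the free interior simplices (first those away from the boundary, then those with a face in the boundary) and iterate inward, which is what you anticipate in spirit with your stratum-by-stratum induction.
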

\begin{proof} It is enough to prove that the subspace of $\CM$ made of the $(H_1, \dots, H_{3n})$ that satisfy $\ast(\Gamma,\delta_{j_i}(i))$ is open and dense for each $\Gamma \in \CS_n$, for any $3n$-tuple $(\delta_{j_i}(i) \subset H_i)_{i \in \{1,\dots,3n\}}$ of (generalized) simplices.

It is obviously open and we only need to prove that it is dense.

Consider the regions where the $\delta_{j_i}(i)_{i \in \{1,\dots,3n\}}$
live. The intersection of their preimages under the $p(\Gamma,i)$ determines an open subspace of $$A_I= \cap_{i\in I}p(\Gamma,i)^{-1}(C_2(M_{[0,4]})\setminus C_2(M_{[0,2[})) \cap \cap_{j\in J}p(\Gamma,j)^{-1}(C_2(M_{[0,2]}))$$ for two complementary subsets $I$ and $J$ of $\{1,2,\dots,3n\}$,
where the transversality of the $\delta_{j_i}(i)$ reads:
$(N_{i,j_i})_{i \in \{1,\dots,3n\}}$ is a regular value of the constraint map $\prod_{i=1}^{3n}{f(\delta_{j_i}(i))\circ p(\Gamma,i)}$.
According to the Morse-Sard theorem, the set of regular $(N_i)_{i \in \{1,\dots,3n\}}$ is dense. It is furthermore open.

Therefore, if none of the $\delta_{j_i}(i)$ touches the boundary, it is easy to slightly move the simplices by changing the preimage of $N_{i,j_i}$ to the preimage of some closed point. (This can be done by a global isotopy of $C_{2}(M_{[0,2]})$ supported near $\delta_{j_i}(i)$
that moves $\delta_{j_i}(i)$ to the preimage of some close point, when $j_i \neq 0$.)

In general, we have a set of simplices in the boundary, with indices in $I_b$, a set of simplices that don't douch the boundary with indices in $I_e$, and the other ones with indices in $I_t$ that have a maximal face $\CF_i$ in the boundary. (Here, the special simplex $\pi^{-1}(X_i)$ is considered in the boundary.)
Since $(\fvarM_1, \dots,\fvarM_{3n})$ is regular, the intersection of the $p(\Gamma,i)^{-1}(\partial H_i \cup k\delta(t) \pi^{-1}(X_i))$, for $i \in I_b \cup I_t$, is transverse in the faces of
$A_I$.
Therefore, it is a $(6n-2(\sharp I_b +\sharp I_t)-c)$--submanifold of the codimension $c$ faces of $A_I$.
The intersection of the $\delta_{j_i}(i)$, for $i \in I_e$ will be transverse to this manifold if the restriction of
$\prod_{i\in I_e}{f(\delta_{j_i}(i))\circ p(\Gamma,i)}$ to this manifold is regular, and this can be achieved as before.
Then the transversality holds in a neighborhood of the $\CF_i$, and we can make it hold for all the simplices by applying the above argument to $(I_b,I_e \cup I_t)$ instead of $(I_b \cup I_t,I_e)$, the openness allowing us to recorrect the $\delta_{j_i}(i)$ for $i \in I_t$ near the boundaries without losing transversality.
\end{proof}

\subsection{Proof of invariance}

In this section, we prove that we have defined
a topological invariant of $(M,\KK,\tau)$.
We first prove that our construction yields a well-defined function $\tilde{z}_n$ of $((M,M_{[1,8]}),\KK,\tau,\funcM,\varepsilon,(K_i)_i)$, by showing that our definition is independent of $(X_1, \dots, X_{3n})$ and $(H_1,\dots,H_{3n})$, i.e. that it is independent of $(H_1,\dots,H_{3n})$ when $(X_1, \dots, X_{3n})$
is not fixed.

Since the invariance is obvious when $(H_1,\dots,H_{3n})$ moves in a neighborhood of $(H_1,\dots,H_{3n})$ in general $3n$-position, it is enough to prove invariance when $H_1$ is changed to some $H^{\prime}_1$
such that $(H^{\prime}_1, H_2, \dots, H_{3n})$ is in general $3n$-position.

First pick a $C^{\infty}$-path $P(X_1,X^{\prime}_1)$ from $X_1$ to $X^{\prime}_1$ in $S^2_H$ whose image is some codimension $0$ submanifold with boundary of a submanifold defined by $f_P(x)=0$ for some smooth map $f_P$ that is defined in a regular neighborhood of the image of this path and valued in $[-1,1]$.
Without loss assume that $(0,X_2,X_3,\dots, X_{3n})$ is a regular value for all the compositions of the maps $g(\Gamma,L,B)$ of Section~\ref{subuplegen} by $f_P$ on the first coordinate.
(If it is not the case, use the Morse-Sard theorem to find a close regular value and move everything slightly.)

\begin{lemma}
There exists a $5$-chain $C_0$ of $\tilde{C}_2(M_{[0,2]})$ in general $3n$-position with $H_2$,
\dots, $H_{3n}$ such that $$\partial C_0 = H^{\prime}_1-H_1 + k\delta(t)\left(s_{\tau}(M_{[0,2]};P(X_1,X^{\prime}_1)) - 
\pi_{|\partial \tilde{C}_2(M_{[0,2]}) \setminus \partial \tilde{C}_2(M_{[0,2[})}^{-1}(P(X_1,X^{\prime}_1))\right).$$
\end{lemma}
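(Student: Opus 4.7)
The plan is to prove the lemma in three stages: first verify that the prescribed expression for $\partial C_0$ is a closed chain, then exhibit a preliminary filling by a family/homological argument, and finally perturb it into general $3n$-position using the transversality machinery of Subsection~\ref{subtrans}.

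First, I would check that the right-hand side of the equation is a $4$--cycle. The hypothesis $P(X_1,X_1')\subset S^2_H$ combined with orientation conventions gives
$$\partial s_\tau(M_{[0,2]};P(X_1,X_1'))= s_\tau(\partial M_{[0,2]};P(X_1,X_1'))- (s_\tau(M_{[0,2]};X_1')-s_\tau(M_{[0,2]};X_1))$$
and an analogous formula for $\pi^{-1}_{|\partial\tilde{C}_2(M_{[0,2]})\setminus\partial\tilde{C}_2(M_{[0,2[})}(P(X_1,X_1'))$, whose boundary contributes the endpoint pieces $\pi^{-1}(X_1')-\pi^{-1}(X_1)$ together with a corner term lying in the face $ST(M_{[0,2]})\cap \partial\tilde{C}_2(M_{[0,2]})$. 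By construction of $\pi$ in Subsection~\ref{subdefbord}, the restriction of $\pi$ to the diagonal face coincides with the unit tangent map read in $\tau$, so this corner contribution equals $s_\tau(\partial M_{[0,2]};P(X_1,X_1'))$ and cancels against the corresponding $s_\tau$ term. Combined with the known formulas for $\partial H_1$ and $\partial H_1'$, in which the $(-J_\Delta)ST(M)|_{K_1}$ parts coincide and cancel in the difference, a direct bookkeeping shows that $\partial(\text{RHS})=0$.

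Second, I would exhibit a preliminary $5$--chain $\widetilde C_0$ with $\partial\widetilde C_0$ equal to this RHS. My approach is a fiberwise construction: for each $s\in P(X_1,X_1')$ choose a $4$--chain $H_1(s)$ with $H_1(X_1)=H_1$, $H_1(X_1')=H_1'$, and
$$\partial H_1(s)=k\delta(t)\left(\pi^{-1}(s)\cup s_\tau(M_{[0,2]};s)\cup(-J_\Delta)ST(M)|_{K_1}\right),$$
and set $\widetilde C_0=\bigcup_{s\in P(X_1,X_1')}H_1(s)$. The obstruction to connecting $H_1$ to $H_1'$ by such a family is the relative homology class in $H_4(\tilde C_2(M_{[0,2]}),\partial;\QQ[t^{\pm 1}])$ represented by our RHS; that this class vanishes follows from the analogue of Lemma~\ref{lemhomTCM} for $\tilde{C}_2(M_{[0,2]})$, combined with the fact that both terms on the RHS are $(\theta-1)\delta(\theta)$--divisible along the boundary, exactly as in the classification of admissible $4$--chains used to define $\CQ(M,\KK)$ and in Proposition~\ref{propdefbord}.

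Third, I would put $\widetilde C_0$ into general $3n$-position with $(H_2,\ldots,H_{3n})$ while keeping its boundary fixed. Triangulate $\widetilde C_0$ as in Subsection~\ref{subtrans} and consider the space $\CM(\widetilde C_0)$ of smooth deformations preserving $\partial\widetilde C_0$ and the face identifications. For each $\Gamma\in\CS_n$ and each admissible tuple of simplices $(\delta_{j_i}(i))_{i=1}^{3n}$ containing a simplex of $\widetilde C_0$ in position $i=1$, the Morse--Sard theorem applied to the restriction of the constraint maps $\prod_{i=1}^{3n}f(\delta_{j_i}(i))\circ p(\Gamma,i)$ to $\CM(\widetilde C_0)$ yields an open dense subset of deformations realizing transverse intersection. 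Since $\CS_n$ is finite, a finite intersection of such dense opens remains dense, and a simultaneous perturbation yields the desired $C_0$.

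The main obstacle is the second step: producing a family $(H_1(s))_{s\in P(X_1,X_1')}$ with the correct boundary condition at every intermediate $s$. This amounts to showing that the relative $4$--cycle described above bounds, which depends on a careful extension of the homological framework of Lemma~\ref{lemhomTCM} to $\tilde{C}_2(M_{[0,2]})$ together with the compatibility of $\pi$ and $\tau$ on the common face that permits the endpoint terms and the corner terms to be packaged consistently.
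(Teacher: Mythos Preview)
Your steps 1 and 3 are fine and match what the paper does implicitly. The issue is step 2, which is both an unnecessary detour and contains a conceptual slip.

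The paper's entire proof is one sentence: the right-hand side is a $4$-cycle, and $H_4(C_2(M_{[0,2]});\QQ[t^{\pm 1}])=0$. Once your step 1 has established that the RHS is an \emph{absolute} $4$-cycle in $\tilde C_2(M_{[0,2]})$, the vanishing of absolute $H_4$ immediately produces a bounding $5$-chain. No fiberwise family $(H_1(s))_s$ is needed, and you should delete that construction entirely.

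Your detour also misidentifies the obstruction: you place it in the \emph{relative} group $H_4(\tilde C_2(M_{[0,2]}),\partial;\QQ[t^{\pm 1}])$, but that group is nonzero (it is Lefschetz-dual to $H^2$, and $ST(M)_{|\ast}$ survives), so appealing to it does not conclude. The relevant group is absolute $H_4$, and the RHS already lives there after step~1. Likewise, the appeal to ``$(\theta-1)\delta(\theta)$-divisibility along the boundary'' and to an ``analogue of Lemma~\ref{lemhomTCM}'' is beside the point. Lemma~\ref{lemhomTCM} concerns $C_2(M)$, where $H_4$ is generated by $ST(M)_{|S}$ for a \emph{closed} surface $S$; passing to the knot exterior $M_{[0,2]}$ kills this class, giving $H_4(C_2(M_{[0,2]});\QQ[t^{\pm 1}])=0$ outright, with no divisibility argument required.

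In short: replace your step 2 by the single observation that absolute $H_4$ vanishes for $C_2(M_{[0,2]})$, and the proof becomes correct and matches the paper.
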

\begin{proof} The wanted boundary $\partial C_0$ is a $4$-cycle and $H_4(C_2(M_{[0,2]});\QQ[t^{\pm 1}])=0$.
\end{proof}

Here the notion of general $3n$-position is similar to the previous notion and can be achieved in the same way. Let $C=C_0 \cup k\delta(t) \pi^{-1}(P(X_1,X^{\prime}_1))$.

The intersection of $p(\Gamma,1)^{-1}(C)$ and the $H_i(\Gamma)$ for $i \geq 2$ is some $C^{\infty}$ compact $1$-manifold with boundary in $C(M,\Gamma)$.
$p(\Gamma,1)^{-1}(C)$ is locally seen as the zero locus of some real function except near $p(\Gamma,1)^{-1}(\partial C)$
where
some additional independent real function must be positive.
Therefore, 
$$\partial (p(\Gamma,1)^{-1}(C) \cap \bigcap_{i=2}^{3n}H_i(\Gamma))=$$
$$H^{\prime}_1(\Gamma) \cap \bigcap_{i=2}^{3n}H_i(\Gamma) - H_1(\Gamma) \cap \bigcap_{i=2}^{3n}H_i(\Gamma) +  p(\Gamma,1)^{-1}(C) \cap \partial C(M_{[0,4]},\Gamma) \cap \bigcap_{i=2}^{3n}H_i(\Gamma).$$

Let us check that the first sign is correct. The two other ones are similar.
The normal bundle of the one-manifold reads $N^1_C \oplus \bigoplus_{i=2}^{3n} N^i$ where $N^1_C$ is the normal bundle to $p(\Gamma,1)^{-1}(C)$ in $C(M,\Gamma)$.
Since $H^{\prime}_1$ is oriented as the boundary of $C$, $C$ is oriented as the outward normal $N_o$ to $C$, followed by the orientation of $H^{\prime}_1$ along $H^{\prime}_1$. Therefore the normal bundle of
$p(\Gamma,1)^{-1}(H^{\prime}_1)$ is oriented by $N^1_C \oplus N_o$.

Use the notation
$\langle \frac{1}{\hat{\delta}} H_1(\Gamma), \dots,\frac{1}{\hat{\delta}} H_{3n}(\Gamma)\rangle_{\CA,C(M,\Gamma)}$,
where $\hat{\delta}=k\delta(t)$,
to denote $I_{\Gamma}(\{H_i\})$, that was defined before the statement of Theorem~\ref{thmhighloop}.
With similar notation, set
$$I_{\Gamma}(C,\{H_i\})=- \langle \frac{1}{\hat{\delta}}p(\Gamma,1)^{-1}(C) \cap \partial C(M_{[0,4]},\Gamma) , \frac{1}{\hat{\delta}}H_2(\Gamma) ,\dots, \frac{1}{\hat{\delta}}H_{3n}(\Gamma) \rangle_{\CA,C(M,\Gamma)}.$$
Then $I_{\Gamma}(C,\{H_i\})$ is the variation of $I_{\Gamma}(\{H_i\})$ when $H_1$ is changed to $H^{\prime}_1$.
The intersection points in $p(\Gamma,1)^{-1}(C) \cap \partial C(M_{[0,4]},\Gamma) \cap H_2(\Gamma) \cap \dots \cap H_{3n}(\Gamma)$ only occur inside codimension $1$ faces of $C(M_{[0,4]},\Gamma)$.

\begin{lemma}
 All these intersection points project to $M_{[0,4[}^{2n}$ under $p(M,\Gamma)$.
\end{lemma}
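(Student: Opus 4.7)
The strategy is to show by dimension counting and the regular value property of Subsection~\ref{subuplegen} that no intersection point $P$ can have a vertex $v$ with $r(m_v) = 4$, by arguing that the expected dimension of the intersection on the face $\{r(m_v) = 4\}$ is negative.

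I would first observe that both $C_0$ and the chains $H_i$ lie inside $\tilde{C}_2(M_{[0,2]})$, so no configuration in them can have a coordinate with $r = 4$. Consequently, at a point $P$ with $r(m_v) = 4$, every edge $e(i)$ incident to $v$ must realize its constraint through the $\pi^{-1}$-piece of its chain (namely $\pi^{-1}(X_i)$ for $i \geq 2$ and $\pi^{-1}(P(X_1, X^{\prime}_1))$ for $i = 1$). Define $L \colon V(\Gamma) \to \{M_{[0,2]}, M_{[2,4]}\}$ by sending each vertex $w$ to $M_{[0,2]}$ if $r(m_w) \leq 2$ and to $M_{[2,4]}$ otherwise; let $B$ consist of the edges whose constraint is realized through their $H_i$-piece, and set $A = \{1, \dots, 3n\} \setminus B$. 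Then $L(v) = M_{[2,4]}$, every edge at $v$ lies in $A$, and $P$ belongs to the codimension-one face $\CF$ of $C(\Gamma, L, B)$ defined by $r(m_v) = 4$.

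By the regularity of the tuple $(X_1, \dots, X_{3n})$ from Definition~\ref{defuplegen}, together with the analogous regularity along $P(X_1, X^{\prime}_1)$ obtained from the $f_P$ hypothesis, the restricted map $g(\Gamma, L, B)|_{\CF \times (S^2)^B}$ is a submersion at $P$, and its preimage of the prescribed directional values is a submanifold of $\CF \times (S^2)^B$ of dimension $(6n - 1) + 2|B| - 6n = 2|B| - 1$. Intersecting further with the $|B|$ codimension-two pullback conditions $p(\Gamma, i)^{-1}(H_i)$ for $i \in B$ drops the expected dimension to $(2|B| - 1) - 2|B| = -1$, forcing emptiness in the transverse case and contradicting the existence of $P$. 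The case where a collision cluster $\Gamma_I$ is collapsed at a point with $r = 4$ only adds further codimension $3(|I| - 1)$ to $\CF$ and is handled identically, with an even more negative expected dimension.

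The main obstacle is establishing transversality of the $H_i$-constraints on the boundary face $\CF$, since general $3n$-position (Definition~\ref{defcyclegen}) only supplies transversality in the interior of $C(M_{[0,4]}, \Gamma)$. I would address this by extending the Morse--Sard argument of Subsection~\ref{subtrans}, requiring the transversality condition $\ast(\Gamma, \delta_{j_i}(i))$ to hold also on every boundary face of $C(M_{[0,4]}, \Gamma)$ arising from $r = 4$ and its incidences with the diagonal strata; the generalized simplex $\pi^{-1}(X_i)$, combined with the generic choice of the $X_i$, already supplies the needed constraint on $\CF$, and a direct variant of the density argument of Subsection~\ref{subtrans} then produces a dense open set of admissible $H_i$ for which face-transversality holds.
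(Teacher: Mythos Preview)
Your dimension count is off by one, and this is not a technicality: it is exactly the reason the paper needs a different argument.

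The intersection $p(\Gamma,1)^{-1}(C)\cap\bigcap_{i\geq 2}H_i(\Gamma)$ is a $1$--manifold in the interior of $C(M_{[0,4]},\Gamma)$, so on a codimension--$1$ boundary face such as $\{r(m_v)=4\}$ the expected dimension is $0$, not $-1$. Concretely, the constraint carried by edge $1$ is $p(\Gamma,1)\in C$, where $C$ is a $5$--chain (codimension $1$), and on the $\pi$--side the condition is $\overline{\pi}\circ p(\Gamma,1)\in P(X_1,X_1')$, again codimension $1$ in $S^2$. The regularity hypothesis you invoke is the $f_P$--composed one: $(0,X_2,\dots,X_{3n})$ is a regular value of $f_P\circ_1 g(\Gamma,L,B)$, whose target has dimension $6n-1$, not $6n$. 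It does \emph{not} follow that the original $g$ is a submersion at $P$ onto $(S^2)^{3n}$; you only get surjectivity of $Df_P\circ Dg_1$ onto $\RR$, not of $Dg_1$ onto $T S^2$. With the correct target dimension your count yields $(6n-1+2|B|)-(6n-1)-2|B|=0$, and nothing is excluded. No strengthening of the general--position hypotheses on the $H_i$ can repair this, because the issue already arises when $B=\emptyset$.

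The paper closes this one--dimensional gap by a geometric mechanism you did not use: it isolates a subgraph $G$ of vertices with $r>2$ containing $v$, observes that the vertical $S^1$--translation acts on $C_{V(G)}(M_{[2,4]})$ and that all the edge--direction constraints are $S^1$--invariant, and then counts constraints against $\dim C_{V(G)}(M_{[2,4]})/S^1$. Passing to the $S^1$--quotient is what buys the missing dimension. When no vertex of $G$ meets $e(1)$ this gives an immediate contradiction; when one does, the constraint map becomes a local diffeomorphism on the quotient, so preimages are isolated $S^1$--orbits, which are interior circles and therefore satisfy $r(m_\ell)<4$. That last step---interiorness of the orbits---is the actual content of the lemma, and it has no counterpart in your outline.
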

\begin{proof}
Consider the projection $(m_i)_{i=1,2,\dots,2n}$ of such an intersection point.
Assume that $r(m_{\ell})>4-\varepsilon$.
Construct a subgraph $G$ of $\Gamma$ such that the edges of $G$ have an orientation independent of the edge orientation in $\Gamma$, so that
$G$ is maximal with the properties:
$G$ contains $v(\ell)$, there is an increasing path (following edge orientations in $G$) from $v(\ell)$ to every other vertex of $G$, if an edge $e(a)$ of $G$ goes from $v(i)$ to $v(j)$ then
$r(m_j)>r(m_i)-2\varepsilon$.

Such a graph $G$ can be constructed inductively starting from $v(\ell)$:
For each constructed vertex $v(i)$ with less than three incoming edges, add each other edge of $\Gamma$ adjacent to $v(i)$ whose other end $v(j)$ satisfies $r(m_j)>r(m_i)-2\varepsilon$ and orient it from $v(i)$ to $v(j)$.

For every $v(i) \in G$, $r(m_{i})>2$.

In $G$, the valency of a vertex that does not belong to $e(1)$ is $2$ or $3$ while the valency of a vertex that belongs to $e(1)$ is at least $1$.
Indeed, if the edge $e(a\neq 1)=\{v(i),v(j)\}$ of $\Gamma$ is not in $G$, and if $v(i) \in G$, then $r(m_j)\leq r(m_i)-2\varepsilon$, and $\overline{\pi}(m_i,m_j)=\pm X_a$ so that $m_i \in P_V(X_a)$.
Therefore, since $(X_a)_a$ is regular, we cannot erase more than one edge different from $e(1)$ that contains $v(i)$ from $\Gamma$.

Now, look at the constraints we know for the elements $m_j \in M_{[2,4]}$ for which $v(j) \in G$. They read $\overline{\pi}(m_i,m_j)=\pm X_b$ for internal edges $e(b \neq 1)=\{v(i),v(j)\}$ of $G$, and $m_j \in P_V(X_a)$ for other edges $e(a \neq 1)$ of $\Gamma$ that contain $v(j)$.
The vertical translation induces an action of $S^1$ on the space $C_{V(G)}(M_{[2,4]})$ of configurations of the vertices of $G$ in $M_{[2,4]}$, and the constraint maps are invariant under this action. Then if no vertex of $G$ belongs to $e(1)$, the product of the above elementary constraints maps, that only depend on the restriction of the configuration to $V(G)$, goes to a space whose dimension is the number of half-edges of $\Gamma$ that contain an element of $G$, that is the dimension of $M_{[2,4]}^{V(G)}$, that is greater than the dimension of $C_{V(G)}(M_{[2,4]})/S^1$. 

When a vertex of $G$ belongs to $e(1)$, and when the above map is a submersion, it becomes a local diffeomorphism on $C_{V(G)}(M_{[2,4]})/S^1$.
Then there are isolated preimages in $C_{V(G)}(M_{[2,4]})/S^1$ that provide isolated $S^1$-orbits of $C_{V(G)}(M_{[2,4]})$ that must lie in the interior of $C_{V(G)}(M_{[2,4]})$. Then $r(m_{\ell})<4$.
\end{proof}
Thus intersection points in $p(\Gamma,1)^{-1}(C) \cap \partial C(M_{[0,4]};\Gamma) \cap H_2(\Gamma) \cap \dots \cap H_{3n}(\Gamma)$ only occur inside codimension $1$ faces of $C(M,\Gamma)$.

Let us study some point $P$ in this intersection.
Its codimension one face corresponds to some connected
subgraph $\Gamma_I$ made of the vertices numbered in a set $I$ and the edges between two of these, as in Subsection~\ref{subcompconf}.
Define the graph $\Gamma(I)=\Gamma/\Gamma_I$ obtained from $\Gamma$ by identifying $\Gamma_I$ with a point $v(I)$. The vertices of $\Gamma(I)$ are the vertices of $\Gamma$ numbered in $\{1, \dots, 2n\} \setminus I$ and the additional vertex $v(I)$, and the edges of $\Gamma(I)$ are the edges of $\Gamma$ that are not edges of $\Gamma_I$. The set $E(\Gamma)$ of edges of $\Gamma$ is the disjoint union of the sets $E(\Gamma_I)$ and $E(\Gamma(I))$.

Our face $\CF(\Gamma,I)$ fibers over the configuration space of the vertices of $\Gamma(I)$ that form the set $V(\Gamma(I))$, and our intersection point projects to $M_{[0,4[}^{V(\Gamma(I))}$. The fiber is made of the infinitesimal configurations of $I$ up to translation and dilation.
The dimension of the fiber is $(3\sharp I -4)$ and 
the dimension of the base is $3(\sharp V(\Gamma(I)))= 6n - 3\sharp I +3$.

Since the intersection is transverse, the edges give $(6n-1)$ independent constraints on our intersection point $P$, where the edge numbered by $1$ gives one of these constraints and each of the other ones gives two constraints.

The constraints coming from the edges of $\Gamma(I)$ only concern the base, while the constraints of $E(\Gamma_I)$ only concern the fiber
except possibly for one edge $(e(j) \in E(\Gamma_I))$ where the constraint can read: 
``The point $v(I) \in K_j$.''

In this case, this constraint is a constraint for the base, and $j\neq 1$.
If we have such an exceptional constraint ``$v(I) \in K_j$'', set $\chi_k=1$. Otherwise, set $\chi_k=0$.

If the edge $e(1)$ belongs to $E(\Gamma_I)$, set $\chi_A=1$, and if it belongs to 
$E(\Gamma(I))$, set $\chi_A=0$.

Then we have $2 \sharp E(\Gamma_I) - 2\chi_k - \chi_A$ constraints on the $(3\sharp I -4)$--dimensional fiber.

The transversality condition tells us that 
$2 \sharp E(\Gamma_I) - 2\chi_k - \chi_A = 3\sharp I - 4$
that is $$4 -2\chi_k-\chi_A=3\sharp I-2\sharp E(\Gamma_I)$$
where the right-hand side is the valency of the vertex $v(I)$ in $\Gamma(I)$ as a count of half-edges shows.
This valency must be 4, 3, 2 or 1.

The transversality also tells us that the map ``direction of the edges'' from the $(3\sharp I -4)$--dimensional fiber to the product of the images
of the constraint functions must be a local diffeomorphism.

In particular, unless $\sharp I=2$, there can be no vertex of valency $1$ in $\Gamma_I$ because moving this vertex in the direction
of the unique edge incident to such a vertex would not change the image of the above map.

Thus, either $\Gamma_I$ is an edge, and this case will be treated by the IHX (or Jacobi) identification,
or $\Gamma_I$ is connected, all the vertices of $\Gamma_I$ have valency
$2$ or $3$ and there are $\chi$ vertices of valency $2$ where
$1 \leq \chi \leq 4$, and this case will be treated by the parallelogram identification.

Let us explain how these classical identifications allow us to prove that the sum of the contributions of all such intersection points to 
$$\CV(C,\{H_i\})=\sum_{\Gamma \in \CS_n} I_{\Gamma}(C,\{H_i\})$$ vanishes in our setting. We start with the parallelogram case.

\begin{lemma}
Let $E$ be the set of codimension one faces $\CF(\Gamma,I)$ of spaces $C(M_{[0,4[},\Gamma)$ indexed by
some $(\Gamma,I)$ where $\Gamma \in \CS_n$, $I$ is a subset of $\{1,2,\dots,2n\}$, $\Gamma_I$ is connected, the vertices of $\Gamma_I$ have valency
$2$ or $3$ and at least one vertex of $\Gamma_I$ has valency $2$. 
There is an involution $\hat{\rho}$ of $E$
without fixed point
such that the sum of the contributions of the intersection points in two faces $\CF_1$ and $\hat{\rho}(\CF_1)$ to
$\CV(C,\{H_i\})$ vanishes.
\end{lemma}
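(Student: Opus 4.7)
The plan is to realise $\hat{\rho}$ through the classical parallelogram identification, adapted to the equivariant beaded setting. To each $\CF(\Gamma,I)\in E$ I attach a canonical bivalent vertex $v_0=v_0(\Gamma,I)$, say the one of smallest index in $I$. Let $e_a,e_b$ be the two edges of $\Gamma_I$ incident to $v_0$ with $a<b$, going respectively to $u_1$ and $u_2$, and let $e_c$ be its unique external half-edge in $\Gamma$. On the fibre of $\CF(\Gamma,I)$ over $(m_j)_{j\in V(\Gamma(I))}$, transversality (Definition~\ref{defcyclegen}) together with the regularity of $(X_i)_i$ (Definition~\ref{defuplegen}) forces the infinitesimal position of $v_0$ in $T_{v(I)}M\cong\RR^3$ to be the unique point lying on the line through $u_1$ of direction $X_a$ and simultaneously on the line through $u_2$ of direction $X_b$. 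Define $\hat{\rho}(\CF(\Gamma,I))=\CF(\Gamma',I)$, where $\Gamma'$ is obtained from $\Gamma$ by swapping the labels $a\leftrightarrow b$ of $e_a,e_b$ and simultaneously reversing both of their orientations. Because $v_0$ is determined intrinsically by $(\Gamma,I)$, the same vertex is selected in $(\Gamma',I)$, so $\hat{\rho}^2=\mathrm{Id}$; since $a\neq b$, $\Gamma'\neq\Gamma$ in $\CS_n$, and $\hat{\rho}$ has no fixed point.

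Next I match intersection points. For $m\in I(\Gamma,\{H_j\})\cap\CF(\Gamma,I)$ let $\rho(m)$ be the point obtained from $m$ by replacing the infinitesimal position of $v_0$ by its central symmetric $v_0'=u_1+u_2-v_0$. A direct computation shows that the direction from $v_0'$ to $u_1$ equals $-X_b$ and the direction from $v_0'$ to $u_2$ equals $-X_a$, which are exactly the new direction constraints imposed by the swapped, reversed labelling of $\Gamma'$; all other direction constraints, including the one attached to $e_c$ at $v_0$, are untouched. Hence $\rho$ is a bijection between the intersection points lying in $\CF(\Gamma,I)$ and those lying in $\CF(\Gamma',I)$. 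Since the macroscopic configuration and the lifts of the chains are unchanged, the path-class exponents $n(m,e,\gamma)$ are the same on every edge, so the beads on the two reversed edges of $\Gamma'(\rho(m))$ are obtained from those on the corresponding edges of $\Gamma(m)$ by the substitution $t^{-n}/\hat{\delta}(t)\mapsto t^{n}/\hat{\delta}(t)$, where $\hat{\delta}=k\delta$.

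To conclude cancellation, apply Relation~1 of $\CA^h_n(\delta)$ to reverse the orientations of both modified edges of $\Gamma'(\rho(m))$: using $\delta(t)=\delta(t^{-1})$, their beads become $t^{-n_b}/\hat{\delta}(t)$ and $t^{-n_a}/\hat{\delta}(t)$, which together with the label-swap at $v_0$ (invisible in $\CA^h_n(\delta)$) identifies $\Gamma'(\rho(m))$ with $\Gamma(m)$ as beaded graphs. Since swapping two adjacent edges and reversing both is an even permutation of the half-edges at $v_0$, vertex orientations are preserved, so $[\Gamma'(\rho(m))]=[\Gamma(m)]$. On the other hand, the central symmetry $v_0\mapsto u_1+u_2-v_0$ acts on the three fibre directions encoding the position of $v_0$ by $-\mathrm{Id}$, contributing the overall sign $(-1)^3=-1$. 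Therefore $\varepsilon(\rho(m))=-\varepsilon(m)$, and the contributions $\varepsilon(m)[\Gamma(m)]+\varepsilon(\rho(m))[\Gamma'(\rho(m))]=0$ cancel point by point, hence in the sum forming $\CV(C,\{H_i\})$.

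The main obstacle I anticipate is the boundary case where the smallest-index bivalent vertex $v_0$ is incident to the distinguished edge $e(1)$. The constraint carried by $e(1)$ is no longer a point of $S^2$ but the one-parameter path $P(X_1,X_1')\subset S^2_H$, and the central symmetry sends a direction in $P$ to its antipode, which lies outside $S^2_H$; the parallelogram map then fails to land inside the constraint set of $\hat{\rho}(\CF(\Gamma,I))$. The resolution is twofold: select $v_0$ so as to avoid $e(1)$ whenever some other bivalent vertex of $\Gamma_I$ exists, and rule out by a dimension count, in the spirit of Lemma~\ref{lemintboun}, the exceptional faces in which every bivalent vertex of $\Gamma_I$ is forced to meet $e(1)$.
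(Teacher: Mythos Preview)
Your parallelogram realisation is close to the paper's, but there is a genuine gap: you never address the pieces $(-J_{\Delta})ST(M)_{|K_i}$ in $\partial H_i$. You assert that the fibre constraint at $v_0$ coming from $e_a$ is ``direction $=X_a$'', but this is only true when $p(\Gamma,a)(m)$ lands in the $s_\tau(M;X_a)$ piece of $\partial H_a$. If instead it lands in $ST(M)_{|K_a}$, the constraint becomes ``$m_{v(I)}\in K_a$'' (a base constraint) together with no restriction on the fibre direction, and the edge carries the nontrivial bead $-J_\Delta(t)/\hat{\delta}(t)$ rather than $1/\hat{\delta}(t)$. Your central-symmetry argument and your bead-matching both break down in this case. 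The paper handles this in two steps: for $\sharp I>2$ it shows by a kernel argument (stretching the \emph{other} edge at $v_0$ along its known direction) that transversality forbids $e_a$ or $e_b$ from landing in $ST(M)_{|K}$, so both beads are indeed $1$; for $\sharp I=2$ it treats the remaining configuration $\begin{pspicture}[shift=-0.2](-.1,-.3)(1.6,.3)\psset{xunit=.4cm,yunit=.4cm}\psline{-*}(0,0)(.5,0)\psline{*-}(2.5,0)(3,0)\psccurve(.5,0)(1.5,-.7)(2.5,0)(1.5,.7)\end{pspicture}$ separately, using $\iota$ and Relation~1 together with $J_\Delta(t^{-1})=-J_\Delta(t)$ to match the $-J_\Delta$ beads after the antipodal flip.

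Two smaller remarks. First, your anticipated obstacle concerning $e(1)$ is a red herring: the paper's key identity is $p(\Gamma,j)=p(\rho(\Gamma,I),j)\circ\phi(\hat{\rho})$ for \emph{every} label $j$, so the path constraint $P(X_1,X_1')$ on $e(1)$ is carried along verbatim with no antipode appearing; no special selection rule for $v_0$ is needed. Second, your claim that $\hat{\rho}$ is fixed-point-free is not quite right with your ``swap labels and reverse both'' recipe: when $\sharp I=2$, $u_1=u_2$, and the two edges have opposite orientations, swap-and-reverse returns $\Gamma$. The paper's recipe (keep the labels but send $e(a)=\varepsilon(a)\overrightarrow{v(i)v(j)}$ to $\varepsilon(a)\overrightarrow{v(k)v(i)}$) avoids this, since it always reverses both edges relative to $v(i)$ and hence never fixes $\Gamma$.
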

\begin{proof} Fix $\Gamma$ and $I$ as above.
Let $i$ be the smallest element of $I$ such that the valency of $v(i)$ in $\Gamma_I$ is $2$. Define the labelled graph $\rho(\Gamma,I) \in \CS_n$ from $\Gamma$ by modifying the
two edges $e(a)$ and $e(b)$ incident to $v(i)$ in $\Gamma_I$ as follows:  if $e(a)$ reads $\varepsilon(a)\overrightarrow{v(i)v(j)}$ and if $e(b)$ reads $\varepsilon(b)\overrightarrow{v(i)v(k)}$ in $\Gamma$, then $e(a)$ reads $\varepsilon(a)\overrightarrow{v(k)v(i)}$ and $e(b)$ reads $\varepsilon(b)\overrightarrow{v(j)v(i)}$ in $\rho(\Gamma,I)$. Define $\hat{\rho}$ so that $$\hat{\rho}(\CF(\Gamma,I))=\CF(\rho(\Gamma,I),I).$$
Consider the orientation-reversing involutive diffeomorphism $\phi(\hat{\rho})$ from $\CF(\Gamma,I)$ to $\hat{\rho}(\CF(\Gamma,I))$ that maps a point $P$ to the point where the only changed piece of data is the position of $v(i)$ in the infinitesimal configuration of $I$ where $v(i)$ is mapped to its symmetric with respect to the middle of the two (possibly coinciding)
other ends of $e(a)$ and $e(b)$.
The standard properties of a parallelogram guarantee that for any edge label $j$, $p(\Gamma,j)=p(\rho(\Gamma,I),j) \circ \phi(\hat{\rho})$ on $\CF(\Gamma,I)$.

$$\begin{pspicture}[shift=-0.2](0,0)(4,1.5)
\psset{xunit=.6cm,yunit=.6cm}
\rput[b](5.5,2.1){$e_{\rho(\Gamma,I)}(b)$}
\rput[t](2.5,.4){$e_{\Gamma}(b)$}
\rput[l](5.6,1.1){$e_{\rho(\Gamma,I)}(a)$}
\rput[r](2.3,1.6){$e_{\Gamma}(a)$}
\rput[r](.9,.6){$v(i)$}
\rput[r](3.7,2.1){$v(j)$}
\rput[l](4.2,.4){$v(k)$}
\rput[l](7.2,2.3){$\phi(\hat{\rho})(v(i))$}
\psline[linestyle=dashed]{->}(4,.5)(5.4,1.2)
\psline[linestyle=dashed]{-}(5.4,1.2)(7,2)
\psline{-}(1,.5)(4,2)
\psline{->}(1,.5)(2.4,1.2)
\psline[linestyle=dashed]{*->}(4,2)(5.5,2)
\psline[linestyle=dashed]{-*}(5.5,2)(7,2)
\psline{-*}(1,.5)(4,.5)
\psline{*->}(1,.5)(2.5,.5)
\end{pspicture}$$
Consider a point $P$ of $\CF(\Gamma,I)$ that contributes to $I_{\Gamma}(C,\{H_i\})$.
Recall that the chains $C$ and $H_a \cup k\delta(t) \pi^{-1}(X_a)$ are transverse to $\partial C_2(M_{[0,4[})$. They are made of two kinds of pieces there: $(-J_{\Delta}) ST(M)_{|K_a}$ and 
$s_{\tau}(M;P(X_1,X^{\prime}_1))$
or $s_{\tau}(M;X_a)$. Since our assumptions imply that the intersection is transverse, if $\sharp I > 2$, $P$ cannot be in the parts $p(\Gamma,a)^{-1}(ST(M)_{|K_a})$ or $p(\Gamma,b)^{-1}(ST(M)_{|K_b})$ because in this case the constraint coming from the other edge adjacent to $P$ gives the direction of this other edge and stretching this other edge leads to a non trivial kernel of the constraint map.
Therefore, if $\sharp I > 2$, the two edges can be assumed to be weighted by $1=\frac{k\delta(t)}{k\delta(t)}$.
Thus, when a point $P$ as above contributes to the variation $I_{\Gamma}(C,\{H_i\})$, the point $\phi(\hat{\rho})(P)$ contributes with the same beaded graph with the opposite sign to $I_{\rho(\Gamma,I)}(C,\{H_i\})$.
(Indeed, the product of the constraint maps only differs by the orientation-reversing diffeomorphism $\phi(\hat{\rho})$ of the faces, and the graphs are obtained from one another by an exchange of two edge labels and a possible simultaneous reverse of two edge orientations, for two edges beaded by $1$.)
If $\sharp I=2$, our assumptions imply that $\Gamma_I$ is made of two edges $e(a)$ and $e(b)$ between the two elements $i$ and $j$ of $I$ like in the figure.
$$\begin{pspicture}[shift=-0.5](-.1,-.4)(2,.4)
\psset{xunit=.5cm,yunit=.5cm}
\psline{-*}(0,0)(.5,0)
\psline{*-}(2.5,0)(3,0)
\psecurve{-}(1.5,-.7)(.5,0)(1.5,.7)(2.5,0)(1.5,-.7)
\psecurve{-}(1.5,.7)(.5,0)(1.5,-.7)(2.5,0)(1.5,.7)
\psecurve{->}(1.5,.7)(.5,0)(1.5,-.7)(2.5,0)
\rput[b](1.5,-.5){\small $b$}
\rput[t](1.5,.5){\small $a$}
\rput[br](.4,.1){\small $i$}
\rput[bl](2.6,.1){\small $j$}
\end{pspicture}$$
In $\rho(\Gamma,I)$, the orientations of both edges are reversed. Up to exchanging $a$ and $b$, $P \in p(\Gamma,a)^{-1}(s_{\tau}(M;X_a))  \cap (-J_{\Delta}) p(\Gamma,b)^{-1}(ST(M)_{|K_b})$, and the rational functions are $\pm 1$ and $\pm J_{\Delta}$.

Let us show that we have the same cancellation as before in this case.
Let $K_b \times D_b$ denote a trivialised tubular neighborhood of $K_b$.
Then the local constraint map associated to $(-1/(J_{\Delta}\hat{\delta}))H_b$ near $ST(M)_{|K_b}$
can be viewed as a map $f^{\prime}_b\colon ST(M) \rightarrow D_b$ associated with the natural projection of $K_b \times D_b$ to the disk $D_b$.

Let $P^{\prime}$ be a transverse intersection point in $$p(\Gamma,a)^{-1}(s_{\tau}(M;X_a))  \cap p(\Gamma,b)^{-1}(ST(M)_{|K_b}).$$
Let $\eta$ be the local degree  of $(\prod_{e\neq b} f_e \circ p(\Gamma,e),f^{\prime}_b \circ p(\Gamma,b))_{|\CF(\Gamma,I)}$ at $P^{\prime}$, where the $f_e$ are local constraint maps associated to the $H_e\cup k\delta(t) \pi^{-1}(X_e)$. Then
$P^{\prime}$ contributes as $$\eta
\begin{pspicture}[shift=-0.5](-.1,-.6)(2.4,.6)
\psset{xunit=.7cm,yunit=.6cm}
\psline{-*}(0,0)(.5,0)
\psline{*-}(2.5,0)(3,0)
\psccurve(.5,0)(1.5,-.7)(2.5,0)(1.5,.7)
\psecurve{->}(1.5,.7)(.5,0)(1.5,-.7)(2.5,0)
\rput[b](1.5,-.5){\small $-J_{\Delta}(t)$}
\end{pspicture}.$$
As before, the local degree of 
$(\prod_{e\neq b} f_e \circ p(\rho(\Gamma,I),e),
f^{\prime}_b \circ p(\rho(\Gamma,I),b))_{|\CF(\rho(\Gamma,I),I)}$ at $\phi(\hat{\rho})(P^{\prime})$ is $(-\eta)$  since this map is the composition of the former one by $\phi(\hat{\rho})^{-1}$.

Let $\iota$ denote the orientation-reversing involution of $C_2(M)$ that extends the exchange of points in $(M^2\setminus \mbox{diag})$ and let $\iota_{S^2}$ denote the antipode of $S^2$.

Focus on the edges $e(a)$ and $e(b)$. Without loss, consider $\eta$
as the degree of 
$$f_a \times f^{\prime}_b \colon (ST(D_b)=_{\tau}S^2 \times D_b) \rightarrow S^2 \times D_b$$
where $(ST(M)=\partial C_2(M))$ and $ST(D_b)$ are oriented by the orientation of $e_{\Gamma}(b)$, and $f_a$ is the projection of $ST(D_b)$ on $S^2$ if the orientations of $e_{\Gamma}(a)$ and $e_{\Gamma}(b)$ coincide, and its composition by $\iota_{S^2}$ otherwise.

With these orientations and conventions, $e_{\rho(\Gamma,I)}(b)$ is equipped with 
$$-J_{\Delta}(t)\iota(ST(M)_{|K_b})=-J_{\Delta}(t^{-1})ST(M)_{|K_b}$$
while $(-\eta)$ is the degree of $(f_a\circ \iota) \times (f^{\prime}_b\circ \iota)=(\iota_{S^2}\circ f_a)\times f^{\prime}_b.$
Then $\phi(\hat{\rho})(P^{\prime})$ contributes as
$(-\eta) \begin{pspicture}[shift=-0.95](-.1,-1)(2,.6)
\psset{xunit=.6cm,yunit=.6cm}
\psline{-*}(0,0)(.5,0)
\psline{*-}(2.5,0)(3,0)
\psccurve(.5,0)(1.5,-.7)(2.5,0)(1.5,.7)
\psecurve{<-}(.5,0)(1.5,-.7)(2.5,0)(1.5,.7)
\rput[t](1.5,-.8){\small $-J_{\Delta}(t^{-1})$}
\end{pspicture}
=(-\eta)\begin{pspicture}[shift=-0.95](-.1,-1)(2.4,.6)
\psset{xunit=.7cm,yunit=.6cm}
\psline{-*}(0,0)(.5,0)
\psline{*-}(2.5,0)(3,0)
\psccurve(.5,0)(1.5,-.7)(2.5,0)(1.5,.7)
\psecurve{->}(1.5,.7)(.5,0)(1.5,-.7)(2.5,0)
\rput[b](1.5,-.5){\small $-J_{\Delta}(t)$}
\end{pspicture}$.
\end{proof}

\begin{lemma}
 The intersection points of the faces $\CF(\Gamma,I)$ where $\Gamma_I$ is an edge do not make the sum $\tilde{z}_n(M,\KK,\tau)$ vary, thanks to the IHX relation.
\end{lemma}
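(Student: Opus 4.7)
The plan is to group the faces $\CF(\Gamma,I)$ with $\Gamma_I$ a single edge by the quotient graph $\Gamma'=\Gamma/\Gamma_I$, whose distinguished vertex $v(I)=[\Gamma_I]$ has valency $4$. Specifying $\Gamma\in\CS_n$ with $\Gamma/\Gamma_I=\Gamma'$ amounts to choosing an unordered splitting of the four half-edges at $v(I)$ into two pairs, attached respectively to the two ends $v(i_1)$, $v(i_2)$ of the contracting edge. There are exactly three such splittings, so $\Gamma'$ is recovered from precisely three labelled trivalent graphs $\Gamma^{(1)}, \Gamma^{(2)}, \Gamma^{(3)}\in\CS_n$, related by the I, H, X local moves of the Jacobi relation. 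I will show that the contributions of matching triples of intersection points across the three faces $\CF(\Gamma^{(k)},I)$ cancel in $\CA^{h}_n(\delta)$.

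Each face $\CF(\Gamma^{(k)},I)$ is a bundle over the common base $C(M_{[0,4[},\Gamma')$ with fiber $S^2$ recording the direction in $T_{m_{v(I)}}M$ of the contracting edge $e(j_0)$. The constraints of the four external half-edges depend only on points of the base and therefore coincide for $k=1,2,3$. The remaining constraint, from $e(j_0)$, pins the fiber direction to $X_{j_0}$, since the piece of $\partial(H_{j_0}\cup k\delta(t)\pi^{-1}(X_{j_0}))$ sitting in $\partial C_2(M)=ST(M)$ is $k\delta(t)s_{\tau}(M;X_{j_0})$. Transverse intersection points in the three faces are therefore in canonical bijection. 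At a matched triple $(P_1,P_2,P_3)$, the beadings of the four external edges coincide for $k=1,2,3$ because they are determined by the common base intersection, while the beading of $e(j_0)$ is $1/(k\delta(t))$ in all three cases: the two endpoints of the contracting edge project to the same point of $M$, so the relative homology class of any path between them is trivial.

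It follows that the three contributions $\varepsilon(P_k)[\Gamma^{(k)}(P_k)]$ differ only in how the central edge $e(j_0)$ (beaded by $1/(k\delta(t))$) connects the four external half-edges, with all other data identical: this is exactly an IHX triple. By Relation 5 (Jacobi), combined with Relations 2 and 3 to accommodate the non-trivial central beading $1/(k\delta(t))$, one has $\sum_{k=1}^3 [\Gamma^{(k)}(P_k)]=0$ in $\CA^{h}_n(\delta)$, provided the signs $\varepsilon(P_k)$ all agree. Summing over matched triples and quotient graphs $\Gamma'$ then yields the claim. The main obstacle is the sign bookkeeping: one has to verify that the boundary orientation of $\CF(\Gamma^{(k)},I)$ (which splits as base times fiber $S^2$ via the $\tau$-identification of the unit normal bundle of $\Delta_I(M^{V(\Gamma)})$ with $ST(M)$), the coorientations of the chains $H_i(\Gamma^{(k)})$, and the vertex-orientation convention fixing the sign of $[\Gamma^{(k)}(P_k)]$ combine to give the same sign at each $P_k$, so that the IHX relation is applied with coefficients $(+1,+1,+1)$ rather than a twisted variant.
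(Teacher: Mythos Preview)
Your approach coincides with the paper's: group the edge-collapse faces into IHX triples indexed by the common quotient $\Gamma(I)$, match intersection points across the three faces, and cancel via the Jacobi relation.

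There is, however, a genuine gap. You assert that the piece of $\partial\bigl(H_{j_0}\cup k\delta(t)\pi^{-1}(X_{j_0})\bigr)$ lying in $ST(M)$ is $k\delta(t)\,s_{\tau}(M;X_{j_0})$. This is false: by the very definition of $H_{j_0}$, its boundary in $ST(M_{[0,2]})$ also contains the piece $-k\delta(t)J_{\Delta}(t)\,ST(M)_{|K_{j_0}}$. On that piece the constraint from $e(j_0)$ reads ``collapse point $m_{v(I)}\in K_{j_0}$'', a codimension-two condition on the \emph{base} that leaves the fibre $S^2$ entirely unconstrained; moreover the bead on $e(j_0)$ would then be $-J_{\Delta}(t)/(k\delta(t))$ rather than the trivial $1/(k\delta(t))$ your IHX cancellation requires. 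The paper addresses this explicitly: since $\Gamma_I$ consists of the single edge $e(j_0)$, no other edge constrains the fibre, so an intersection point in $p(\Gamma,j_0)^{-1}(ST(M)_{|K_{j_0}})$ would force the intersection to be at least two-dimensional, contradicting transversality. Hence such points do not occur, and only then is $e(j_0)$ beaded by $1$. You must include this step.

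On the sign question you flag but do not resolve: the paper fixes one external half-edge $e(a)$ at $v(i)$ and observes that the three graphs $\Gamma^b,\Gamma^c,\Gamma^d$ are related by the cyclic permutation of the roles of $b,c,d$. This is an even permutation of half-edge labels, so the vertex-orientation convention and the face orientations agree across the three, giving the common sign needed for IHX. (Your ``three unordered splittings'' is also slightly off: with $I=\{i,j\}$ and the orientation of $e(j_0)$ fixed there are $\binom{4}{2}=6$ labelled preimages of $\Gamma(I)$ in $\CS_n$, forming two IHX triples, not one.)
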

\begin{proof} Let $f$ be the label of the unique edge of $\Gamma_I$.
Like in the previous proof, notice that an intersection point $P$ in such a face cannot come
from the $p(\Gamma,f)^{-1}(ST(M)_{|K_f})$ part. In particular, the edge $e(f)=\Gamma_I$ is beaded by $1$.
Consider the graph $\Gamma(I)=\Gamma/\Gamma_I$. Its vertex $v(I)$ has $4$ incident edges $e(a)$, $e(b)$, $e(c)$, $e(d)$, and, with the natural identifications, in the initial graph $\Gamma$, $e(f)$ goes from $v(i)$ to $v(j)$
where $v(i)$ is incident to two edges among $e(a)$, $e(b)$, $e(c)$, $e(d)$ and $v(j)$ is incident to the two other ones. 
For $g=b$, $c$ or $d$, let $\Gamma^g$ be the graph of $\CS_n$ such that $\Gamma^g(I=\{i,j\})=\Gamma(I)$, and $v(i)$ is incident to $e(a)$ and $e(g)$.
It is enough to prove that the contributions of the possible intersection points of the three faces indexed by $(\Gamma^g,I)$ cancel in the sum $\CV(C,\{H_i\})$.
Our intersection point $P$ of $C(M,\Gamma)$ will give a transverse intersection point in the three faces, with a common sign, and $\Gamma^b$, $\Gamma^c$ and $\Gamma^d$ are oriented so that
their vertex orientations coincide outside the disk of the following picture, and they are given by the picture for the two vertices there.
Indeed the roles of the half-edges indexed by $b$, $c$ and $d$ are cyclically permuted.
$$\begin{pspicture}[shift=-0.2](-1,-.2)(1.6,1.2)
\psset{xunit=1.2cm,yunit=1.2cm}
\rput[l](-.9,.4){$\Gamma^b$:}
\rput[r](.05,1){$a$}
\rput[r](.2,0){$b$}
\rput[l](.8,0){$c$}
\rput[l](.55,1){$d$}
\rput[l](.55,.55){$v(j)$}
\psline{-*}(.1,1)(.35,.2)
\psline{*-}(.5,.5)(.5,1)
\psline{-}(.75,0)(.5,.5)
\psline{->}(.25,0)(.5,.5)
\end{pspicture}
\begin{pspicture}[shift=-0.2](-1.2,-.2)(1.6,1.2)
\psset{xunit=1.2cm,yunit=1.2cm}
\rput[l](-.9,.4){, $\Gamma^c$:}
\rput[r](.05,1){$a$}
\rput[r](.2,0){$b$}
\rput[l](.8,0){$c$}
\rput[l](.55,1){$d$}
\rput[l](.55,.65){$v(j)$}
\psline{*-}(.5,.6)(.5,1)
\psline{->}(.8,0)(.5,.6)
\psline{-}(.2,0)(.5,.6)
\pscurve[border=2pt]{-*}(.1,1)(.3,.3)(.7,.2)
\end{pspicture}
\begin{pspicture}[shift=-0.2](-1.2,-.2)(1.6,1.2)
\psset{xunit=1.2cm,yunit=1.2cm}
\rput[l](-.9,.4){and $\Gamma^d$:}
\rput[r](.05,1){$a$}
\rput[r](.2,0){$b$}
\rput[l](.8,0){$c$}
\rput[l](.55,1.05){$d$}
\rput[l](.55,.4){$v(j)$}
\psline{<-}(.5,.35)(.5,1)
\psline{-*}(.75,0)(.5,.35)
\psline{-}(.25,0)(.5,.35)
\pscurve[border=2pt]{-*}(.1,1)(.2,.75)(.7,.75)(.5,.85)
\end{pspicture}
$$
\end{proof}

Let us now prove that $\tilde{z}_n((M,M_{[1,8]}),\KK,\tau,\funcM,\varepsilon,(K_i)_i)$ does not depend on $\funcM$.

\begin{proof}
Let $\dfuncM \colon M \rightarrow S^1$ be a map that coincides with $\funcM$ on $M_{[1,8]}$. Then the graphs $G(\funcM)$ and $G(\dfuncM)$ of the restrictions of $\funcM$ and $\dfuncM$ to $M_{[0,1]}$ cobound a $4$-chain $\CfuncM$ in $M_{[0,1]}\times S^1$. $\partial \CfuncM =G(\dfuncM) -G(\funcM)$.
The intersection of $\pi(\funcM)^{-1}(X_i)$ with $M_{[0,1]} \times M_{[2,4]}$
reads $\{(m,(\projq(t,z_W(X_i)),z(t)z(X_i)\funcM(m))); m \in M_{[0,1]}, t \in [2,4]\}$.
Let $C(\CfuncM,X_i)=A(\CfuncM,X_i) - B(\CfuncM,X_i)$ where \\
$A(\CfuncM,X_i)=\{(m,(\projq(t,z_W(X_i)),z(t)z(X_i)c)); (m,c) \in \CfuncM, t \in [1.5,4]\}$, and\\
$B(\CfuncM,X_i) = \{((\projq(t,-z_W(X_i)),(z(t)z(X_i))^{-1}c),m); (m,c) \in \CfuncM, t \in [1.5,4]\}$ oriented by $(m,c,t)$.
Define $H^{\prime}_i$ so that $H^{\prime}_i=(H_i + \partial C(\CfuncM,X_i))\cap C_2(M_{[0,2]})$.
As before, we can assume that, for any $i \in \{1,\dots, 3n\}$, the $H^{\prime}_j(\Gamma)$ for $j<i$, $p(\Gamma,i)^{-1}(C(\CfuncM,X_i))$ and the $H_j(\Gamma)$ for $j>i$, are transverse. Then the sum over $\Gamma \in \CS_n$ of the variations
$$\langle \frac{1}{\hat{\delta}}H^{\prime}_1,\dots, \frac{1}{\hat{\delta}}H^{\prime}_{i-1},-\frac{1}{\hat{\delta}}p(\Gamma,i)^{-1}C(\CfuncM,X_i) \cap \partial C(M_{[0,4]},\Gamma) , \frac{1}{\hat{\delta}}H_{i+1} ,\dots, \frac{1}{\hat{\delta}}H_{3n} \rangle_{\CA}$$
(where $H^{\prime}_j$ and $H_j$ stand for $H^{\prime}_j(\Gamma)$ and $H_j(\Gamma)$)
vanishes as above for any $i \in \{1,\dots, 3n\}$, and $\tilde{z}_n$ does not depend on the choice of $\funcM$.
\end{proof}

Now, $\tilde{z}_n(M,\KK,\tau,\varepsilon,(K_i)_i)$ is a locally constant function of $(\varepsilon,(K_i)_i)$ that is therefore a topological invariant of $(M,\KK,\tau)$.

\subsection{The dependence on the trivialisation}

We now compute the variation of $\tilde{z}_n(M,\KK,\tau)$ under a change of trivialisation.
Let $\tau^{\prime}$ be a trivialisation that coincides with $\tau$ on the tubular neighborhood $M_{[1,8]}$ of $K$.
We want to compute the variation caused by the replacement of the  $H_i$ by chains $H^{\prime}_i$ of $C_2(M_{[0,2]})$ where
$\partial H^{\prime}_i = k\delta(t)(s_{\tau^{\prime}}(M;X_i) \cup \pi_{|\partial \tilde{C}_2(M_{[0,2]}) \setminus \partial \tilde{C}_2(M_{[0,2[})}^{-1}(X_i)-J_{\Delta}(t)ST(M)_{|K_i})$.
There exists a cobordism $c_i$ between $s_{\tau}(M_{[0,1]};X_i)$ and $s_{\tau^{\prime}}(M_{[0,1]};X_i)$ in $ST(M_{[0,1]})$ \cite[Lemma 5.10]{betaone}, and there is a $5$-chain $C_i$ of $C_2(M_{[0,2]})$ such that 
$$\partial C_i = H^{\prime}_i-H_i \cup  k\delta(t)c_i.$$
Then $$\tilde{z}_n(M,\KK,\tau^{\prime})-\tilde{z}_n(M,\KK,\tau)=\sum_{\Gamma \in \CS_n}\sum_{i=1}^{3n}\CV(\Gamma,i)$$ where $\CV(\Gamma,i)$ equals
$$\langle \frac{1}{\hat{\delta}}H^{\prime}_1,\dots, \frac{1}{\hat{\delta}}H^{\prime}_{i-1},-\frac{1}{\hat{\delta}}p(\Gamma,i)^{-1}(C_i) \cap \partial C(M_{[0,4]},\Gamma) , \frac{1}{\hat{\delta}}H_{i+1} ,\dots, \frac{1}{\hat{\delta}}H_{3n} \rangle_{\CA,C(M,\Gamma)}$$
like in the previous subsection. Now the proof of the previous subsection can be applied verbatim to cancel most of the contributions, except for the computation of the valency of $v(I)$ in $\Gamma(I)$, because if the tangent space of a point of $M$ is identified with $\RR^3$ via $\tau$,  constraints associated with edges of $\Gamma_I$ coming from chains $H^{\prime}_j$ or $C_i$ constrain both the fiber and the base. Therefore, the valency of $v(I)$ can be zero and this is the only case that is not covered by the cancellations of the previous subsection. In this case, $\Gamma_I=\Gamma$. Let $\CF(\Gamma)$ denote $\CF(\Gamma,\{1,\dots,2n\})$. Then $\CV(\Gamma,i)$ reads
$$\pm\langle s_{\tau^{\prime}}(M;X_1),\dots, s_{\tau^{\prime}}(M;X_{i-1}),p(\Gamma,i)^{-1}(c_i), s_{\tau}(M;X_{i+1}) ,\dots, s_{\tau}(M;X_{3n}) \rangle_{\CA,\CF(\Gamma)}.$$
In particular, $(\tilde{z}_n(M,\KK,\tau^{\prime})-\tilde{z}_n(M,\KK,\tau))$, that does not involve any coefficient or non-trivial bead, is the same as the variation in the case of the
universal finite type invariant of rational homology spheres, that is $(p_1(\tau)-p_1(\tau^{\prime}))\frac{\xi_n}{4}$ like in \cite[Section 1.6]{lesconst}.
This finishes the proof of Theorem~\ref{thmhighloop}.

\section*{Appendix: A configuration space definition of the Casson-Walker invariant}
\label{subdefcas}

For $r\in \RR$, let $B(r)$ denote the ball of radius $r$ in $\RR^3$
that is equipped with its standard parallelisation $\tau_s$.
A rational homology sphere $N$ may be written as $B_N \cup_{B(1) \setminus \mathring{B}(1/2)} B^3$ where
$B_N$ is a {\em rational homology ball\/}, that is a connected compact (oriented) smooth $3$--manifold with boundary $S^2$ with the same rational homology as a point, $B^3$ is a $3$-ball, $B_N$ contains $(B(1) \setminus \mathring{B}(1/2))$ as a neighborhood of its boundary $\partial B_N=\partial B(1)$, and $B^3$ contains $(B(1) \setminus\mathring{B}(1/2))$ as a neighborhood of its boundary $\partial B^3=-\partial B(1/2)$.
Let $B(N)=B_N(3)$ be obtained from $B(3)$ by replacing the unit ball $B(1)$ of $\RR^3$ by $B_N$. Equip $B(N)$ with a trivialisation $\tau_N$ that coincides with $\tau_s$ outside $B_N$.

Let $W$ be a compact connected $4$-manifold with signature $0$ and with boundary 
$$\partial W =B_N(3) \cup_{\{1\} \times \partial B(3)} \left(- [0,1] \times \partial B(3) \right) \cup_{\{0\} \times \partial B(3)} (-B(3)).$$
Define $p_1(\tau_N) \in (H^4(W,\partial W;\pi_3(SU(4))) = \ZZ)$ as the obstruction to extend the trivialisation of $TW \otimes \CC$
induced by $\tau_s$ and $\tau_N$ on $\partial W$ to $W$. 
Again, we use the notation and conventions of \cite{milnorsta}, see also \cite[Section 1.5]{lesconst}.

Let $\RR^3(N)$ be obtained from $\RR^3$ by replacing its unit ball $B(1)$ by $B_N$. Let $C_2(\RR^3(N))$ be obtained from $\RR^3(N)^2$ by blowing up the diagonal as in Definition~\ref{defblowup}. Let $P \colon C_2(\RR^3(N)) \rightarrow \RR^3(N)^2$ be the associated canonical projection and let $C_2(B(N))=P^{-1}(B(N)^2)$.
Consider a smooth map 
$ \chi \colon \RR \rightarrow [0,1]$ that maps $]-\infty,-2]$ to $0$ and
$[-1,\infty[$ to $1$.
Define $$\begin{array}{llll}p_{B(3)} \colon &B(3)^2\setminus \mbox{diagonal}& \rightarrow &S^2\\
&(U,V) &\mapsto &\frac{\chi(\parallel V\parallel-\parallel U \parallel)V-\chi(\parallel U\parallel-\parallel V \parallel)U}{\parallel \chi(\parallel V\parallel-\parallel U \parallel)V-\chi(\parallel U\parallel-\parallel V \parallel)U \parallel} \end{array}
$$
This map extends to $C_2(B(3))$ to a map still denoted by $p_{B(3)}$, that reads as the projection to $S^2$ induced by $\tau_s$ (see Subsection~\ref{subconst}) on the unit tangent bundle of $B(3)$.
A similar map $p_N$ can be defined on the boundary $\partial C_2(B(N))$: The map $p_N$ is the projection to $S^2$ induced by $\tau_N$ on the unit tangent bundle of $B(N)$, and the map $p_N$ is 
given by the above formula, where we set $\parallel U\parallel=1$ when $U \in B_N$, for the other points of the boundary that are pairs
$(U,V)$ of $\left(B(N)^2\setminus \mbox{diagonal}\right)$ where $U$ or $V$ belongs to $\partial B(3)$ (therefore a possible point of $B_N$ is replaced by $0 \in \RR^3$ in the formula).

The following theorem, that gives a configuration space definition for the Casson-Walker invariant, is due to Kuperberg and Thurston \cite{kt} for the case of integral homology spheres (though it is stated in other words). It has been generalised to rational homology spheres in \cite[Section 6]{sumgen}.

\begin{theorem}
\label{thmdefcasconf}
Let $\fvarM$, $\svarM$ and $\tvarM$ be three distinct points of $S^2$.
Under the above assumptions, for $\cvarM=\fvarM$, $\svarM$ or $\tvarM$, the submanifold $p_N^{-1}(\cvarM)$ of $\partial C_2(B(N))$ bounds a rational chain $F_{N,\cvarM}$
in $C_2(B(N))$, and
$$\lambda(N)=\frac{\langle F_{N,\fvarM},F_{N,\svarM}, F_{N,\tvarM}\rangle_{C_2(B(N))}}{6} -\frac{p_1(\tau_N)}{24}.$$
\end{theorem}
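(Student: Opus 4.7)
The plan is to proceed in four stages: existence of the bounding chains, well-definedness of the triple intersection, invariance under the choice of trivialisation, and finally identification with $\lambda(N)$.

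First, I would verify that $p_N^{-1}(\fvarM)$ is a null-homologous $3$-cycle in $C_2(B(N))$, so that a chain $F_{N,\fvarM}$ with the stated boundary exists. The space $B(N)$ is obtained from $B(3)$ by substituting a rational homology ball for the unit ball, hence $H_\ast(B(N);\QQ)=H_\ast(\mbox{point};\QQ)$ in the relevant range. Using a Leray-type argument or the standard spectral sequence associated with the projection $C_2(B(N))\to B(N)^2$ (combined with the structure of the boundary and a relative version of Lemma~\ref{lemhomTCM} adapted to this non-equivariant setting), one obtains that $H_3(C_2(B(N));\QQ)$ is generated by the class of the unit tangent bundle fibre $ST(B(N))_{|\ast}$. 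The restriction of $p_N$ to such a fibre has degree $1$, so $p_N^{-1}(\fvarM)$ is homologous to a fibre $ST(B(N))_{|\ast}$, which in turn bounds rationally because $B(N)$ is rationally acyclic; equivalently, for a small neighbourhood $V$ of a point, the restriction of the unit tangent bundle projection to $C_2(V)$ gives an explicit bounding $4$-chain that extends.

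Next, I would establish that $\langle F_{N,\fvarM},F_{N,\svarM},F_{N,\tvarM}\rangle_{C_2(B(N))}$ is well defined and independent of the auxiliary choices. Since any two candidate chains with the same boundary differ by a $4$-cycle, it suffices to show that every such $4$-cycle has trivial algebraic intersection with the transverse $2$-dimensional intersection $F_{N,\svarM}\cap F_{N,\tvarM}$; this follows by a standard duality argument since the class of $F_{N,\svarM}\cap F_{N,\tvarM}$ pairs non-trivially only with $[ST(B(N))_{|\ast}]$, whose dual is detected by the boundary $p_N^{-1}(\fvarM)$. Independence of $\fvarM,\svarM,\tvarM$ then follows from a Stokes-type cobordism argument: given a path from $\fvarM$ to $\fvarM'$ in $S^2\setminus\{\svarM,\tvarM\}$, its preimage under $p_N$ cobords $p_N^{-1}(\fvarM)$ and $p_N^{-1}(\fvarM')$, and fills to a $5$-chain in $C_2(B(N))$ whose transverse intersection with $F_{N,\svarM}\cap F_{N,\tvarM}$ is a compact $1$-manifold whose boundary gives exactly the variation of the triple intersection.

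Then I would analyse the dependence on the trivialisation $\tau_N$. Given a second trivialisation $\tau'_N$ agreeing with $\tau_s$ outside $B_N$, one interpolates by a homotopy of trivialisations on $B(N)$; this produces a cobordism in $ST(B(N))$ between the two sections $s_{\tau_N}(\cdot;\fvarM)$ and $s_{\tau'_N}(\cdot;\fvarM)$, which extends to a $5$-chain whose boundary adjusts $F_{N,\fvarM}$. Running the variation formula and reducing by the parallelogram and IHX cancellations as in Subsection 2.5 of the text, the only surviving contributions come from faces where the whole configuration collapses; these contributions are local and universal (they take place entirely in $S^2$), and a direct anomaly computation identifies them with $(p_1(\tau'_N)-p_1(\tau_N))/4$. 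Hence $\langle F_{N,\fvarM},F_{N,\svarM},F_{N,\tvarM}\rangle /6-p_1(\tau_N)/24$ is a topological invariant of $N$.

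Finally, to identify the invariant with $\lambda(N)$, I would show it satisfies the defining surgery formula of the Casson–Walker invariant. Using the Lagrangian-preserving surgery machinery, one computes the variation of the configuration-space invariant under a rational surgery on a null-homologous knot $J\subset N$ in terms of $\Delta''_J(1)$ and the surgery coefficient, yielding exactly the Walker surgery formula; combined with a direct verification on $S^3$ (where $p_1(\tau_s)=0$ and the triple intersection vanishes on dimension grounds), this pins down the invariant as $\lambda$. The main obstacle will be the anomaly calculation in the previous paragraph: proving that the universal local contribution equals precisely $(p_1(\tau'_N)-p_1(\tau_N))/4$, with the correct sign and the factor $1/24$ after dividing by $6$, requires a careful normalisation argument identifying the integral of a chosen closed form over a $2$-sphere worth of vertical sections, and matching the resulting integer with Hirzebruch's signature–Pontrjagin normalisation used in the definition of $p_1$.
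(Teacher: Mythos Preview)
The paper does not actually prove this theorem: it is stated as a result of Kuperberg and Thurston \cite{kt} (for integral homology spheres), generalised in \cite[Section~6]{sumgen}. The only argument given in the paper concerns the existence of the $F_{N,\cvarM}$ and the well-definedness of the triple intersection, and that argument is much shorter than yours: one observes that $C_2(B(N))$ has the same rational homology as $(\RR^3)^2\setminus\mbox{diagonal}\simeq S^2$, hence $H_3(C_2(B(N));\QQ)=0$ (so $p_N^{-1}(\cvarM)$ bounds) and $H_4(C_2(B(N));\QQ)=0$ (so the triple intersection is independent of the bounding chains, because their boundaries are disjoint). Your stages 3 and 4 (dependence on $\tau_N$ and identification with $\lambda$) are a plausible outline of what is done in the cited references, but they are not carried out in this paper.

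Your stage~1 contains a genuine dimension error. The fibre $ST(B(N))_{|\ast}$ is a $2$-sphere, hence a $2$-cycle, whereas $p_N^{-1}(\fvarM)$ is $3$-dimensional (codimension~$2$ in the $5$-dimensional boundary $\partial C_2(B(N))$). So the sentence ``$p_N^{-1}(\fvarM)$ is homologous to a fibre $ST(B(N))_{|\ast}$'' cannot be right, and the claim that $H_3(C_2(B(N));\QQ)$ is generated by $[ST(B(N))_{|\ast}]$ is false for the same reason. The correct statement is simply that $H_3(C_2(B(N));\QQ)=0$, which you get immediately from the homotopy equivalence with $S^2$. Likewise, your stage~2 duality argument is unnecessary: since $H_4(C_2(B(N));\QQ)=0$, any two choices of $F_{N,\fvarM}$ differ by a boundary, and the boundaries of the three chains are pairwise disjoint (the three points $\fvarM,\svarM,\tvarM$ are distinct), so the triple intersection is unchanged.
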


It is easy to see that $\langle F_{N,\fvarM},F_{N,\svarM}, F_{N,\tvarM}\rangle_{C_2(B(N))}$ is a well-defined invariant of $(N,\tau_N)$.
Indeed,  $C_2(B(N))$ has the same rational homology as $\left((\RR^3)^2 \setminus \mbox{diagonal}\right)$ that is homotopy equivalent to $S^2$ via the map $(x,y) \mapsto \frac{y-x}{\parallel y-x\parallel}$.
Therefore, $C_2(B(N))$ has the same rational homology as $S^2$.
In particular, since $H_3(C_2(B(N));\QQ)=\{0\}$, the cycle $p_N^{-1}(\cvarM)$ bounds a rational chain in $C_2(B(N))$ and, since
$$H_4(C_2(B(N));\QQ)=\{0\},$$ $\langle F_{N,\fvarM},F_{N,\svarM}, F_{N,\tvarM}\rangle_{C_2(B(N))}$ only depends on the non-intersecting
boundaries of the $F_{N,\cvarM}$.

\bibliographystyle{amsplain}

\end{document}